\documentclass[a4paper,11pt]{article}

\setlength{\textwidth}{6.3in}
\setlength{\textheight}{8.7in}
\setlength{\topmargin}{0pt}
\setlength{\headsep}{0pt}
\setlength{\headheight}{0pt}
\setlength{\oddsidemargin}{0pt}
\setlength{\evensidemargin}{0pt}

\usepackage{amsmath, amssymb, amsthm}
\usepackage{hyperref}

\usepackage{enumerate}
\usepackage{bm}
\usepackage{graphicx}

\usepackage{pgf,tikz}
\usetikzlibrary{arrows}

\usepackage{color}

\usepackage{algorithm, algorithmic}

\newcommand{\mS}{\mathcal{S}}
\newcommand{\mP}{\mathcal{P}}
\newcommand{\mL}{\mathcal{L}}
\newcommand{\mI}{\mathrm{I}}
\newcommand{\mV}{\mathcal{V}}
\newcommand{\mN}{\mathcal{N}}
\newcommand{\mH}{\mathcal{H}}

\newcommand{\mG}{\mathcal{G}}
\newcommand{\mO}{\mathcal{O}}

\newcommand{\dist}{\mathrm{d}}

\newcommand{\HJ}{\mathsf{HJ}}

\newcommand{\m}[1]{\mathcal{#1}}

\mathchardef\mh="2D

\newtheorem{thm}{Theorem}[section]

\newtheorem{lem}[thm]{Lemma}
\newtheorem{cor}[thm]{Corollary}
\newtheorem{remark}[thm]{Remark}

\begin{document}

\title{Characterizations of the Suzuki tower near polygons}
\author{Anurag Bishnoi and Bart De Bruyn}
\maketitle

\begin{abstract}
In recent work, we constructed a new near octagon $\mathcal{G}$ from certain involutions of the finite simple group $G_2(4)$ and showed a correspondence between the Suzuki tower of finite simple groups, $L_3(2) < U_3(3) < J_2 < G_2(4) < Suz$, and the tower of near polygons, $\mathrm{H}(2,1) \subset \mathrm{H}(2)^D \subset \mathsf{HJ} \subset \mathcal{G}$. Here we characterize each of these near polygons (except for the first one) as the unique near polygon of the given order and diameter containing an isometrically embedded copy of the previous near polygon of the tower. In particular, our characterization of the Hall-Janko near octagon $\HJ$ is similar to an earlier characterization due to Cohen and Tits who proved that it is the unique regular near octagon with parameters $(2, 4; 0, 3)$, but instead of regularity we assume existence of an isometrically embedded dual split Cayley hexagon, $\mathrm{H}(2)^D$. We also give a complete classification of near hexagons of order $(2, 2)$ and use it to prove the uniqueness result for $\mathrm{H}(2)^D$.
\end{abstract}

\bigskip \noindent \textbf{Keywords:} near polygon, generalized polygon, Suzuki tower\\ 
\textbf{MSC2010:} 05E18, 51E12, 51E25

\section{Introduction} \label{sec1}

Near polygons, introduced by Shult and Yanushka \cite{Sh-Ya}, form an important class of point-line geometries. 
They are related to polar spaces \cite{Cameron}, distance-regular graphs \cite{BCN}, finite simple groups \cite{Wilson}, and they have a rich theory of their own \cite{bdb-book}. 
While dual polar spaces and generalized polygons (which can be seen as a special class of near polygons) are related to classical groups and exceptional groups of Lie type, there are also some near polygons corresponding to sporadic simple groups. 
One of the most famous examples is the Hall-Janko near octagon $\HJ$ (also known as the Cohen-Tits near octagon \cite{Br}) constructed by Cohen  \cite{Co} using the conjugacy class of $315$ central involutions of the Hall-Janko sporadic simple group $J_2$. 
The collinearity graph of $\HJ$ is a distance-regular graph with intersection array $\{10, 8, 8, 2; 1, 1, 4, 5\}$ which is uniquely determined by these intersection numbers, or equivalently, $\HJ$ is the unique regular near octagon with parameters $(2,4; 0, 3)$ (cf.~\cite[Thm.~3]{Co-Ti}). 
It is well known that $\HJ$ contains subgeometries isomorphic to the dual split Cayley hexagon of order $(2,2)$, henceforth denoted by $\mathrm{H}(2)^D$. 
One of our main results in this paper is an alternative characterization of $\HJ$ where instead of assuming regularity with parameters $(2, 4; 0, 3)$, we only assume the order $(2,4)$ and the existence of an isometrically embedded $\mathrm{H}(2)^D$ (see Theorem \ref{thm:HJ}). 

In \cite{ab-bdb:2} we constructed a new near octagon $\mG$ of order $(2, 10)$ which has its full automorphism group isomorphic to $G_2(4){:}2$ and proved that $\mG$ contains $\HJ$ as an isometrically embedded subgeometry. 
Our main construction in \cite{ab-bdb:2} was inspired by the construction of $\HJ$ given by Cohen \cite{Co} where the points of $\HJ$ are the $315$ central involutions of the group $J_2$, and all the lines are the triples $\{x, y, xy\}$ formed by taking two distinct commuting central involutions $x$ and $y$. 
We recall the construction of $\mG$ here:

\bigskip \noindent
\textit{Let $C$ be the conjugacy class $2A$ \footnote{see \url{http://brauer.maths.qmul.ac.uk/Atlas/v3/exc/G24/} for the notation and description of this class} consisting of  the $4095$ central involutions of the group $G = G_2(4){:}2$.
Then $\mG$ is the point-line geometry with point set $C$ whose lines are all $15015$ three element subsets $\{x, y, xy\}$ of $C$ where $x$ and $y$ are distinct commuting involutions in $C$ satisfying $[G : N_G(\langle x, y \rangle)] \in \{1365, 13650\}$.}

\bigskip The Suzuki tower (a name coined by Tits, as it has been mentioned in \cite{Leemans05})
is the sequence of five finite simple groups $L_3(2) < U_3(3) < J_2 < G_2(4) < Suz$ where each group (except the last one) is a maximal subgroup of the next group in the sequence. 
The five groups in the Suzuki tower correspond to five vertex-transitive graphs $\Gamma_0$, $\Gamma_1$, $\Gamma_2$, $\Gamma_3$, $\Gamma_4$ where the automorphism groups of the $\Gamma_i$'s are $L_3(2){:}2$, $U_3(3){:}2$, $J_2{:}2$, $G_2(4){:}2$ and $Suz{:}2$, respectively \cite{Suzuki}. 
Using the properties of $\mG$ that we have described in \cite{ab-bdb:2}, we gave a correspondence between the Suzuki tower and the tower of four near polygons $\mathrm{H}(2,1) \subset \mathrm{H}(2)^D \subset \HJ \subset \mG$, where $\mathrm{H}(2,1)$ is the unique generalized hexagon of order $(2,1)$ (which is the point-line dual of the incidence graph of the Fano plane). 
The first four simple groups in the Suzuki tower act as automorphism groups of the corresponding near polygons. 
Moreover, we were able to construct all of the graphs $\Gamma_i$'s using these near polygons. 
We note that there are various other geometries associated with the Suzuki tower that have been studied before, see for example \cite{Ne82}, \cite{S92} and \cite{Leemans05}. 

For all $i \in \{1, 2, 3, 4\}$ the graph $\Gamma_i$ is strongly regular and contains $\Gamma_{i-1}$ as a local graph. In fact, it has been proved in \cite{Pa} that for $i \in \{ 2, 3 \}$ the graph $\Gamma_i$ is the unique connected graph which is locally $\Gamma_{i-1}$, while there exist two connected graphs which are locally $\Gamma_3$, the graph $\Gamma_4$ and the graph $3\Gamma_4$ constructed by Soicher \cite{S93} which is a $3$-fold antipodal cover of the Suzuki graph $\Gamma_4$. In \cite{BFS93}, it was proved that there are up to isomorphism three connected graphs that are locally $\Gamma_0$, one of which is the $U_3(3)$-graph $\Gamma_1$. In the present paper, we give some characterizations of the geometries in the ``Suzuki tower of near polygons'' by proving the following three theorems.

\begin{thm}
\label{thm:H2}
The dual split Cayley hexagon of order $(2,2)$ is the unique near hexagon of order $(2,2)$ that contains the generalized hexagon $\mathrm{H}(2,1)$ as an isometrically embedded subgeometry. 
\end{thm}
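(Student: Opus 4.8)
The plan is to reduce the statement to a short case check by combining the classification of near hexagons of order $(2,2)$ with a rigidity analysis of the embedding. That $\mathrm{H}(2)^D$ does contain an isometrically embedded $\mathrm{H}(2,1)$ is part of the Suzuki‑tower correspondence of \cite{ab-bdb:2}; the content of the theorem is therefore uniqueness.

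First I would record the structure forced by the embedding. Let $\mathcal{H}\cong\mathrm{H}(2,1)$ be isometrically embedded in a near hexagon $\mathcal{S}$ of order $(2,2)$. Then $\mathcal{H}$ has $21$ points, $14$ lines, and every point of $\mathcal{H}$ lies on exactly two lines of $\mathcal{H}$; since it lies on three lines of $\mathcal{S}$, each point $x$ of $\mathcal{H}$ lies on a unique line $L_x$ of $\mathcal{S}$ that is not a line of $\mathcal{H}$. Using that near polygons have no triangles and that two collinear points of a near polygon lie on a unique common line, the isometry of the embedding forces $L_x\cap\mathcal{H}=\{x\}$; consequently the lines $L_x$ ($x\in\mathcal{H}$) are pairwise distinct and distinct from the lines of $\mathcal{H}$. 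Hence $\mathcal{S}$ has at least $14+21=35$ lines, and since $\mathcal{S}$ has order $(2,2)$ it also has at least $35$ points. A further analysis of how the new points on the lines $L_x$ may be shared — which, via the absence of triangles, is controlled by the quadrangles of $\mathcal{S}$ meeting $\mathcal{H}$, while $\mathcal{H}$ itself, being a generalized hexagon, contains no triangle, quadrangle or pentagon — constrains the local structure of $\mathcal{S}$ around $\mathcal{H}$ quite tightly.

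Next I would invoke the classification of near hexagons of order $(2,2)$. In that list the bound of $35$ points at once discards the small members, such as the $3\times 3\times 3$ grid $\mathbb{L}_3\times\mathbb{L}_3\times\mathbb{L}_3$; for each of the few remaining candidates one checks directly, using the invariants above (number of lines, the local graph, presence of quads, and the absence of short cycles imposed by $\mathrm{H}(2,1)$), that none of them except $\mathrm{H}(2)^D$ can contain an isometrically embedded generalized hexagon of order $(2,1)$. In particular this step must separate $\mathrm{H}(2)^D$ from the other generalized hexagon of order $(2,2)$, the split Cayley hexagon $\mathrm{H}(2)$, by showing that $\mathrm{H}(2,1)$ does not embed isometrically into the latter. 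Together with the existence statement this establishes that $\mathrm{H}(2)^D$ is the unique such near hexagon.

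The main obstacle is the classification of near hexagons of order $(2,2)$ itself: this is a genuine structural theorem whose proof requires a careful case analysis organised around the quads and the local graphs of such near hexagons, and it is here that essentially all of the work lies. Once it is available, the rigidity observations above reduce the theorem to inspecting a short explicit list.
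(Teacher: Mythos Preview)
Your proposal is correct and follows essentially the same route as the paper: reduce to the classification of near hexagons of order $(2,2)$ (which yields exactly $\mathrm{H}(2)$, $\mathrm{H}(2)^D$ and $\mathbb{L}_3\times\mathbb{L}_3\times\mathbb{L}_3$), then eliminate all but $\mathrm{H}(2)^D$ from the list. The only minor difference is in how $\mathbb{L}_3\times\mathbb{L}_3\times\mathbb{L}_3$ is ruled out: you use your $35$-line lower bound (against its $27$ lines), whereas the paper argues directly that in a $21$-point subgeometry of a $27$-point geometry some exterior point would have to be collinear with at least $7$ interior points; both are immediate, and for $\mathrm{H}(2)$ versus $\mathrm{H}(2)^D$ both accounts simply cite the known fact that only the latter contains $\mathrm{H}(2,1)$.
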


\begin{thm}
\label{thm:HJ}
The Hall-Janko near octagon is the unique near octagon of order $(2,4)$ that contains the dual split Cayley hexagon of order $(2,2)$ as an isometrically embedded subgeometry. 
\end{thm}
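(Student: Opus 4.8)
The plan is to prove that the collinearity graph of such a near octagon is distance-regular with intersection array $\{10,8,8,2;1,1,4,5\}$ --- equivalently, that it is a regular near octagon with parameters $(2,4;0,3)$ --- and then to invoke the Cohen--Tits uniqueness theorem for $\HJ$ recalled in the introduction; alternatively one could reconstruct the geometry directly once its structure relative to the embedded hexagon is understood. So let $\mathcal{O}$ be a near octagon of order $(2,4)$ that contains an isometrically embedded copy of $H:=\mathrm{H}(2)^D$; write $d$ for distance in the collinearity graph of $\mathcal{O}$ and put $d(x,H):=\min\{d(x,z):z\in H\}$. Since all lines of $\mathcal{O}$ and all lines of $H$ carry three points, every line of $\mathcal{O}$ meets $H$ in $0$, $1$ or $3$ points; hence through a point $y\in H$ there pass exactly two lines of $\mathcal{O}$ not contained in $H$, and each such ``external'' line carries exactly one point of $H$ and two points outside $H$. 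Since $\HJ$ itself contains isometrically embedded copies of $\mathrm{H}(2)^D$, only the uniqueness statement requires proof.

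The main tool is the theory of valuations of near polygons: for each point $x$ of $\mathcal{O}$ the map $f_x\colon z\mapsto d(x,z)-d(x,H)$, defined on the point set of $H$, is a valuation of $H$. I would first show that $d(x,H)\le 1$ for every point $x$, and that a point outside $H$ is collinear with a \emph{unique} point of $H$. Indeed, $d(x,H)\ge 2$ would make $f_x$ a valuation of $\mathrm{H}(2)^D$ of maximum value at most $2$ (at most $1$ when $d(x,H)=3$, while $d(x,H)=4$ cannot occur since $f_x$ would then be identically zero); and if a point $x$ outside $H$ were collinear with two points $y,z\in H$, then $d_H(y,z)=2$ (there are no triangles and $H$ is a full subgeometry), so $x$ and the middle point in $H$ of a $y$--$z$ geodesic would be two common neighbours, which further restricts $f_x$. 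To turn these observations into contradictions one needs the complete list of valuations of $\mathrm{H}(2)^D$: apart from the $63$ classical valuations there is only a short list of non-classical ones, and a counting argument over the five lines through $x$ eliminates the offending cases. Granting this, each of the $63\cdot 2=126$ external lines carries two points outside $H$, these $252$ points are pairwise distinct, and (because $d(x,H)\le 1$) every point outside $H$ occurs among them; hence $\mathcal{O}$ has exactly $63+252=315$ points.

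Once it is known that every $f_x$ is classical --- that is, $f_x(z)=d(y,z)$, where $y\in H$ is the unique point of $H$ nearest to $x$ --- I would determine the collinearity graph of $\mathcal{O}$ globally. From a point $y\in H$ one computes $|\Gamma_i(y)|$ by splitting geodesics according to whether they stay in $H$ or leave it through one of the two external lines available at each step; from a point $x\notin H$ one argues similarly using $f_x$ and the distribution of the $252$ external points over the external lines of $H$. The targets are that $\mathcal{O}$ has no quads ($t_2=0$), that the intersection numbers $b_i$ and $c_i$ do not depend on the ordered pair of points chosen, and that they equal the entries of $\{10,8,8,2;1,1,4,5\}$; the Cohen--Tits theorem then gives $\mathcal{O}\cong\HJ$.

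I expect the main obstacle to be twofold. First, one must actually classify the valuations of the dual split Cayley hexagon of order $(2,2)$; this is a finite but genuinely computational task and it underpins every step above. Second, establishing the ``deep'' intersection numbers $c_3=4$ and $c_4=5$ is delicate, because a typical pair of points of $\mathcal{O}$ at distance $3$ or $4$ need not lie in a common sub-hexagon isomorphic to $H$: such pairs have to be controlled either by producing sufficiently many further isometric copies of $\mathrm{H}(2)^D$ through prescribed point-pairs, or by a careful local analysis in the neighbourhood of a point outside $H$. Everything else should be routine near-polygon bookkeeping.
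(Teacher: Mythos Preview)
Your overall strategy---reduce to Cohen--Tits by establishing the intersection array $\{10,8,8,2;1,1,4,5\}$---is reasonable in principle, and several of your preliminary steps (each point is at distance at most $1$ from $H$; each external point has a unique neighbour in $H$; hence $|\mathcal{O}|=315$) match what the paper proves. But there is a concrete error at the heart of your argument: the claim that ``every $f_x$ is classical'' is simply false. In $\HJ$ itself, a point $x$ outside an embedded copy of $\mathrm{H}(2)^D$ induces a valuation whose value distribution is $[1,14,32,16]$, not the classical $[1,6,24,32]$; in the paper's notation this is a type~$B$ valuation. Concretely, there are $14$ points of $H$ at distance $2$ from $x$, of which only $6$ are neighbours in $H$ of the projection $\pi(x)$; the other $8$ reach $x$ via paths that leave $H$. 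So $f_x(z)=d_H(\pi(x),z)$ fails for these $8$ points, and your subsequent computation of $|\Gamma_i(x)|$ ``by splitting geodesics according to whether they stay in $H$'' collapses.

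This is also why the paper takes a different route. Rather than aiming for distance-regularity and Cohen--Tits, it classifies all valuations of $\mathrm{H}(2)^D$ (types $A$, $B$, $C$, $D$ in Table~\ref{tab1:HD2}), eliminates types $C$ and $D$ from $\mathcal{O}$ by line-counting against Table~\ref{tab2:HD2}, then---more subtly---shows there are no type~$A$ points \emph{outside} $H$ via a $W(2)$-quad argument (Lemma~\ref{lemHJ:W2} and the lemma following it). What remains is that the $252$ external points biject with the $252$ type~$B$ valuations, and $\mathcal{O}$ is identified directly with the explicit subgeometry $\mV_{A,B}$ of the valuation geometry; Cohen--Tits is never invoked. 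If you want to salvage your approach, you would need to replace ``$f_x$ is classical'' by ``$f_x$ is of type $B$'' and then extract $c_3=4$, $c_4=5$ from the type~$B$ value distribution together with the line-type data of Table~\ref{tab2:HD2}; this is doable but amounts to redoing most of the paper's analysis before appealing to an external uniqueness theorem that the paper's method renders unnecessary.
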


\begin{thm}
\label{thm:G24}
The $G_2(4)$ near octagon is the unique near octagon of order $(2,10)$ that contains the Hall-Janko near octagon as an isometrically embedded subgeometry. 
\end{thm}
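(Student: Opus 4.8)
The plan is to determine the position of an isometrically embedded copy $\mathcal{O}\cong\HJ$ inside a near octagon $\mathcal{S}$ of order $(2,10)$ and to show that this forces $\mathcal{S}\cong\mathcal{G}$, the near octagon $\mathcal{G}$ being one such configuration by the results of \cite{ab-bdb:2} recalled above. Since every point of $\mathcal{S}$ lies on $11$ lines and every point of $\mathcal{O}$ lies on $5$ lines of $\mathcal{O}$, through each point $x$ of $\mathcal{O}$ there pass exactly $6$ lines of $\mathcal{S}$ not contained in $\mathcal{O}$, so $x$ has $12$ neighbours outside $\mathcal{O}$. For $y\in\mathcal{S}\setminus\mathcal{O}$ put $X(y):=\{z\in\mathcal{O}:z\sim y\}$; since $\mathcal{S}$ has no triangles, $X(y)$ is a coclique of $\mathcal{O}$, and I would first show $|X(y)|\le 1$. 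Suppose $x_1,x_2\in X(y)$ are distinct. Then $x_1,x_2$ are at distance $2$ in $\mathcal{O}$, hence (isometric embedding) non-collinear in $\mathcal{S}$ with a common neighbour $z\in\mathcal{O}$; together with $y\notin\mathcal{O}$ this gives at least two common neighbours, so, all lines of $\mathcal{S}$ being thick, Yanushka's lemma places $x_1,x_2$ inside a quad $Q$ of $\mathcal{S}$. Now $Q\cap\mathcal{O}$ is a subspace of the generalized quadrangle $Q$, hence a point, a line, a proper full subquadrangle, or $Q$ itself; in the last two cases $\mathcal{O}$ would contain a quad of $\mathcal{S}$, contradicting the absence of quads in $\HJ$, and in the first two cases $Q\cap\mathcal{O}$ cannot contain the non-collinear pair $x_1,x_2$ --- a contradiction either way. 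A double count of the flags $(x,y)$ with $x\in\mathcal{O}$, $y\notin\mathcal{O}$, $y\sim x$ then gives $\sum_y|X(y)|=315\cdot 12$, whence $|X(y)|=1$ for every $y$ at distance $1$ from $\mathcal{O}$, and $\mathcal{O}$ has exactly $3780$ neighbours outside it.

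The remaining and hardest part of this first step is to show that $\mathcal{O}$ is \emph{big} in $\mathcal{S}$, i.e.\ that no point lies at distance $\ge 2$ from $\mathcal{O}$. The natural tool here is the theory of valuations of near polygons: every point $w$ of $\mathcal{S}$ induces a valuation $f_w$ of $\mathcal{O}$ via $f_w(z)=\dist(w,z)-\dist(w,\mathcal{O})$, and one must show that the valuations of $\HJ$ realizable in a near octagon of order $(2,10)$ are exactly those of ``depth $\le 1$'' carried by the $13$ points of a single fibre --- equivalently, that no valuation of $\HJ$ forcing a point at distance $2$ (an ``ovoidal''-type behaviour) is compatible with $\mathcal{S}$ having diameter $4$ and order $(2,10)$. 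Granting this, $\mathcal{S}$ has $315+3780=13\cdot 315=4095$ points, partitioned into fibres $\pi^{-1}(x)$ ($x\in\mathcal{O}$, where $\pi(y)$ denotes the unique element of $X(y)$) of size $13$; each fibre consists of $x$ and the $6$ external lines through it and carries no further collinearity, and $\mathcal{O}$ is geodetically closed in $\mathcal{S}$.

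The second step is to reconstruct the lines of $\mathcal{S}$ meeting more than one fibre. Each such line has its three points in three pairwise distinct fibres $\pi^{-1}(x_1),\pi^{-1}(x_2),\pi^{-1}(x_3)$, and geodesic closedness forces $\dist(x_i,x_j)\in\{1,2\}$. Applying the near-polygon axiom to the points of such a line against the lines of $\mathcal{O}$, and keeping track of how the six external lines through a point $x$ of $\mathcal{O}$ match up with the external lines through the points of $\mathcal{O}$ at distance $1$ and $2$ from $x$, one extracts a rigid combinatorial scheme governing the fibre pairings. I would then argue that, up to the action of $\operatorname{Aut}(\mathcal{O})\cong J_2{:}2$, there is a unique way to extend $\mathcal{O}$ to a near octagon of order $(2,10)$; as $\mathcal{G}$ is such an extension, $\mathcal{S}\cong\mathcal{G}$, and in particular $\operatorname{Aut}(\mathcal{S})\cong G_2(4){:}2$. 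This should parallel the proofs of Theorems~\ref{thm:H2} and~\ref{thm:HJ}, with the bigness claim of the first step as the principal obstacle, since a priori nothing is known about $|\mathcal{S}|$ and all the rigidity must come from the exceptional features of $\HJ$ --- its lack of quads, its distance-regular collinearity graph, and its isometrically embedded subhexagons $\mathrm{H}(2)^D$.
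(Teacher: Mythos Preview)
Your proposal is an outline rather than a proof, and the one step you attempt in detail contains an error. The claim that a subspace of a generalized quadrangle $Q$ must be a point, a line, a proper full subquadrangle, or $Q$ itself is false: the perp $z^\perp \cap Q$ of a point $z$ (the union of all lines of $Q$ through $z$) is a subspace that is none of these. In your situation one can indeed show $Q \cap \mathcal{O} \subseteq z^\perp$, where $z$ is the unique common neighbour of $x_1,x_2$ in $\mathcal{O}$: any $p \in Q \cap \mathcal{O}$ at distance $2$ from $z$ would project onto both lines $zx_1$ and $zx_2$, producing two common neighbours of $p$ and $z$ in $\mathcal{O}$, against $c_2 = 1$ in $\HJ$. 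But $Q \cap \mathcal{O}$ being a pencil of lines through $z$ yields no contradiction, so your quad argument does not establish $|X(y)| \leq 1$. (The classification you quote holds for \emph{convex} subspaces of $Q$, but $\mathcal{O}$ is not known to be convex in $\mathcal{S}$ --- that is essentially what you are trying to prove.) The paper obtains unique projection instead from the computed valuation geometry of $\HJ$ (Tables~\ref{tab1:HJ} and~\ref{tab2:HJ}): the only valuation types that can occur at distance $1$ from $\mathcal{O}$ are $B$ and $C$, and both have $|\mathcal{O}_f| = 1$.

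Your acknowledged ``hardest part'' --- the bigness of $\mathcal{O}$ --- is where almost all the work lies, and it is not settled by excluding deep valuations directly. Type~$E$ points are eliminated quickly (no $\mathcal{V}$-line joins type $E$ to type $B$ or $C$), but type~$D$ points are only ruled out at the very end of the paper's argument, after a chain of structural lemmas: the type~$C$ points together with the type~$ACC$ lines and the \emph{ordinary} type~$CCC$ lines form a cover of an explicit connected subgeometry of $\mathcal{V}$; the order $(2,10)$ forces this cover to be $1$-fold, so each valuation of type $B$ or $C$ is induced exactly once; one then shows that each type~$B$ point lies on one $ABB$ line and ten $BBC$ lines (paired into five $W(2)$-quads), which in turn accounts for all $11$ lines through each type~$C$ point --- leaving no room for $CDD$ lines and hence no type~$D$ points. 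Uniqueness of $\mathcal{S}$ then follows because its points and lines are in bijection with specified points and lines of the valuation geometry; your ``second step'' and the appeal to rigidity under $J_2{:}2$ are replaced by this intrinsic description, and no separate uniqueness argument is needed.
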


\begin{remark}
\normalfont
It is known that the second constituent of the $\HJ$-graph $\Gamma_2$ is isomorphic to the distance-$2$ graph of the generalized hexagon $\mathrm{H}(2)^D$. 
Similarly, the second constituent of the $G_2(4)$-graph $\Gamma_3$ is isomorphic to the distance-2 graph of the Hall-Janko near octagon $\HJ$. 
It has been communicated to us by one of the referees that a similar result holds for the near octagon $\m G$. 
Let $\Gamma = 3\Gamma_4$ be the triple cover of the Suzuki graph $\Gamma_4$ as constructed by Soicher \cite{S93}. Then $G = \mathrm{Aut}(\Gamma) \cong 3 \cdot Suz{:}2$ acts distance-transitively on the vertices and the vertices split into orbits of length $1, 416, 4095, 832, 2$ under the action of the stabilizer $G_x \cong G_2(4){:}2$ of a fixed vertex $x$ of $\Gamma$. Let $V$ be the set of $4095$ vertices at distance $2$ from $x$ in $\Gamma$. There exists a bijective correspondence between $V$ and the involution class $2A$ of $G_x$ (i.e., the points of the near octagon $\m G$), see Pasechnik \cite[Proposition 3]{Pa}. The collinearity graph of $\m G$, which uniquely determines the geometry $\m G$, can be constructed from $V$ by taking the edges as pairs $\{u, v\} \subset V$ whose stabilizer has index $4095$ or $40950$ in $G_x$. The subgraph of $\Gamma$ induced on $V$ is isomorphic to the graph defined on the point set of $\mathcal{G}$, by calling two vertices adjacent whenever they lie at distance 2 and have a unique common neighbour. These claims can be verified with the aid of the permutation representation of $3 \cdot Suz{:}2$ on the $5346$ vertices of $\Gamma$ given in the ATLAS \footnote{see \url{http://brauer.maths.qmul.ac.uk/Atlas/v3/permrep/3Suzd2G1-p5346B0}}.
\end{remark}

\begin{remark}
\normalfont
We do not know if there is a near polygon which corresponds to $Suz$, but certainly the involution geometry of $Suz$ studied in the literature \cite{Yoshiara88, Bardoe96} is not a near polygon; we can directly see from the suborbit diagram \cite[Fig. 1]{Bardoe96} that there are point-line pairs $(p, L)$ in this geometry where every point of the line $L$ is at the same distance $4$ from $p$. 
However, this involution geometry is a near $9$-gon in the sense of \cite[Sec. 6.4]{BCN}.
Indeed, it is clear from the suborbit diagram \cite[Fig. 1]{Bardoe96} that for every point-line pair $(p, L)$ with $\dist(p, L) < 4$, there is a unique point on $L$ nearest to $p$.
Similarly, one can verify that the involution geometry of the Conway group $Co_1$ \cite[Fig. 1]{Bardoe99} which contains $Suz$ is a near $11$-gon. 
The techniques involving valuations of near polygons used in this paper do not work for near $(2d+1)$-gons, and thus we are unable to give similar characterizations of these involution geometries. 
We do not know if there are more near polygons hiding in larger groups that can extend the Suzuki tower of near polygons (see \cite{S92} for a possible extension of the Suzuki tower of groups). 
\end{remark}

This paper is organised as follows. In Section \ref{sec:Prelim} we give the basic properties of near polygons and their valuations that we will use in our proofs.
In Section \ref{sec:H2} we first prove that up to isomorphism there are only three near hexagons of order $(2, 2)$, the two generalized hexagons of order $(2,2)$ (split Cayley hexagon and its dual) and the product near polygon $\mathbb{L}_3 \times \mathbb{L}_3 \times \mathbb{L}_3$.
Then we show that out of these three near hexagons only the dual split Cayley hexagon contains $\mathrm{H}(2,1)$ as a subgeometry, thus proving Theorem \ref{thm:H2}. 

Sections \ref{sec:HJ} and \ref{sec:G24} are then devoted to proving Theorems \ref{thm:HJ} and \ref{thm:G24}, respectively. 
The main tool that we use in these proofs is the theory of valuations of near polygons introduced by De Bruyn and Vandencasteele in \cite{bdb-pvdc}.
This is a purely combinatorial tool which has since been developed and used to obtain several classification results for near polygons (see \cite{bdb-val}  for a survey). 
In \cite{ab-bdb:1} we used the so-called valuation geometry of $\mathrm{H}(2)^D$, obtained with the help of a computer, to prove that it is not contained in any semi-finite near hexagon as a full isometrically embedded subgeometry, thus giving a partial answer to the famous open problem about existence of semi-finite generalized polygons in this particular case. 
Also, the $G_2(4)$ near octagon $\mG$ was first constructed using the valuation geometry of $\HJ$ (see \cite[Appendix]{ab-bdb:2}). 
The algorithm provided in \cite{ab-bdb:1} to compute the valuation geometry of $\mathrm{H}(2)^D$ can also be used to compute the valuation geometry of $\HJ$ and both these valuation geometries will be crucial to our proofs. 
We have made the computer code, written in GAP \cite{Gap}, required for these computations available online \cite{ab-bdb:code}. 
The valuation geometry of $\mathrm{H}(2)^D$ is constructed in the file \texttt{Suz1.g} while the valuation geometry of $\HJ$ is constructed in \texttt{Suz2.g}. 
This code is also used to prove Lemmas \ref{lemHJ:connected}, \ref{lem:ValBC}, \ref{lem:ValB}, \ref{lem:ValC} and  \ref{lemG:connected}. 

\section{Near polygons and their valuations}
\label{sec:Prelim}

In a partial linear space we can identify each line with the set of points incident with it, and then the incidence relation becomes  set inclusion. We would do so whenever it is convenient. A partial linear space is said to have {\em order} $(s,t)$ if every line is incident with precisely $s+1$ points and if every point is incident with precisely $t+1$ lines. All distances in a partial linear space $\mS=(\m P, \m L, \mI)$ will be measured in its collinearity graph, and denoted by $\dist_{\m S}(\cdot, \cdot)$ or by $\dist(\cdot,\cdot)$ when no confusion could arise. If $x$ is a point of $\mS$, then $\Gamma_i(x)$, for $i \in \mathbb{N}$, will denote the set of points of $\m S$ at distance $i$ from $x$.
Similarly, for a nonempty subset $X \subseteq \m P$, we define 
\[\Gamma_i(X) = \{x \in \m P \, | \,  i = \dist(x, X) := \min_{y \in X} \dist(x, y)\}.\]
Let $X$ be a set of points of $\m S$. Then $X$ is called a \textit{subspace} if for every pair of distinct collinear points in $X$, the line joining those points is contained in $X$.  A subspace $X$ is called \textit{convex}, if for every pair of points $x, y \in X$ all points on a shortest path between $x$ and $y$ are contained in $X$. 

A near $2d$-gon with $d \in \mathbb{N}$ is a partial linear space $\m N = (\m P, \m L, \mI)$ defined by the following axioms:
\begin{enumerate}[(NP1)]
\item the collinearity graph of $\m N$ is a connected graph of diameter $d$;
\item for every $x \in \m P$ and every line $L \in \m L$ there exists a unique point $\pi_L(x)$ incident with $L$ that is nearest to $x$.
\end{enumerate}

It follows that every near $4$-gon is a possibly degenerate generalized quadrangle (see \cite{Pa-Th}). In fact, generalized $2d$-gons with $d \geq 2$ are near $2d$-gons that satisfy the following extra property:
\begin{enumerate}[(GP)]
\item For every pair of points $x, y$ at distance $i \in \{1, 2, \dots, d-1\}$ from each other, the set $\{z \in \m P \, | \, \dist(x, z) = i - 1, \dist(z, y) = 1\}$ is a singleton.
\end{enumerate}
Two lines $L_1$ and $L_2$ of a near polygon are called \textit{parallel} at distance $i$ if $\dist(L_1, L_2) = i$ and for each point $x_1$ on $L_1$, there is a unique point $x_2$ on $L_2$ such that $\dist(x_1, x_2) = i$, or equivalently, if $\dist(L_1, L_2) = i$ and for each point $x_2$ on $L_2$, there is a unique point $x_1$ on $L_1$ such that $\dist(x_1, x_2) = i$.

Let $\m N = (\m P, \m L, \mI)$ be a subgeometry of another near polygon $\m N' = (\m P', \m L', \mI')$, i.e., $\m P \subseteq \m P'$, $\m L \subseteq \m L'$ and $\mI = \mI' \cap (\m P \times \m L)$. 
Then $\mN$ is called a \textit{full} subgeometry of $\mN'$ if for every line $L \in \m L$ we have $\{x \in \m P :  x ~\mI~ L\} = \{x \in \m P' : x ~\mI'~ L \}$ and it is called \textit{isometrically embedded} if for all $x, y \in \m P$ we have $\dist_{\m N}(x, y) = \dist_{\m N'}(x, y)$. 

For every nonempty convex subspace $X$ of a near polygon $\m N = (\m P, \m L, \mI)$, we can define a full isometrically embedded subgeometry of $\m N$ by taking the elements of $X$ as the points of the subgeometry and the lines $L \in \m L$ satisfying $\{x \in X : x ~\mI~  L\} = \{x \in \m P: x ~\mI~ L\}$ as the lines of the subgeometry, with the incidence relation being the one induced by $\mI$. 

The most important class of subgeometries of near polygons are the \textit{quads}. They were introduced by Shult and Yanushka in \cite{Sh-Ya}, and the theory of near polygons with quads was further developed by Brouwer and Wilbrink in \cite{BW}. 
A quad $Q$ of a near polygon $\m N$ is a  set of points that satisfies the following properties. 
\begin{enumerate}[(Q1)]
\item The maximum distance between two points of $Q$ is $2$. 

\item If $x, y \in Q$ are distinct and collinear, then every point incident with the line $xy$ lies in $Q$. 

\item If $x$ and $y$ are two non-collinear points in $Q$, then every common neighbor of $x$ and $y$ is in $Q$. 

\item The subgeometry of $\m N$ determined by those points and lines that are contained in $Q$ is a non-degenerate generalized quadrangle. 
\end{enumerate}
Or succinctly, a quad $Q$ is a convex subspace of $\m N$ that induces a subgeometry isomorphic to a non-degenerate generalized quadrangle. 
Sufficient conditions for existence of quads were given by Shult and Yanushka in \cite[Proposition 2.5]{Sh-Ya}, where they proved that if $a$ and $b$  are two points of a near polygon at distance $2$ from each other, and if $c$ and $d$ are two common neighbors of $a$ and $b$ such that at least one of the lines $ac, ad, bc, bd$ contains at least three points, then $a$ and $b$ are contained in a unique quad.
We will implicitly use this result in our proofs. 

In a near polygon with three points per line, each quad induces a generalized quadrangle of order $(2,t)$, and each such generalized quadrangle is isomorphic to either the $(3 \times 3)$-grid, $W(2)$ or $Q(5, 2)$  (see eg. Section 1.10 in \cite{bdb-book}). We will call the quad a {\em grid-quad}, a {\em $W(2)$-quad} or a {\em $Q(5,2)$-quad} depending on which case occurs. The $(3 \times 3)$-grid is an example of a so-called product near polygon (see Section 1.6 in \cite{bdb-book} for the precise definition) as it can be obtained by taking the direct product $\mathbb{L}_3 \times \mathbb{L}_3$, where $\mathbb{L}_3$ is a line with three points. We will see another example of a product near polygon, $\mathbb{L}_3 \times \mathbb{L}_3 \times \mathbb{L}_3$, in Section \ref{sec:H2}, which is a near hexagon of order $(2, 2)$. 

Let $\mN = (\mP, \mL, \mI)$ be a near $2d$-gon.  A function $f: \mP \rightarrow \mathbb{Z}$ is called a \textit{semi-valuation} of $\mN$ if every line $L$ contains unique point $x_L$ such that $f(x)=f(x_L)+1$ for every point $x$ of $L$ distinct from $x_L$. A {\em valuation} of $\mN$ is  a semi-valuation $f$ for which $\min_{x \in \mP}f(x) = 0$.

Let $f$ be a valuation of $\mN$. Then $M_f$ denotes the maximum value attained by $f$ and $\mathcal{O}_f$ denotes  the set of points with $f$-value $0$. 
It can be easily checked that the set of points of $\mN$ that have $f$-value strictly less than $M_f$ is a {\em hyperplane} $H_f$ of $\mN$, i.e., a proper subset of $\m P$ having the property that each line has either one or all its points in it. 
From the near polygon axioms it also follows that the function $f : \mP \rightarrow \mathbb{Z}$ defined by $f(y) = \dist(x, y)$, where $x$ is a fixed point of $\mN$, is a valuation of $\m N$. 
This is known as the \textit{classical valuation} of $\mN$ with center $x$. 
Two valuations $f_1$ and $f_2$ of $\mN$ are called {\em isomorphic} if there exists an automorphism $\theta$ of $\mN$ such that $f_2 = f_1 \circ \theta$. 
Thus, all classical valuations of $\mN$ are isomorphic if $\mathrm{Aut}(\mN)$ acts transitively on the points of $\mN$, which is true for all the Suzuki tower near polygons. 

Two valuations $f_1$ and $f_2$ of $\mN$ are called \textit{neighboring valuations} if there exists an $\epsilon \in \mathbb{Z}$  such that $|f_1(x) - f_2(x) + \epsilon| \leq 1$ for every point $x$ of $\mathcal{N}$. The number $\epsilon$ (necessarily belonging to $\{ -1,0,1 \}$) is uniquely determined, except when $f_1=f_2$, in which case there are three possible values for $\epsilon$, namely $-1$, $0$ and $1$. 
Suppose $\mN$ is  a near polygon in which every line has precisely three points. 
Let $f_1, f_2$ be two neighboring valuations and let $\epsilon \in \{-1, 0, 1\}$ be such that $|f_1(x) - f_2(x) + \epsilon| \leq 1$ for all points $x$. 
For points $x$ satisfying $f_1(x) = f_2(x) - \epsilon$ we define $f_3'(x) = f_1(x) - 1 = f_2(x)  - \epsilon - 1$ and for other points $x$ we define $f_3'(x) = \max \{f_1(x), f_2(x) - \epsilon\}$. 
Let $m$ be the minimal value attained by $f_3'$. 
Then it is easily seen that $f_3'$ is a semi-valuation and thus the function defined by $f_3(x) =  f_3'(x) - m$ is a valuation of $\m N$, which we denote by $ f_1 \ast f_2$. If $f_1=f_2$, then we have $f_3=f_1=f_2$, regardless of the value of $\epsilon$. For neighboring valuations $f_1, f_2$ of $\m N$ and $f_3 = f_1 \ast f_2$ we have
(i) $f_2 \ast f_1 = f_1 \ast f_2 = f_3$; (ii) $f_1$ and $f_3$ are neighboring valuations and $f_1 \ast f_3 = f_2$; (iii) $f_2$ and $f_3$ are neighboring valuations and $f_2 \ast f_3 = f_1$. 
For more on the basic theory of valuations we refer to \cite[Section 2]{ab-bdb:1} and \cite{bdb:Gn}. 
The following result establishes the main connection between valuations and isometric embeddings of near polygons. 
\begin{lem}
\label{lem:linetypes}
\label{lem:embeddings}
Let $\mN = (\mP, \mL, \mI)$ be a near polygon which is an isometrically embedded full subgeometry of a near polygon $\mN' = (\mP', \mL', \mI')$. 
Then the following holds: 
\begin{enumerate}
\item[$(1)$] For every point $x$ in $\mP'$ the function $f_x: \mP \rightarrow \mathbb{N}$ defined by $f_x(y) := d(x,y) - d(x, \mP)$ is a valuation of $\mN$.
\item[$(2)$] For every pair of distinct collinear points $x_1$ and $x_2$ in $\mN'$, the valuations $f_{x_1}$ and $f_{x_2}$ are neighboring.
\item[$(3)$] Say every line of $\mN'$ is incident with three points and let $\{ x_1,x_2,x_3 \}$ be a line of $\mN'$. Then $f_{x_1} \ast f_{x_2} = f_{x_3}$. In particular, if two of $f_{x_1},f_{x_2},f_{x_3}$ coincide then they are all equal.
\end{enumerate}
\end{lem}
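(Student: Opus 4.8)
The plan is to derive all three parts from two elementary facts about near polygons: (a) for a point $x$ and a line $L$, if $\pi_L(x)$ is the point of $L$ nearest to $x$ (unique by (NP2)) and $i = \dist(x,\pi_L(x))$, then every other point of $L$ lies at distance exactly $i+1$ from $x$ --- this is a one-line consequence of the triangle inequality together with the uniqueness in (NP2); and (b) if $x_1$ and $x_2$ are collinear points then $|\dist(x_1,z)-\dist(x_2,z)|\le 1$ for every point $z$.

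For part (1), fix $x \in \mP'$. Because $\mN$ is a \emph{full} subgeometry, every line $L$ of $\mN$ is also a line of $\mN'$ carrying the same points, so fact (a) applied in $\mN'$ shows that $L$ has a unique point $\pi_L(x)$ at distance $i$ from $x$ and all its remaining points at distance $i+1$. Subtracting the constant $\dist(x,\mP) = \min_{z\in\mP}\dist(x,z)$ gives $f_x(\pi_L(x)) = i - \dist(x,\mP)$ and $f_x(y) = f_x(\pi_L(x)) + 1$ for the other points $y$ of $L$, so $x_L := \pi_L(x)$ is the required unique point. Since the embedding is isometric these values agree with those computed from $\mN$-distances, and $f_x \ge 0$ with $\min_{y\in\mP} f_x(y) = 0$ directly from the definition of $\dist(x,\mP)$. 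Hence $f_x$ is a valuation of $\mN$.

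For part (2), write $c_i := \dist(x_i,\mP)$; fact (b), used at the points of $\mP$, gives both $|\dist(x_1,y)-\dist(x_2,y)|\le 1$ for all $y\in\mP$ and $|c_1-c_2|\le 1$. Taking $\epsilon := c_1 - c_2 \in \{-1,0,1\}$ one computes $f_{x_1}(y) - f_{x_2}(y) + \epsilon = \dist(x_1,y) - \dist(x_2,y)$ for every $y \in \mP$, whose absolute value is at most $1$, so $f_{x_1}$ and $f_{x_2}$ are neighboring. For part (3), with $\{x_1,x_2,x_3\}$ a line of $\mN'$, and $c_i$, $\epsilon = c_1 - c_2$ as above, I will show that the semi-valuation $f_3'$ occurring in the definition of $f_{x_1}\ast f_{x_2}$ satisfies $f_3'(y) = f_{x_3}(y) + (c_3 - c_1)$ for every $y\in\mP$. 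Fixing $y$, I split into the three cases according to which of $x_1,x_2,x_3$ is the point of the line nearest to $y$ in $\mN'$; by fact (a) the three distances $\dist(x_j,y)$ are then $k,k+1,k+1$ in a known pattern, the identity $f_{x_1}(y)-f_{x_2}(y)+\epsilon = \dist(x_1,y)-\dist(x_2,y)$ determines which clause of the definition of $f_3'$ applies, and a short computation yields $f_3'(y) = \dist(x_3,y) - c_1$ in each case. Since $f_{x_3}$ is a valuation its minimum value is $0$, so $\min_y f_3'(y) = c_3 - c_1$ and the normalization step returns exactly $f_{x_3}$; hence $f_{x_1}\ast f_{x_2} = f_{x_3}$. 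The final assertion follows because commutativity of $\ast$ together with part (3) applied to any relabeling of the line gives $f_{x_i}\ast f_{x_j} = f_{x_k}$ for every permutation $(i,j,k)$ of $(1,2,3)$, so if two of the $f_{x_i}$ coincide, say $f_{x_i} = f_{x_j}$, then $f_{x_k} = f_{x_i}\ast f_{x_j} = f_{x_i}$ by the stated fact that $f\ast f = f$.

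All the steps are short; the one place needing genuine care is the bookkeeping in part (3) --- keeping the sign of $\epsilon$ and the non-symmetric-looking roles of $f_{x_1}$ and $f_{x_2}$ in the definition of $f_3'$ straight, and checking that the three cases are exhaustive and all produce the same constant $c_3 - c_1$. Everything else reduces to the triangle inequality and axiom (NP2).
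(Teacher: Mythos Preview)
Your proof is correct. The paper itself does not prove this lemma; it simply cites \cite[Lemma~2.2]{ab-bdb:1}, so your self-contained argument is a genuine addition rather than a different route to the same destination. The approach you take---reducing everything to (NP2) and the triangle inequality, then doing the three-way case split on which of $x_1,x_2,x_3$ is the unique nearest point to $y$---is the natural one and matches what one finds in the cited reference.

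Two very small remarks. First, the sentence ``Since the embedding is isometric these values agree with those computed from $\mN$-distances'' in part~(1) is not needed: all distances in the definition of $f_x$ are $\mN'$-distances (since $x$ need not lie in $\mP$), and the semi-valuation property is verified on lines of $\mN$ viewed as lines of $\mN'$ via fullness. Isometry only enters implicitly to make the case $x\in\mP$ consistent with the classical valuation of $\mN$, which you do not actually use here. Second, in part~(3) you might make explicit once that the identity $f_{x_1}(y)-f_{x_2}(y)+\epsilon=\dist(x_1,y)-\dist(x_2,y)$ means the clause ``$f_1(y)=f_2(y)-\epsilon$'' in the definition of $f_3'$ is precisely the case $\dist(x_1,y)=\dist(x_2,y)$, i.e.\ the case where $x_3$ is the nearest point; this makes the case split transparent. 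Neither remark affects correctness.
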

\begin{proof}
See \cite[Lemma 2.2]{ab-bdb:1}. 
\end{proof}

\begin{lem} \label{new}
Let $\mN = (\mP, \mL, \mI)$ be a near polygon which is an isometrically embedded full subgeometry of a near $2d$-gon $\mN' = (\mP', \mL', \mI')$, and for every point $x$ of $\mN'$, let $f_x$ be the valuation of $\mN$ as defined in Lemma \ref{lem:linetypes}. Then:
\begin{enumerate}
\item[$(1)$] If $x$ is a point of $\mN'$ such that $\dist(x,\mP)=i$, then $M_{f_x} \leq d - i$. 
\item[$(2)$] If $x$ is a point at distance $1$ from $\mN$ such that $|\mO_{f_x}| = 1$, then there is a unique point $\pi_{\mN}(x)$ in $\mN$ collinear with $x$.
\end{enumerate}
\end{lem}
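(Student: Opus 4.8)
The plan is to prove both parts by directly unwinding the definition of the valuation $f_x$; neither part needs anything beyond this definition together with the fact that the collinearity graph of $\mN'$ has diameter $d$.

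For $(1)$, I would set $i = \dist(x,\mP)$ and recall that $f_x(y) = \dist_{\mN'}(x,y) - i$ for every point $y$ of $\mN$, so that $M_{f_x} = \bigl(\max_{y \in \mP}\dist_{\mN'}(x,y)\bigr) - i$. Since $\mN'$ is a near $2d$-gon, every distance in its collinearity graph is at most $d$; in particular $\dist_{\mN'}(x,y) \le d$ for all $y \in \mP$, and hence $M_{f_x} \le d - i$. (One may note in passing that $\min_{y \in \mP}\dist_{\mN'}(x,y) = \dist(x,\mP) = i$, so $f_x$ attains the value $0$ and $\mO_{f_x}$ is nonempty; this will be used in $(2)$.)

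For $(2)$, now $i = \dist(x,\mP) = 1$, and I would observe that then $f_x(y) = \dist_{\mN'}(x,y) - 1$ for every $y \in \mP$, so that
\[
\mO_{f_x} \;=\; \{\, y \in \mP : f_x(y) = 0 \,\} \;=\; \{\, y \in \mP : \dist_{\mN'}(x,y) = 1 \,\}
\]
is exactly the set of points of $\mN$ collinear with $x$ in $\mN'$ --- note that collinearity with $x$ can only occur inside $\mN'$, since $\dist(x,\mP) = 1$ forces $x \notin \mP$. By the remark in $(1)$ this set is nonempty, so the hypothesis $|\mO_{f_x}| = 1$ is precisely the assertion that there is a unique point of $\mN$ collinear with $x$; I would denote it by $\pi_{\mN}(x)$ and note that, lying at distance $1 = \dist(x,\mP)$ from $x$, it is also the unique point of $\mN$ nearest to $x$, which justifies the notation.

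I do not expect any real obstacle here: the entire content is the observation that in the definition of $f_x$ the diameter of $\mN'$ caps the distances appearing in $(1)$, while in $(2)$ the shift by $\dist(x,\mP) = 1$ turns the zero-set $\mO_{f_x}$ into the set of neighbours of $x$ inside $\mN$. The only thing requiring care is keeping track of which geometry each distance is measured in. If desired, one can append the (unneeded) refinement that, because $\mN$ is a full isometrically embedded subgeometry of $\mN'$, any line of $\mN'$ through $x$ contains at most one point of $\mP$ --- two such points would be collinear in $\mN'$, hence, by the isometric embedding, collinear in $\mN$, forcing their joining line and thus $x$ into $\mP$ --- so $\pi_{\mN}(x)$ in fact determines a unique line of $\mN'$ joining $x$ to $\mN$.
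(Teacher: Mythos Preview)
Your proof is correct and is precisely the unwinding the paper has in mind: the paper's own proof consists of the single sentence ``This immediately follows from the definition of the map $f_x$,'' and your argument spells out exactly that.
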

\begin{proof}
This immediately follows from the definition of the map $f_x$.
\end{proof}

Let $\m N$ be a near polygon that has three points on each line and let $V$ be the set of valuations of $\m N$. 
The \textit{valuation geometry} of $\m N$ is the partial linear space $\m V$ defined by taking the set $V$ as points and the triples $\{f_1, f_2, f_3\}$ of pairwise distinct and neighboring valuations that satisfy $f_1 \ast f_2 = f_3$ as lines. 
We observe that $\mathrm{Aut}(\m N)$ acts on the valuation geometry $\m V$ by the map $(f, \theta) \in V \times \mathrm{Aut}(\m N) \mapsto f \circ \theta^{-1}$ (and thus $f(x) =  (f \circ \theta^{-1}) (\theta(x))$, $\forall x \in \m P ~ \forall f \in V ~ \forall \theta \in \mathrm{Aut}(\m N)$). 
When computing valuations we will only record the information about different orbits under this action by giving each orbit a different label (see eg. Table \ref{tab1:HD2}) and noting the essential properties of the valuations in that orbit. 
Similarly, the lines of $\m V$ will be given a type, which is just a sorted tuple of the type of points on that line, and we record the information about the number of $\mV$-lines of a given type incident to a fixed valuation of a given type in a separate table (see eg. Table \ref{tab2:HD2}). 
Say $\m N$ is a full isometrically embedded subgeometry of another near polygon $\m N'$. 
Then by Lemma \ref{lem:embeddings} the points and lines of $\m N'$ induce points and lines of the valuation geometry $\m V$ of $\m N$. 
We define the type of a point or line in $\m N'$ to be the type of the corresponding point or line of $\m V$. 
Note that the points/lines of $\m N'$ of the same type are not necessarily isomorphic under the action of $\mathrm{Aut}(\m N')$. 
Lines of the valuation geometry $\mV$ of a fixed near polygon $\mN$ will be referred to as $\mV$-lines and the valuation of $\m N$ induced by a point $x$ of $\m N'$ (see Lemma \ref{lem:embeddings}) will be denoted by $f_x$. 

\section{Near hexagons of order $(2,2)$}
\label{sec:H2}

In this section we classify all near hexagons $\mN$ of order $(2, 2)$.  
We know that if every pair of points in $\mN$ at distance $2$ from each other have a unique common neighbor, then $\mN$ is a generalized hexagon. 
In that case we can use the result of Cohen and Tits \cite{Co-Ti} to say that $\mN$ is either isomorphic to the split Cayley hexagon $\mathrm{H}(2)$ or its dual $\mathrm{H}(2)^D$. 
Similarly, if every pair of points in $\m N$ at distance $2$ have more than one common neighbor then we can use \cite[Theorem 1.1]{BCHW} to conclude that $\m N$ must be isomorphic to $\mathbb{L}_3 \times \mathbb{L}_3 \times \mathbb{L}_3$ (number  (xi) in their classification). 
We will prove that these are the only possible cases. 
First we need a basic result on near polygons with an order. 

\begin{lem}[{\cite[Theorem 1.2]{bdb-book}}]
\label{lem:count}
Let $\m N = (\m P, \m L, I)$ be a finite near $2d$-gon, $d \geq 1$, of order $(s,t)$ and let $x$ be a point of $\m N$. 
Then 
\[\sum_{y \in \m P} \left( \frac{-1}{s}\right)^{\mathrm{d}(x,y)} = 0.\]
\end{lem}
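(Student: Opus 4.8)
The plan is a double count of flags. For $0 \le i \le d$ write $k_i := |\Gamma_i(x)|$, so that $\sum_{y\in\m P}\left(\frac{-1}{s}\right)^{\mathrm{d}(x,y)} = \sum_{i=0}^{d} \left(\frac{-1}{s}\right)^{i} k_i$, and it suffices to show the latter vanishes. For each $i$ I would count in two ways the number $N_i$ of pairs $(z,L)$ with $z\in\Gamma_i(x)$ a point and $L$ a line through $z$. Since $\m N$ has order $(s,t)$, every point lies on exactly $t+1$ lines, so $N_i = (t+1)k_i$.

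On the other hand, fix a line $L$ and let $\pi_L(x)$ be its unique point nearest to $x$, whose existence is guaranteed by (NP2); put $j := \mathrm{d}(x,\pi_L(x))$. Any two points of $L$ are collinear, hence at distance at most $1$, so every point of $L$ lies at distance $j$ or $j+1$ from $x$; by uniqueness of $\pi_L(x)$ exactly one point of $L$ (namely $\pi_L(x)$ itself) is at distance $j$ and the remaining $s$ points are at distance $j+1$. Consequently, if $e_j$ denotes the number of lines $L$ with $\mathrm{d}(x,\pi_L(x))=j$, then a line contributes $1$ to $N_i$ when $j=i$, contributes $s$ to $N_i$ when $j=i-1$, and contributes $0$ otherwise. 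Setting the boundary conventions $e_{-1}:=0$ and $e_{d}:=0$ (the latter holds because a line with $\pi_L(x)\in\Gamma_d(x)$ would contain points at distance $d+1$, contradicting (NP1)), I obtain the recursion $(t+1)k_i = e_i + s\,e_{i-1}$, valid for all $0\le i\le d$.

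To finish, multiply the $i$-th instance of this recursion by $\left(\frac{-1}{s}\right)^{i}$ and sum over $i=0,\dots,d$. The contribution of the terms $s\,e_{i-1}$ is $\sum_{i=0}^{d}\left(\frac{-1}{s}\right)^{i} s\,e_{i-1} = -\sum_{i=0}^{d}\left(\frac{-1}{s}\right)^{i-1} e_{i-1} = -\sum_{j=-1}^{d-1}\left(\frac{-1}{s}\right)^{j} e_{j}$, which, using $e_{-1}=0$ and $e_d=0$, cancels exactly against $\sum_{i=0}^{d}\left(\frac{-1}{s}\right)^{i} e_i$. Hence $(t+1)\sum_{i=0}^{d}\left(\frac{-1}{s}\right)^{i}k_i = 0$, and since $t+1\ne 0$ this yields the asserted identity.

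I do not anticipate any real obstacle. The only points needing care are the justification that each line has exactly one point at minimal distance to $x$ together with $s$ points one step farther (which is purely a consequence of (NP2) and of collinear points being at distance at most $1$), and the bookkeeping of the boundary values $e_{-1}=e_d=0$ that makes the telescoping exact.
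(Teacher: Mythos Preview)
Your proof is correct and is essentially the same double count of point--line flags as in the paper. The paper organizes it more compactly by first noting that for each line $L$ the sum $\sum_{y\,\mI\,L}\left(\frac{-1}{s}\right)^{\mathrm{d}(x,y)}$ vanishes (one term $\left(\frac{-1}{s}\right)^{j}$ plus $s$ terms $\left(\frac{-1}{s}\right)^{j+1}$), and then simply swapping the order of summation over flags; your recursion $(t+1)k_i=e_i+s\,e_{i-1}$ and the subsequent telescoping amount to the same computation unpacked.
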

\begin{proof}
By (NP2), for every line $L$ the sum $\sum_{y \mI L} (\frac{-1}{s})^{\dist(x,y)}$ is $0$. 
Therefore, we have 
\[0 = \sum_{L \in \m L} \sum_{y \mI L} \left(\frac{-1}{s}\right)^{\dist(x,y)} =  \sum_{y \in \m P} \sum_{L \mI y} \left(\frac{-1}{s}\right)^{\dist(x,y)} = (t+1) \sum_{y \in \m P} \left(\frac{-1}{s}\right)^{\dist(x,y)} .\]
\end{proof}

\begin{lem}
\label{lem:quad_order}
Let $\m N$ be a finite near hexagon of order $(s, t)$ and $Q$ a quad of $\m N$ that has order $(s, t')$. 
Then $t' < t$. 
\end{lem}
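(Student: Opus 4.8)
The plan is to get the inequality $t'\le t$ from a trivial local count and then exclude equality using connectedness and a comparison of diameters.

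First I would unwind what "$Q$ has order $(s,t')$" means concretely. By definition, $Q$ is a convex subspace, so the lines of the subgeometry it induces are exactly the lines of $\mathcal{N}$ that join two points of $Q$; moreover, since $Q$ is a subspace, each such line lies entirely in $Q$. Hence for a point $x\in Q$, the lines of the induced subgeometry through $x$ are precisely the lines of $\mathcal{N}$ through $x$ all of whose points belong to $Q$, and there are exactly $t'+1$ of them. Since $x$ is also a point of $\mathcal{N}$, it lies on exactly $t+1$ lines of $\mathcal{N}$ in total, so $t'+1\le t+1$, i.e.\ $t'\le t$.

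Next I would rule out $t'=t$. If $t'=t$, then for every point $x\in Q$ the $t+1$ lines through $x$ that lie inside $Q$ already account for all $t+1$ lines of $\mathcal{N}$ through $x$; in particular every point of $\mathcal{N}$ collinear with $x$ lies in $Q$. Propagating this along paths in the collinearity graph of $\mathcal{N}$, which is connected by (NP1), I would conclude that $Q$ contains every point of $\mathcal{N}$, and therefore that every line of $\mathcal{N}$ is a line of the subgeometry induced by $Q$.

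Finally I would extract the contradiction from the diameters: under the assumption $t'=t$, the collinearity graph of $\mathcal{N}$ coincides with that of the subgeometry induced by $Q$, which, being a non-degenerate generalized quadrangle, has diameter $2$. But $\mathcal{N}$ is a near hexagon, hence has diameter $3$ by (NP1), a contradiction. Therefore $t'<t$. I do not anticipate any genuine obstacle in this argument; the only point requiring a little care is the identification, in the first paragraph, of the lines counted by the order of $Q$ with the lines of $\mathcal{N}$ through a point of $Q$ that stay inside $Q$, which is exactly where the convex-subspace definition from Section~\ref{sec:Prelim} is used.
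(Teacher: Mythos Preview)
Your argument is correct and follows essentially the same route as the paper: obtain $t'\le t$ from the local line count, then show $t'=t$ forces $Q=\mathcal{N}$ by connectedness, contradicting the diameter. The paper is terser (it states ``We know that $t'\le t$'' and ends with ``$\mathcal{N}=Q$, which is a contradiction''), while you spell out both the local count and the diameter comparison explicitly.
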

\begin{proof}
We know that $t' \leq t$. 
For the sake of contradiction, assume that $t' = t$. 
Let $x$ be a point of $Q$. 
Since all lines of $\m N$ through $x$ are already contained in $Q$, $x$ cannot be collinear with any point that is not contained in $Q$. 
But then, there cannot be any points of $\m N$ that lie outside $Q$, as the collinearity graph of $\m N$ is connected. 
Thus $\m N = Q$, which is a contradiction. 
\end{proof}

\begin{lem}
Let $\m N$ be a near hexagon of order $(2, 2)$. Then the number of common neighbors of a pair of points at distance $2$ from each other is a constant $c \in \{1, 2\}$. 
\end{lem}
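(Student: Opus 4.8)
The plan is to combine the counting identity of Lemma~\ref{lem:count} with the Shult--Yanushka quad-existence criterion and a small divisibility argument. Fix a point $x$ of $\mN$ and write $k_i := |\G_i(x)|$. Since $\mN$ has an order and finite diameter it is finite; clearly $k_0 = 1$ and $k_1 = 6$, and Lemma~\ref{lem:count} with $s = 2$ gives $1 - 3 + \tfrac14 k_2 - \tfrac18 k_3 = 0$, hence $k_3 = 2k_2 - 16$ and $v := |\mP| = 7 + k_2 + k_3 = 3k_2 - 9$. In particular $k_2 = (v+9)/3$ does not depend on $x$, so $k_0,k_1,k_2,k_3$ are constants of $\mN$.

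Next I would pin down the possible values of the common-neighbour number. For $x,y$ at distance $2$ let $\mu(x,y) := |\G_1(x) \cap \G_1(y)|$; it is $\geq 1$ since $\dist(x,y) = 2$. If $\mu(x,y) \geq 2$ then, all lines having three points, the Shult--Yanushka criterion places $x$ and $y$ in a unique quad $Q$, which by Lemma~\ref{lem:quad_order} has order $(2,t')$ with $t' < 2$, i.e.\ $t' = 1$; thus $Q$ is a $(3\times 3)$-grid, in which two non-collinear points have exactly two common neighbours, and by (Q3) there are no further common neighbours, so $\mu(x,y) = 2$. Hence $\mu(x,y) \in \{1,2\}$ always, and every quad of $\mN$ is a grid-quad.

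I then fix the distribution of $\mu$-values at $x$ by a double count. Using the elementary fact that three pairwise collinear points of a near polygon lie on a common line (a consequence of (NP2)), one checks that for each $z \in \G_1(x)$ the four neighbours of $z$ that do not lie on the line $xz$ all lie in $\G_2(x)$; summing over the six points $z \in \G_1(x)$ gives $\sum_{y \in \G_2(x)} \mu(x,y) = 6 \cdot 4 = 24$. Writing $b := |\{y \in \G_2(x) : \mu(x,y) = 2\}|$ and using $\mu \in \{1,2\}$ together with $|\G_2(x)| = k_2$, we obtain $b = 24 - k_2$. Moreover $b = 4 g_x$, where $g_x$ is the number of grid-quads through $x$: each such quad contributes its four points at distance $2$ from $x$, these contributions are pairwise disjoint by uniqueness of the quad through a pair with $\mu \geq 2$, and they exhaust $\{y : \mu(x,y) = 2\}$. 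Since a grid-quad through $x$ uses two of the three lines through $x$ and distinct grid-quads through $x$ cannot use the same pair of lines (again by quad-uniqueness), $g_x \leq \binom{3}{2} = 3$. Hence $g_x = 6 - k_2/4 \in \{0,1,2,3\}$, forcing $k_2 \in \{12,16,20,24\}$.

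Finally I would eliminate $k_2 \in \{16,20\}$: counting incident pairs (point, grid-quad containing it) gives $v\, g_x = 9R$, where $R$ is the number of grid-quads, so $9 \mid v(6 - k_2/4)$. For $k_2 = 16$ this reads $9 \mid 39 \cdot 2 = 78$ and for $k_2 = 20$ it reads $9 \mid 51 \cdot 1 = 51$, both false. Therefore $k_2 \in \{12,24\}$. If $k_2 = 24$ then $b = 0$, so every pair at distance $2$ has exactly one common neighbour; if $k_2 = 12$ then $b = 24 - 12 = 12 = |\G_2(x)|$, so every pair at distance $2$ has exactly two common neighbours. In either case $\mu$ is a constant $c \in \{1,2\}$, as claimed. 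The only real delicacy I anticipate is the bookkeeping of Step~3 --- verifying that the four ``off-line'' neighbours of each $z \in \G_1(x)$ genuinely lie in $\G_2(x)$ and that the grid-quads through $x$ partition the set of $\mu = 2$ points --- after which the counting-lemma and divisibility steps are routine.
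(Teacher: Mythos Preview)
Your proof is correct and follows essentially the same route as the paper: use Lemma~\ref{lem:count} to make $k_2$ a constant, invoke Lemma~\ref{lem:quad_order} to force all quads to be grids, double count edges between $\Gamma_1(x)$ and $\Gamma_2(x)$ to get a constant number $g_x$ of grid-quads through a point, bound $g_x \leq 3$, and kill the intermediate values by the divisibility of the global grid-quad count by $9$. Your endgame for the $g_x = 3$ case is in fact slightly slicker than the paper's: where the paper argues separately that every pair at distance $2$ lies in a grid when $N=3$, you simply read off $b = 24 - k_2 = 12 = k_2$, so every $y \in \Gamma_2(x)$ has $\mu(x,y)=2$.
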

\begin{proof}
Let $v$ denote the total number of points of $\m N = (\m P, \m L, \mI)$. 
For a fixed point $x$ let $n_i(x)$ denote the number of points at distance $i \in \{ 0,1,2,3 \}$ from $x$. 
For all $x \in \m P$, we have $n_0(x) = 1$, $n_1(x) = 6$, and thus
\begin{equation} \label{eq:1} 
n_2(x) + n_3(x) = v - 7. 
\end{equation}
\\ 
By Lemma \ref{lem:count} we have 
\begin{equation}
\label{eq:2}
n_0(x) - \frac{n_1(x)}{2} + \frac{n_2(x)}{4} - \frac{n_3(x)}{8} = 0.
\end{equation}
\\
Solving equations (\ref{eq:1}) and (\ref{eq:2}) we get that $n_2(x) = (v + 9)/3$ and $n_3(x) = (2v - 30)/3$ for all $x \in \m P$.
Therefore these numbers only depend on $v$, and we can define constants 
\begin{equation}
n_0 = 1, n_1 = 6, n_2 = (v + 9)/3, n_3 = (2v - 30)/3.
\end{equation}
\\ 
By Lemma \ref{lem:quad_order} all quads of $\m N$ are grid-quads ($\cong \mathbb{L}_3 \times \mathbb{L}_3$). 
For a point $x$, let $N(x)$ be the number of grid-quads that contain $x$.  
Then the number of points at distance $2$ from $x$ that are contained in a grid along with $x$ is equal to $4N(x)$ since there is a unique quad through a pair of points at distance $2$ which have more than one common neighbor. 
Double counting edges between $\Gamma_1(x)$ and $\Gamma_2(x)$ we get that $2 \cdot 4N(x) + 1 \cdot (n_2 - 4N(x)) = n_1 \cdot 4$, and hence $N(x) = (63 - v)/12$.
So, the total number of grid-quads through a point is a constant given by $N := (63 - v)/12$. 

It is known that there cannot be more than one quad through a pair of intersecting lines (see eg. \cite[Thm. 1.4]{bdb-book}).
Since the number of lines through each point is $3$, we must have $N \in \{0, 1, 2, 3\}$. 
Since $v = 63 - 12N$, using double counting we get that the total number of grid-quads in $\m N$ is \[N(63 - 12N)/9.\]
This number is not an integer if $N \in \{1, 2\}$. 
Therefore, $N$ must be $0$ or $3$. 
If $N$ is $0$, then there are no quads, and hence every two points at distance $2$ from each other have a unique common neighbor. 
Say $N$ is equal to $3$ and let $x, y$ be a pair of points at distance $2$ from each other. 
Every line through $x$ is contained in precisely two of the three grid-quads through $x$. 
Thus for every neighbor $z$ of $x$, the two lines through $z$ that contain a point of $\Gamma_2(x)$ are contained in grids through $x$. 
This implies that there is a grid through $x$ and $y$, i.e., the number of common neighbors between $x$ and $y$ is $2$. 
\end{proof}

\begin{cor}
Every near hexagon of order $(2,2)$ is isomorphic to one of the following: $\mathrm{H}(2)$, $\mathrm{H}(2)^D$, $\mathbb{L}_3 \times \mathbb{L}_3 \times \mathbb{L}_3$. 
\end{cor}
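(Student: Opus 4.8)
The plan is to combine the dichotomy established in the previous lemma with two known classification results. That lemma shows that the number $c$ of common neighbours of an arbitrary pair of points at distance $2$ is a global constant lying in $\{1,2\}$, so it suffices to deal with the two cases $c=1$ and $c=2$ separately.

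First I would treat the case $c=1$. Here I claim that $\m N$ satisfies axiom (GP) and is therefore a generalized hexagon. For a pair of points $x,y$ at distance $i=1$ the set $\{z \in \m P : \dist(x,z)=0,\ \dist(z,y)=1\}$ equals $\{x\}$ and hence is a singleton; for a pair $x,y$ at distance $i=2$ the set $\{z \in \m P : \dist(x,z)=1,\ \dist(z,y)=1\}$ is precisely the set of common neighbours of $x$ and $y$, which has size $c=1$ by hypothesis. Thus every instance of (GP) holds, so $\m N$ is a generalized hexagon of order $(2,2)$, and by the classification of such hexagons due to Cohen and Tits \cite{Co-Ti} we conclude that $\m N \cong \mathrm{H}(2)$ or $\m N \cong \mathrm{H}(2)^D$.

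Next I would treat the case $c=2$. Now every two points of $\m N$ at distance $2$ have more than one common neighbour, so $\m N$ falls squarely under the hypotheses of \cite[Theorem 1.1]{BCHW}; inspecting their list, the only near hexagon of order $(2,2)$ in which any two points at distance $2$ have more than one common neighbour is the product near polygon $\mathbb{L}_3 \times \mathbb{L}_3 \times \mathbb{L}_3$ (entry (xi) of their classification). This exhausts both cases and proves the corollary. I do not expect any real obstacle here, since all the combinatorial work has already been carried out in the preceding lemmas; the only points needing care are checking that the hypotheses of the invoked theorems are genuinely met — namely that (GP) holds in the case $c=1$ (verified above) and that, in the case $c=2$, $\m N$ really appears in the BCHW list and is not excluded by some finiteness or regularity condition, which is immediate because having order $(2,2)$ makes $\m N$ finite with exactly three points on every line, precisely the regime that theorem covers.
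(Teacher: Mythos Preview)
Your proof is correct and follows exactly the approach the paper outlines at the start of Section~\ref{sec:H2}: split into the cases $c=1$ and $c=2$, invoke Cohen--Tits for the generalized hexagon case and \cite[Theorem~1.1]{BCHW} for the other. The paper leaves the corollary unproved beyond that paragraph, so your explicit verification of axiom (GP) for $i=1,2$ is a small but welcome addition.
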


It is known that the generalized hexagon $\mathrm{H}(2)^D$ has subgeometries isomorphic to $\mathrm{H}(2,1)$, while $\mathrm{H}(2)$ does not have such subgeometries. We can thus finish the proof of Theorem \ref{thm:H2} by showing that also $\mathbb{L}_3 \times \mathbb{L}_3 \times \mathbb{L}_3$ does not contain any subgeometry isomorphic to $\mathrm{H}(2, 1)$. So, let $\mH_1 \cong \mathrm{H}(2,1)$ be a subgeometry of $\mH_2 \cong \mathbb{L}_3 \times \mathbb{L}_3 \times \mathbb{L}_3$. We know that $\mH_1$ has $21$ points and $\mH_2$ has $27$ points. 
Each point of $\mH_1$ is collinear with exactly two points of $\mH_2 \setminus \mH_1$. Since $|\mH_2 \setminus \mH_1| = 6$, there must be a point in $\mH_2 \setminus \mH_1$ collinear with at least $(21 \times 2)/6 = 7$ points of $\mH_1$. This contradicts the fact that $\mH_2$ has order $(2,2)$. 

\section{Characterization of the Hall-Janko near octagon}
\label{sec:HJ}

For this section let $\mN$ be a near octagon of order $(2,4)$ with a generalized hexagon $\mH$ isomorphic to $\mathrm{H}(2)^D$ isometrically embedded in it. 
The valuation geometry $\mV$ of $\m H$ is described in Tables \ref{tab1:HD2} and \ref{tab2:HD2}.
In Table \ref{tab1:HD2}, the column Value Distribution denotes the distribution of points of $\m H$ as per the value they have. 
In Table \ref{tab2:HD2}, the entries denote the number of lines of a given type through a point of a given type in $\mV$.  
From Table \ref{tab2:HD2} it can be seen that the set of valuations of type $A$ and $B$ form a subspace of $\mV$, and hence we can define a full subgeometry $\mV_{A,B}$ of $\mV$ induced by these valuations. 
In this section we will show that $\mN$ is isomorphic to $\mV_{A, B}$. 
Since $\HJ$ is a near octagon of order $(2,4)$ with $\mathrm{H}(2)^D$ isometrically embedded in it, this will prove Theorem \ref{thm:HJ}. 

\begin{table}[!htbp]
\begin{center}
\begin{tabular}{|c||c|c|c|c|c|}
 \hline
  Type & $\#$ & $M_f$ & $|\mO_f|$ & $|H_f|$  & Value distribution \\ \hline \hline
  $A$ & $63$ & $3$ & $1$ & $31$ & $[1, 6, 24, 32]$  \\ \hline
  $B$ & $252$ & $3$ & $1$ & $47$ & $[1, 14, 32, 16]$ \\ \hline
  $C$ & $252$ & $2$ & $1$ & $23$ & $[ 1, 22, 40, 0 ]$ \\ \hline
  $D$ & $1008$ & $2$ & $5$ & $31$ & $[5, 26, 32, 0]$ \\ \hline
\end{tabular}
\end{center}
\caption{The valuations of $\mathrm{H}(2)^D$}
\label{tab1:HD2}
\end{table}

\begin{table}[!htbp]
\begin{center}
\begin{tabular}{|c||c|c|c|c|}
\hline 
Type & $A$ & $B$ & $C$ & $D$ \\ \hline \hline

$AAA$ & $3$ & -- & -- & -- \\ \hline

$ABB$ & $2$ & $1$ & -- & -- \\ \hline 

$ACC$ & $2$ & -- & 1 & -- \\ \hline 

$ADD$ & $24$ & -- & -- & $3$ \\ \hline 

$BBB$ & -- & $4$ & -- & -- \\ \hline

$BCC$ & -- & $1$ & $2$ & -- \\ \hline 

$BDD$ & -- & $4$ & -- & $2$ \\ \hline 

$CCC$ & -- & -- & 8 & -- \\ \hline 

$CCD$ & -- & -- & $40$ & $5$ \\ \hline

$CDD$ & -- & -- & $4$ & $2$ \\ \hline

$DDD$ & -- & -- & -- & $10$ \\ \hline   
\end{tabular}
\end{center}
\caption{The lines of the valuation geometry $\mathcal{V}$ of $\mathrm{H}(2)^D$} 
\label{tab2:HD2}
\end{table}

\begin{lem} \label{lemHJ:basic}
\begin{enumerate}[$(a)$]
\item Each point of $\mN$ is at distance at most $2$ from $\mH$. 
\item Points at distance $1$ from $\mH$ must be of type $A$, $B$ or $C$.
\item Points at distance $2$ from $\mH$ must be of type $C$ or $D$. 
\end{enumerate}
\end{lem}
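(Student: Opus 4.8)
The plan is to work entirely with the valuations of $\mH$ induced by points of $\mN$, as summarised in Tables~\ref{tab1:HD2} and~\ref{tab2:HD2}. First I would note that the embedding is automatically \emph{full}: a line of $\mH$ has three points, all lying in $\mH$, and as a line of $\mN$ it also has exactly three points (order $(2,4)$), so the two coincide. Hence Lemmas~\ref{lem:embeddings} and~\ref{new} apply to the pair $(\mH,\mN)$, and for every point $x$ of $\mN$ at distance $i$ from the point set $\mP$ of $\mH$ we obtain a valuation $f_x$ of $\mH$ with $M_{f_x}\le d-i=4-i$. Part~$(a)$ is then immediate: by Table~\ref{tab1:HD2} every valuation of $\mH$ satisfies $M_f\ge 2$, so $2\le 4-i$ and $i\le 2$. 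Part~$(c)$ is equally quick: if $i=2$ then $M_{f_x}\le 2$, and the only valuation types with $M_f=2$ are $C$ and $D$.

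For part~$(b)$ the bound $M_{f_x}\le 3$ excludes nothing, so the substance of the lemma lies here. The first step I would establish is that every line of $\mN$ meets $\mH$ in at most one point: two distinct points of $\mH$ on a common line of $\mN$ would be at $\mH$-distance~$1$ by isometry, hence on a common line of $\mH$, and by fullness together with uniqueness of the line through two points this $\mH$-line would equal the original $\mN$-line, which however contains a point outside $\mH$ --- a contradiction. Consequently, for a point $x$ with $\dist(x,\mP)=1$, the number of points of $\mH$ collinear with $x$ --- which is exactly $|\mathcal{O}_{f_x}|$, since these are the points of $\mH$ at $\mN$-distance $1$ from $x$ --- equals the number of lines through $x$ meeting $\mH$, hence is at most~$5$. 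By Table~\ref{tab1:HD2} this quantity is $1$ for types $A,B,C$ and $5$ for type~$D$, so it remains only to rule out the value~$5$.

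The crux is then a short counting argument in the valuation geometry $\mV$. Suppose $\dist(x,\mP)=1$ and $|\mathcal{O}_{f_x}|=5$, so $f_x$ has type~$D$ and all five lines through $x$ meet $\mH$, say $L_i=\{x,z_i,w_i\}$ with $z_i\in\mH$ and (by the previous step) $w_i\notin\mH$ for $i=1,\dots,5$. Each $f_{z_i}$ is the classical valuation of $\mH$ centred at $z_i$, which has type~$A$, and by Lemma~\ref{lem:embeddings}$(3)$ the triple $\{f_x,f_{z_i},f_{w_i}\}$ is a $\mV$-line; it contains a valuation of type~$A$ and one of type~$D$, so by Table~\ref{tab2:HD2} it has type~$ADD$. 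These five $\mV$-lines through $f_x$ are pairwise distinct, since the $z_i$ are distinct (hence so are the classical valuations $f_{z_i}$, and none of them can equal any $f_{w_j}$ as the latter is of type~$D$). Thus $f_x$ lies on at least five $\mV$-lines of type~$ADD$, contradicting the entry~$3$ in the $ADD$-row and $D$-column of Table~\ref{tab2:HD2}. This contradiction proves part~$(b)$. I expect the only point requiring genuine care is this last bookkeeping --- checking that the five induced $\mV$-lines are really distinct and all of type~$ADD$, so that the tabulated bound~$3$ is exceeded; parts $(a)$ and $(c)$ and the ``line meets $\mH$ in at most one point'' observation are routine.
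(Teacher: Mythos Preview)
Your proof is correct and follows essentially the same route as the paper: parts $(a)$ and $(c)$ are read off from the bound $M_{f_x}\le 4-i$ together with Table~\ref{tab1:HD2}, and part $(b)$ is handled by observing that a type~$D$ point at distance~$1$ would force five $\mV$-lines of type $ADD$ through $f_x$, contradicting the entry~$3$ in Table~\ref{tab2:HD2}. You supply more justification than the paper does (fullness, the subspace property of $\mH$, and the distinctness of the five $\mV$-lines), which is fine.

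One wording slip: the claim ``every line of $\mN$ meets $\mH$ in at most one point'' is literally false, since lines of $\mH$ are lines of $\mN$ meeting $\mH$ in three points. What you actually prove (and what you actually use) is that any line of $\mN$ through a point \emph{outside} $\mH$ meets $\mH$ in at most one point --- equivalently, that the point set of $\mH$ is a subspace of $\mN$. Your argument establishes exactly this, and the application to the five lines through $x$ is correct; just tighten the statement.
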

\begin{proof}
Since the maximum value of a valuation of $\mH$ is at least $2$ (see Table \ref{tab1:HD2}) and the diameter of $\mN$ is $4$, by Lemma \ref{new}(1) the distance of any point of $\mN$ to $\mH$ is at most $2$. This proves ($a$). Again by Lemma \ref{new}(1), if a point $x$ of $\mN$ lies at distance 2 from $\mH$, then $f_x$ has maximum value at most $2$, implying that $x$ can only be of type $C$ or $D$. 

Now, let $x$ be a point of type $D$ at distance $1$ from $\mH$. The five points with $f_x$-value $0$ in $\mH$ must be collinear with $x$ and necessarily be of type $A$ (all points in $\mH$ are of type $A$).
By Lemma \ref{lem:linetypes} this gives rise to five distinct $\mV$-lines of type $ADD$ through a valuation of type $D$ in the valuation geometry $\mV$ of $\mH$, which contradicts the corresponding entry in Table \ref{tab2:HD2}. 
\end{proof}

\begin{lem} \label{lemHJ:dist1}
Each point of $\mN$ at distance $1$ from $\mH$ must be of type $A$ or $B$. 
\end{lem}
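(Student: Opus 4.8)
The plan is to argue by contradiction. Suppose some point $x$ of $\mN$ at distance $1$ from $\mH$ has $f_x$ of type $C$ (which is a priori allowed by Lemma~\ref{lemHJ:basic}(b)). A type-$C$ valuation has $|\mO_{f_x}| = 1$ by Table~\ref{tab1:HD2}, so Lemma~\ref{new}(2) gives a unique point $p := \pi_{\mH}(x) \in \mH$ collinear with $x$, with $\mO_{f_x} = \{p\}$. Write the line of $\mN$ through $x$ and $p$ as $\{x, p, r\}$. The valuation $f_p$ is the classical valuation of $\mH$ with centre $p$, hence of type $A$, and $f_r = f_x \ast f_p$ by Lemma~\ref{lem:linetypes}(3); since Table~\ref{tab2:HD2} shows that $ACC$ is the only $\mV$-line type containing both an $A$- and a $C$-valuation, $f_r$ must also be of type $C$, and then $f_r(p) = 0$ together with $|\mO_{f_r}| = 1$ forces $\mO_{f_r} = \{p\}$, so $p$ is also the unique point of $\mH$ collinear with $r$.

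The heart of the argument is to exploit $f_p = f_x \ast f_r$ (Lemma~\ref{lem:linetypes}(3)) together with the value distributions in Table~\ref{tab1:HD2}. First one checks that the parameter $\epsilon$ attached to the neighbouring pair $(f_x, f_r)$ equals $0$: otherwise $f_x \le f_r$ or $f_r \le f_x$ pointwise, which is incompatible with $f_x$ and $f_r$ having the same (type-$C$) value distribution unless $f_x = f_r$, contradicting Lemma~\ref{lem:linetypes}(3) applied to $\{x,p,r\}$. Unwinding the definition of $f_x \ast f_r = f_p$ and using $f_p(q) = \dist_{\mH}(p,q)$ for $q \in \mH$, one obtains: every $q$ with $\dist_{\mH}(p,q) \in \{1,2\}$ has $\dist_{\mN}(x,q) = \dist_{\mH}(p,q)+1$, while every $q$ with $\dist_{\mH}(p,q) = 3$ has $\{f_x(q), f_r(q)\} = \{1,2\}$. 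Counting the $22$ points $q$ of $\mH$ with $f_x(q) = 1$ (equivalently $\dist_{\mN}(x,q) = 2$), of which exactly $6$ lie at distance $1$ from $p$ and none at distance $2$ from $p$, one pins down a set $T$ of exactly $16$ points $q \in \mH$ with $\dist_{\mH}(p,q) = 3$, $\dist_{\mN}(x,q) = 2$ and $\dist_{\mN}(r,q) = 3$.

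I would then derive the contradiction by a fan-out count at $x$. For each $q \in T$ choose a common neighbour $w_q$ of $x$ and $q$. Since $q \in \mH$ and $w_q \notin \mH$ (the only point of $\mH$ collinear with $x$ is $p$, and $\dist_{\mN}(p,q) = 3$), $w_q$ lies at distance $1$ from $\mH$; being of type $A$, $B$ or $C$ by Lemma~\ref{lemHJ:basic}(b), it has $|\mO_{f_{w_q}}| = 1$, so Lemma~\ref{new}(2) yields $\pi_{\mH}(w_q) = q$. Hence $q \mapsto w_q$ is injective. Moreover $w_q$ lies on one of the five lines of $\mN$ through $x$, but not on $\{x,p,r\}$ (neither $p$ nor $r$ is collinear with $q$, as $\dist_{\mN}(p,q) = \dist_{\mN}(r,q) = 3$); so the $16$ distinct points $w_q$ all lie on the remaining four lines through $x$, which carry only $4 \cdot 2 = 8$ points other than $x$, a contradiction.

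I expect the only delicate step to be the computation in the second paragraph — in particular establishing that every point of $\mH$ at distance $2$ from $p$ lies at distance $3$ (not $2$) from $x$, which is precisely what makes $|T| = 16 > 8$. The degeneracy cases of Lemma~\ref{lem:linetypes}(3) (two of the three valuations on a line of $\mN$ coinciding) do not intervene here, since $f_p$ has type $A$ while $f_x$ and $f_r$ have type $C$, so all the relevant triples are genuine $\mV$-lines.
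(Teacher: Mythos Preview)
Your argument is correct and follows essentially the same route as the paper's proof: both exploit the fact that a type-$C$ valuation has $22$ points of value~$1$, of which only $6$ are neighbours of the projection $p$, leaving $16$ points that force too many distinct common neighbours of $x$ outside~$\mH$. The paper's version is slightly leaner: it simply observes that $x$ has only $9$ neighbours outside $\mH$, so by pigeonhole some such neighbour $y$ is collinear with at least two points of $\mH$, contradicting $|\mO_{f_y}|=1$. Your detour through $f_p = f_x \ast f_r$ to pin down the distances from $p$ and to exclude $r$ as a possible $w_q$ is valid but unnecessary, since $16 > 9$ already gives the contradiction without knowing whether the $16$ points lie at $\mH$-distance $2$ or $3$ from $p$, and without excluding $r$.
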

\begin{proof}
Let $x$ be a point of type $C$ at distance $1$ from $\mH$. 
By Lemma \ref{new}(2) and Table \ref{tab1:HD2}, there is a unique point $x'$ in $\mH$ collinear with $x$. 
Again from Table \ref{tab1:HD2} we see that there are $22$ points with $f_x$-value $1$ in $\mH$, which must necessarily be at distance $2$ from $x$. 
Six of these points are neighbors of $x'$ in $\mH$ and these are the only ones that have a common neighbor with $x$ that lies inside $\mH$ (namely $x'$). 
The remaining $16$ points give rise to neighbors of $x$ that lie outside $\mH$. 
Since the order of $\mN$ is $(2, 4)$ and $x'$ lies in $\mH$ there are only $9$ neighbors of $x$ that lie outside $\mH$. 
Therefore, at least one such neighbor $y$ must be collinear with more than one point in $\mH$. 
By Lemma \ref{lemHJ:basic} the point $y$ must be of type $A$, $B$ or $C$ and for each of these possibilities we have $|\mO_{f_y}| = 1$. 
This contradicts Lemma \ref{new}(2). 
\end{proof}

\begin{lem} \label{lemHJ:AB}
If $x$, $y$ are two points of $\mN$, not contained in $\mH$, of type $A$ and $B$ respectively, then $x$ and $y$ cannot be collinear.
\end{lem}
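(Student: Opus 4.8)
The plan is to argue by contradiction: suppose $x \sim y$ in $\mN$, with $x$ of type $A$ and $y$ of type $B$, both lying outside $\mH$. By Lemmas \ref{lemHJ:basic} and \ref{lemHJ:dist1} both $x$ and $y$ lie at distance exactly $1$ from $\mH$, and since $|\mO_{f_x}| = |\mO_{f_y}| = 1$ by Table \ref{tab1:HD2}, Lemma \ref{new}(2) yields a unique point $x' := \pi_{\mH}(x) \in \mH$ collinear with $x$ and a unique point $y' := \pi_{\mH}(y) \in \mH$ collinear with $y$. Note that for any point $p$ of $\mH$, the valuation $f_p$ of $\mH$ induced by $p$ is the classical valuation $\dist(p, \cdot)$, which has value distribution $[1, 6, 24, 32]$ and is therefore of type $A$ (the only type in Table \ref{tab1:HD2} with that distribution).

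First I would determine the types along two lines of $\mN$. Let $z$ be the third point of the line $L := xy$. Since $f_x$ and $f_y$ have distinct types, Lemma \ref{lem:linetypes} shows that $\{f_x, f_y, f_z\}$ is a genuine $\mV$-line; by Table \ref{tab2:HD2} the only $\mV$-line type containing both an $A$ and a $B$ is $ABB$, so this $\mV$-line is of type $ABB$ and in particular $f_z$ is of type $B$. Applying the same argument to the line $yy'$ of $\mN$ with third point $y''$: since $f_{y'}$ is of type $A$ and $f_y$ of type $B$, the $\mV$-line $\{f_{y'}, f_y, f_{y''}\}$ is also of type $ABB$.

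The crucial observation is that, by Table \ref{tab2:HD2}, there is exactly one $\mV$-line of type $ABB$ through a given type-$B$ valuation. Hence the two $\mV$-lines $\{f_x, f_y, f_z\}$ and $\{f_{y'}, f_y, f_{y''}\}$ through $f_y$ coincide, so that $\{f_x, f_z\} = \{f_{y'}, f_{y''}\}$. As $f_x$ has type $A$ while $f_{y''}$ has type $B$, this forces $f_x = f_{y'}$, and evaluating at $y'$ gives $f_x(y') = f_{y'}(y') = 0$; thus $y' \in \mO_{f_x} = \{x'\}$, i.e.\ $x' = y'$.

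It remains to derive a contradiction from $x' = y'$. This point of $\mH$ is then collinear with both $x$ and $y$, which are distinct points of the line $L$. If $x' \notin L$, then $L$ contains two points at distance $1$ from $x'$, contradicting axiom (NP2); and if $x' \in L$, then $z = x'$, so $f_z = f_{x'}$ is of type $A$, contradicting the conclusion above that $f_z$ is of type $B$. In either case we obtain a contradiction, completing the proof. I expect the only real difficulty to be the bookkeeping that simultaneously realizes the unique $ABB$-line through $f_y$ as coming from the two lines $xyz$ and $yy'y''$ of $\mN$; once that is set up, the identification $f_x = f_{y'}$ (hence $x' = y'$) is forced, and the resulting clash with (NP2) is immediate.
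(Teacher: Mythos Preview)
Your proof is correct and follows essentially the same route as the paper: use Table~\ref{tab2:HD2} to see that both $xy$ and $yy'$ induce $\mV$-lines of type $ABB$, invoke the uniqueness of the $ABB$-line through the type-$B$ valuation $f_y$ to force $f_x = f_{y'}$ (hence $x' = y'$), and obtain a contradiction with (NP2). Your final case split is slightly more explicit than the paper's one-line appeal to (NP2), but the argument is the same.
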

\begin{proof}
Let $x$, $y$ be such points and suppose they are collinear. 
By Lemma \ref{lemHJ:basic}  they must be at distance $1$ from $\mH$. 
The three valuations induced by the three points on the line $xy$ must be distinct (see Lemma \ref{lem:linetypes}) and therefore, the line $xy$ gives rise to a $\mV$-line of $\mH \cong \mathrm{H}(2)^D$. 
From Table \ref{tab2:HD2} it follows that the line $xy$ is of type $ABB$.
Let $y'$ be the unique neighbor of $y$ in $\mH$. 
Then by a similar reasoning the line $yy'$ is also of type $ABB$. 
But, in the valuation geometry there is a unique line of type $ABB$ through a valuation of type $B$.
Therefore, the points $x$ and $y'$ induce the same type $A$ valuation, which shows that $\mO_{f_x} = \{y'\}$ and hence, $x$ and $y'$ are collinear. 
This contradicts (NP2). 
\end{proof}

\begin{lem} \label{lemHJ:B}
If $x$ is a point of type $B$ in $\mN$, then it has a unique neighbor in $\mH$ and all the other neighbors of $x$ must induce distinct type $B$ valuations of $\mH$. 
\end{lem}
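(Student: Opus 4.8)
The plan is to determine the entire neighbourhood of $x$ in $\mN$ by combining the value distribution of a type $B$ valuation (Table \ref{tab1:HD2}) with the order $(2,4)$ and the structural lemmas \ref{lemHJ:basic}--\ref{lemHJ:AB}. First I would note that a type $B$ point lies at distance exactly $1$ from $\mH$: it is not in $\mH$, since every point of $\mH$ induces a type $A$ valuation; it is not at distance $2$ from $\mH$, by Lemma \ref{lemHJ:basic}$(c)$; and it is within distance $2$, by Lemma \ref{lemHJ:basic}$(a)$. Since $|\mO_{f_x}| = 1$ for a type $B$ valuation, Lemma \ref{new}$(2)$ produces the unique neighbour $x'$ of $x$ inside $\mH$, which settles the first assertion. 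Because $\mN$ has order $(2,4)$, the point $x$ has $2 \cdot 5 = 10$ neighbours, so exactly $9$ of them lie outside $\mH$; it remains to show that these $9$ induce $9$ pairwise distinct type $B$ valuations.

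Next I would examine the $14$ points of $\mH$ with $f_x$-value $1$, which (since $f_x(y) = \dist(x,y) - 1$) are exactly the points of $\mH$ at distance $2$ from $x$. Six of them are the neighbours of $x'$ inside $\mH$ and have $x'$ as a common neighbour with $x$; the other $8$ points, which I will call far points, cannot have a common neighbour with $x$ inside $\mH$, as the only candidate is $x'$ and a far point is non-collinear with $x'$ inside $\mH$, hence also in $\mN$ by isometry. Thus for each far point $p$ there is a common neighbour $y_p$ of $x$ and $p$ lying outside $\mH$. Being collinear with $p \in \mH$, the point $y_p$ lies at distance $1$ from $\mH$, so it is of type $A$ or $B$ by Lemma \ref{lemHJ:dist1}, and of type $B$ by Lemma \ref{lemHJ:AB} applied to the type $B$ point $x$. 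From $|\mO_{f_{y_p}}| = 1$ and $p \in \mO_{f_{y_p}}$ we get $\mO_{f_{y_p}} = \{p\}$, so the $8$ points $y_p$ are pairwise distinct and induce pairwise distinct valuations.

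For the ninth neighbour I would use the third point $z$ of the line $xx'$ of $\mN$. By Lemma \ref{lem:linetypes}$(3)$ we have $f_x \ast f_{x'} = f_z$, and since from Table \ref{tab2:HD2} the only $\mV$-line type through a type $B$ valuation that also meets a type $A$ valuation is $ABB$, the valuation $f_z$ has type $B$; in particular $z \notin \mH$. Moreover $z$ is collinear with $x' \in \mH$, so $\dist(z,\mH) = 1$ and $\mO_{f_z} = \{x'\}$, which is not one of the far points, so $z$ is distinct from every $y_p$. Hence the $9$ neighbours of $x$ outside $\mH$ are precisely $z, y_{p_1}, \dots, y_{p_8}$, all of type $B$, with pairwise distinct singletons $\mO_{f_z}, \mO_{f_{y_{p_1}}}, \dots, \mO_{f_{y_{p_8}}}$, and therefore with pairwise distinct induced valuations.

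I expect the only real work here to be bookkeeping: one must check that the equality ``$8$ far points together with the point $z$ account for all $9$ outside-neighbours of $x$'' is forced, which is exactly what simultaneously rules out any neighbour of $x$ at distance $2$ from $\mH$ (that is, a type $C$ or $D$ point) without having to consult the $C$- and $D$-rows of Table \ref{tab2:HD2}. The one genuinely computational input is the split of the $f_x$-value-$1$ points of $\mH$ into the $6$ neighbours of $x'$ and the $8$ far points, which follows from $\mathrm{H}(2)^D$ being a generalized hexagon of order $(2,2)$.
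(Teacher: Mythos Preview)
Your proof is correct and follows essentially the same line as the paper's: establish that $x$ lies at distance $1$ from $\mH$ with a unique neighbour $x'$ there, split the $14$ points of $f_x$-value $1$ into the $6$ neighbours of $x'$ and $8$ far points, produce one type-$B$ neighbour of $x$ outside $\mH$ for each far point via the common-neighbour argument with Lemmas \ref{lemHJ:dist1} and \ref{lemHJ:AB}, add the third point $z$ on $xx'$ as the ninth, and conclude by distinguishing the induced valuations through their singletons $\mO_f$. The only cosmetic difference is that you justify the type of $z$ via Lemma \ref{lem:linetypes}(3) and the $ABB$ row of Table \ref{tab2:HD2}, whereas the paper states this more tersely.
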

\begin{proof}
Let $x$ be such a point, necessarily at distance $1$ from $\mH$ by Lemma \ref{lemHJ:basic}. 
By Lemma \ref{new}(2) and Table \ref{tab1:HD2}, it has a unique neighbor, say $x'$, in $\mH$. 
There are $14$ points with $f_x$-value $1$ in $\mH$ and $6$ of them are the neighbors of $x'$. 
The remaining $8$ must give rise to neighbors of $x$ lying outside $\mH$. 
Let $y$ be such a neighbor. 
By Lemmas \ref{lemHJ:dist1} and \ref{lemHJ:AB}, $y$ must be of type $B$ and then by Lemma \ref{new}(2) it cannot lie on the line $xx'$. 
Therefore, we get $8$ type $B$ neighbors of $x$ each corresponding to a distinct valuation of $\mH$ (since the set $\mO_f$ is distinct for each such valuation).
The third point on the line $xx'$ must also be of type $B$ and induce a valuation distinct from all other type $B$ neighbors of $x$. 
Since the order of $\mN$ is $(2,4)$, we have accounted for all neighbors of $x$. 
\end{proof}

\noindent The following is an immediate consequence of Lemma \ref{lemHJ:B}.

\begin{cor}\label{corHJ:B}
There are no lines in $\mN$ of type $BCC$ or $BDD$. 
\end{cor}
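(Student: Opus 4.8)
The plan is to read this off directly from Lemma~\ref{lemHJ:B}. A line of $\mN$ of type $BCC$ or $BDD$ would, in particular, contain a point $x$ with $f_x$ of type $B$ which is collinear in $\mN$ with two points whose induced valuations are of type $C$ or of type $D$. So it suffices to check that no point of type $B$ in $\mN$ is collinear with a point of type $C$ or $D$.

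By Lemma~\ref{lemHJ:B}, a point $x$ of type $B$ has exactly one neighbour $x'$ lying in $\mH$, and each of its remaining neighbours induces a type $B$ valuation of $\mH$. Since $x'\in\mH$, it is one of the points of $\mH$, and all of these induce classical valuations, which are of type $A$ (value distribution $[1,6,24,32]$), as already used in the proof of Lemma~\ref{lemHJ:basic}. Hence every neighbour of $x$ is of type $A$ or $B$; in particular $x$ is collinear with no point of type $C$ or $D$, and therefore no line of $\mN$ through a type $B$ point can be of type $BCC$ or $BDD$. (One could phrase the same argument via the valuation geometry: if the three points of such a line induce three distinct valuations of $\mH$, the line is a $\mV$-line and Table~\ref{tab2:HD2} limits its type through a type $B$ point to $ABB$, $BBB$, $BCC$, $BDD$, and Lemma~\ref{lemHJ:B} discards the last two; if two of the induced valuations coincide, then by Lemma~\ref{lem:linetypes}$(3)$ all three do, and the line has all points of one type.)

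There is essentially no obstacle here: this is a genuine corollary of Lemma~\ref{lemHJ:B}, all the real work having been done there. The only point needing a word of care is that the unique neighbour of a type $B$ point lying in $\mH$ is of type $A$ rather than some other valuation type sharing the property $|\mO_f|=1$; this is immediate from the fact, recorded in the proof of Lemma~\ref{lemHJ:basic}, that every point of $\mH$ induces a type $A$ valuation.
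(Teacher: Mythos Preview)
Your proof is correct and follows exactly the paper's approach: the corollary is stated as an immediate consequence of Lemma~\ref{lemHJ:B}, and your argument that every neighbor of a type $B$ point is of type $A$ or $B$ is precisely how one reads it off.
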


\begin{lem}\label{lemHJ:noCD}
There is no point in $\mN$ of type $C$ or $D$. 
\end{lem}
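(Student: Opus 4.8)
The plan is to assume, for contradiction, that $\mathcal{N}$ has a point of type $C$ or $D$. By Lemma~\ref{lemHJ:basic} together with Lemma~\ref{lemHJ:dist1} (and the fact that the points of $\mathcal{H}$ induce the classical, hence type-$A$, valuations of $\mathcal{H}$), such a point must lie at distance exactly $2$ from $\mathcal{H}$: a type-$D$ or type-$C$ point is not in $\mathcal{H}$, not at distance $1$ from $\mathcal{H}$, and is at distance $\leq 2$ from $\mathcal{H}$. So fix $x$ at distance $2$ from $\mathcal{H}$, of type $C$ or $D$ by Lemma~\ref{lemHJ:basic}(c).

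First I would dispose of type $D$. If $x$ has type $D$ then $|\mathcal{O}_{f_x}|=5$; write $\mathcal{O}_{f_x}=\{z_1,\dots,z_5\}\subseteq\mathcal{H}$, so each $z_i$ lies at distance $2$ from $x$ and hence has a common neighbour $y_i$ of $x$ lying at distance $1$ from $\mathcal{H}$. By Lemma~\ref{lemHJ:dist1} each $y_i$ has type $A$ or $B$; a type-$B$ $y_i$ would, together with $x$ and the third point of the line $xy_i$, give a $\mathcal{V}$-line of type $BDD$ (the three valuations are pairwise distinct by Lemma~\ref{lem:linetypes}, and from Table~\ref{tab2:HD2} this is the only $\mathcal{V}$-line type containing both a $B$ and a $D$), contradicting Corollary~\ref{corHJ:B}; so $y_i$ has type $A$. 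Since $y_i\sim z_i\in\mathcal{H}$, the unique point of $\mathcal{O}_{f_{y_i}}$ is $z_i$, i.e.\ $f_{y_i}$ is the classical valuation of $\mathcal{H}$ with centre $z_i$, and these are pairwise distinct. By Lemma~\ref{lem:linetypes} the line $xy_i$ induces a $\mathcal{V}$-line of type $ADD$ through $f_x$, and the five $\mathcal{V}$-lines so obtained are pairwise distinct because their type-$A$ vertices $f_{y_i}$ are. This contradicts the entry $3$ in row $D$, column $ADD$ of Table~\ref{tab2:HD2}.

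Hence $x$ has type $C$ and $\mathcal{O}_{f_x}=\{z\}$ for a single $z\in\mathcal{H}$ with $d(x,z)=2$. I would then record two structural facts. (i) Every neighbour $y$ of $x$ at distance $1$ from $\mathcal{H}$ has type $A$ (type $B$ is excluded as above, now via a $BCC$-line and Corollary~\ref{corHJ:B}) and carries the classical valuation $g_z$ with centre $z$: by Lemma~\ref{new}(2) the point $\pi_{\mathcal{H}}(y)$ is the unique neighbour of $y$ in $\mathcal{H}$, and $d(x,\pi_{\mathcal{H}}(y))\leq 2$ while $\pi_{\mathcal{H}}(y)\in\mathcal{H}$ forces $d(x,\pi_{\mathcal{H}}(y))\geq 2$, so $\pi_{\mathcal{H}}(y)\in\Gamma_2(x)\cap\mathcal{H}=\mathcal{O}_{f_x}=\{z\}$. (ii) If $y$ is such a neighbour, the line of $\mathcal{N}$ through $z$ and $y$ is "pure type $A$'', meeting $\mathcal{H}$ only in $z$; applying (NP2) to $x$ and this line, $y$ is its unique point nearest $x$, so the third point of it is not a neighbour of $x$. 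Thus distinct type-$A$ neighbours of $x$ lie on distinct pure-type-$A$ lines through $z$, and $x$ has at most two of them.

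The crux is the following claim: \emph{every} type-$A$ point $a$ of $\mathcal{N}$ at distance $1$ from $\mathcal{H}$ has exactly $8$ neighbours of type $C$, all lying on $ACC$-lines. Indeed, writing $z':=\pi_{\mathcal{H}}(a)$ so $f_a=g_{z'}$, the three lines of $\mathcal{N}$ through $z'$ contained in $\mathcal{H}$ induce three pairwise distinct $\mathcal{V}$-lines of type $AAA$ through $g_{z'}$, which by Table~\ref{tab2:HD2} are \emph{all} such $\mathcal{V}$-lines; hence none of the four lines of $\mathcal{N}$ through $a$ other than the pure-type-$A$ one through $z'$ can be of type $AAA$, and such a line is not of type $ABB$ (Lemma~\ref{lemHJ:AB}) nor of type $ADD$ (no type-$D$ points), so it is of type $ACC$, proving the claim. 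To conclude I would combine this with a global count: using that each point of $\mathcal{H}$ has exactly $4$ neighbours outside $\mathcal{H}$ and each point outside $\mathcal{H}$ exactly one inside (so $4\cdot 63=252$ points lie at distance $1$ from $\mathcal{H}$), that $\mathcal{N}$ has no type-$D$ points, the resulting short list of line-types of $\mathcal{N}$ (lines inside $\mathcal{H}$; pure-type-$A$ and pure-type-$B$ lines through $\mathcal{H}$; $BBB$-, $ACC$-, $CCC$- and monochromatic type-$C$ lines), and the constraints on quads of $\mathcal{N}$ through pairs of points at distance $2$ (via the Shult--Yanushka criterion, each line having three points), to reach a numerical contradiction that forces the set of type-$C$ points to be empty. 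I expect this final counting step — ruling out that pure-type-$A$ lines through $\mathcal{H}$ occur at all — to be the main obstacle; the type-$D$ case, facts (i) and (ii), and the ``$8$ type-$C$ neighbours'' claim are routine consequences of the valuation tables and the near-polygon axioms.
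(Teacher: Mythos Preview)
Your treatment of the type-$D$ case is correct and matches the paper's argument essentially verbatim. Facts (i) and (ii) in your type-$C$ discussion are also correct. The problem is the ``crux'' claim and what comes after it.

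The claim that a type-$A$ point $a\in\Gamma_1(\mH)$ has no $AAA$-line through it (apart from the degenerate line $az'$) is false, and your argument for it is a non sequitur. You observe that the three $\mV$-lines of type $AAA$ through $g_{z'}$ are already induced by the three lines of $\mH$ through $z'$; but this in no way prevents a line of $\mN$ through $a$ from inducing \emph{the same} $\mV$-line as one of those three. Nothing in Lemma~\ref{lem:linetypes} or Table~\ref{tab2:HD2} forbids two distinct lines of $\mN$ from mapping to the same $\mV$-line. In fact the paper later shows (Lemma~\ref{lemHJ:W2}, \emph{after} Lemma~\ref{lemHJ:noCD} is available) that every one of the four lines through such an $a$ not meeting $\mH$ is of type $AAA$; so the claimed eight type-$C$ neighbours do not exist, and the global count you are setting up cannot get off the ground. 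You also acknowledge that the final counting step is not carried out, so even on its own terms the argument is incomplete.

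The paper's route is much shorter and avoids any global counting. With (i) in hand (every neighbour of $x$ in $\Gamma_1(\mH)$ is of type $A$ with projection $z$), the paper simply observes that the $22$ points of $\mH$ with $f_x$-value $1$ are at distance $3$ from $x$; the six neighbours of $z$ in $\mH$ are reached via type-$A$ neighbours of $x$, but each of the remaining $16$ must be the unique element of $\mO_{f_y}$ for some type-$C$ neighbour $y$ of $x$ (lines through $x$ are $ACC$ or $CCC$, and a type-$C$ valuation has $|\mO_{f_y}|=1$). Since $x$ has at most $9$ type-$C$ neighbours, pigeonhole forces some such $y$ to have two points of $\mH$ at distance $2$, contradicting $|\mO_{f_y}|=1$. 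Your facts (i)–(ii) are compatible with this line; the detour through type-$A$ points outside $\mH$ is where the argument breaks.
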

\begin{proof}
Let $x$ be such a point, necessarily at distance $2$ from $\mH$ (see Lemmas \ref{lemHJ:basic} and \ref{lemHJ:dist1}). 
We treat the two cases separately.
\\
\textit{Case 1}: Let $x$ be of type $D$. 
By Table \ref{tab1:HD2}, $|\mH \cap \Gamma_2(x)| = 5$. 
Every line through $x$ that contains a point in $\Gamma_1(\mH)$ must be of type $ADD$ by Table \ref{tab2:HD2}, Lemma \ref{lemHJ:dist1} and Corollary \ref{corHJ:B}. 
Since each of these lines has exactly one point which lies in $\Gamma_1(\mH)$, and since that point (of type $A$) has a unique neighbor in $\mH$, it must be the case that all five lines through $x$ are of type $ADD$.
In fact, this also shows that these five lines correspond to five distinct lines in the valuation geometry. 
But we know from Table \ref{tab2:HD2} that there are only three lines of type $ADD$ through a point of type $D$ in the valuation geometry, a contradiction.
\\
\textit{Case 2}: Let $x$ be of type $C$. 
Since $|\mO_{f_x}| = 1$, we have $\mO_{f_x} = \Gamma_2(x) \cap \mH = \{x'\}$ for some $x' \in \mH$. 
Since there are no points of type $D$ (by \textit{Case 1}) and no lines of type $BCC$ (by Corollary \ref{corHJ:B}), all lines through $x$ must be of type $ACC$ or $CCC$ by Table \ref{tab2:HD2}. 
Each of the type $A$ neighbors of $x$ induces the valuation $f_{x'}$ of $\mH$. 
Therefore, besides the $6$ neighbors of $x'$ in $\mH$, every point of $\mH$ that has $f_x$-value $1$ must be at distance $2$ from a type $C$ neighbor of $x$. 
There are $16 = 22 - 6$ points with $f_x$-value $1$ in $\mH$ that are not neighbor of $x'$. 
Since there are at most $9$ type $C$ neighbors of $x$, there must be a type $C$ neighbor $y$ of $x$ which is at distance $2$ from two distinct points of $\mH$.
This contradicts the fact that $|\mO_{f_y}| = 1$ ($y$ is a type $C$ point).
\end{proof}

\medskip \noindent By Lemmas \ref{new}(2), \ref{lemHJ:basic} and \ref{lemHJ:noCD}, we have:

\begin{cor} \label{new2}
Every point $x$ of $\mN$ not contained in $\mH$ has type A or B, and lies at distance 1 from a unique point of $\mH$ (the projection of $x$ in $\mH$).
\end{cor}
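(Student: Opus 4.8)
The plan is to chain together the three results cited just before the corollary, with no new work required. First I would pin down the distance of $x$ to $\mH$: by Lemma \ref{lemHJ:basic}$(a)$ we have $\dist(x,\mH)\le 2$, and if $\dist(x,\mH)=2$ then Lemma \ref{lemHJ:basic}$(c)$ forces $f_x$ to be of type $C$ or $D$, which is impossible by Lemma \ref{lemHJ:noCD}. Hence every point $x$ of $\mN$ not contained in $\mH$ satisfies $\dist(x,\mH)=1$.

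Next I would determine the type of $f_x$. Since $x$ lies at distance $1$ from $\mH$, Lemma \ref{lemHJ:basic}$(b)$ restricts $f_x$ to type $A$, $B$ or $C$, and type $C$ is excluded either by Lemma \ref{lemHJ:dist1} or again by Lemma \ref{lemHJ:noCD}. So $f_x$ has type $A$ or $B$. Reading the column $|\mO_f|$ of Table \ref{tab1:HD2}, both of these types satisfy $|\mO_{f_x}|=1$, so Lemma \ref{new}$(2)$ applies and gives a unique point $\pi_{\mH}(x)\in\mH$ collinear with $x$. Since $x\notin\mH$, being collinear with $x$ is the same as lying at distance $\dist(x,\mH)=1$ from $x$, so $\pi_{\mH}(x)$ is the unique nearest point of $\mH$ to $x$, i.e.\ the projection of $x$ onto $\mH$, which finishes the proof.

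There is essentially no obstacle here; the statement is a genuine corollary and all the substance has already been extracted in Lemmas \ref{lemHJ:dist1} and \ref{lemHJ:noCD}. The only point needing a moment's care is that the distance-$2$ case is not ruled out directly but through the type restriction of Lemma \ref{lemHJ:basic}$(c)$ combined with the nonexistence of type $C$ and $D$ points.
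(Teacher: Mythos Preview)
Your argument is correct and is exactly the chain of implications the paper intends: it states the corollary as an immediate consequence of Lemmas \ref{new}(2), \ref{lemHJ:basic} and \ref{lemHJ:noCD}, with no separate proof given. Your use of Lemma \ref{lemHJ:dist1} to exclude type $C$ at distance $1$ is fine but redundant, since Lemma \ref{lemHJ:noCD} already rules out all type $C$ points.
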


\begin{lem}
\label{lemHJ:QuadIntersection}
Let $Q$ be a quad of $\mN$ that intersects $\mH$ nontrivially. Then $Q \cap \mH$ is either a singleton or a line.
\end{lem}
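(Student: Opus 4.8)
The plan is to show that $Q\cap\mH$ is a clique in the collinearity graph of the generalized quadrangle $Q$, and then to promote this to the desired dichotomy using the fact that $\mH$ is a subspace of $\mN$.

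I would first record two structural facts. Firstly, $\mH$ is a subspace of $\mN$: if $a,b\in\mH$ are collinear in $\mN$, then since $\mH$ is isometrically embedded they are collinear in $\mH$, so some line $L$ of $\mH$ passes through $a$ and $b$; this $L$ is also a line of $\mN$ through $a$ and $b$, hence coincides with the unique line $ab$ of the partial linear space $\mN$, and since $\mH$ is a full subgeometry all three points of $ab$ lie in $\mH$. Secondly, $Q$ is a convex subspace of $\mN$ inducing a non-degenerate generalized quadrangle, so it contains no triangle; in particular any set of pairwise collinear points of $Q$ is contained in a single line of $Q$.

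The crux is to prove that no two points of $Q\cap\mH$ are at distance $2$. Suppose $a,b\in Q\cap\mH$ with $\dist(a,b)=2$. As non-collinear points of the generalized quadrangle $Q$, which has order $(2,t')$ with $t'\geq 1$, they have at least two common neighbours inside $Q$. A common neighbour of $a$ and $b$ that happens to lie in $\mH$ would, by isometric embeddedness, be a common neighbour of $a$ and $b$ inside the generalized hexagon $\mH$, and there is exactly one such point; hence some common neighbour $c$ of $a$ and $b$ with $c\in Q$ satisfies $c\notin\mH$. Then $\dist(c,\mH)=1$, so by Corollary \ref{new2} the point $c$ is collinear with exactly one point of $\mH$, contradicting $c\sim a$, $c\sim b$ and $a\neq b$ with $a,b\in\mH$. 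Consequently, since any two distinct points of $Q$ lie at distance at most $2$ by (Q1), any two distinct points of $Q\cap\mH$ are collinear, so by the second fact above $Q\cap\mH$ is contained in a single line $L$ of $Q$. If $Q\cap\mH$ contains two distinct points $a,b$, then $L=ab$, and since both $Q$ and $\mH$ are subspaces we get $L\subseteq Q\cap\mH$, whence $Q\cap\mH=L$; otherwise $Q\cap\mH$ is a single point, being nonempty by hypothesis.

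I expect the only real subtlety to be keeping track of the three metrics at play, namely making sure that the notions of being at distance $2$ and of being a common neighbour transfer correctly among $\mN$, the quad $Q$, and the hexagon $\mH$; this is exactly where convexity of $Q$ and the full isometric embeddedness of $\mH$ get used. Beyond that the argument is short: in particular one does not need the explicit classification of quads with three points per line, only the nonexistence of triangles in a generalized quadrangle together with the uniqueness of a common neighbour at distance $2$ in a generalized hexagon.
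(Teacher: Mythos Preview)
Your argument is correct and follows essentially the same route as the paper: assume two non-collinear points of $Q\cap\mH$, note they have at least two common neighbours in the generalized quadrangle $Q$ but at most one in the generalized hexagon $\mH$, and derive a contradiction with Corollary~\ref{new2}. The only difference is cosmetic: you spell out why $\mH$ is a subspace and why a clique in $Q$ lies on a line, whereas the paper simply asserts that $Q\cap\mH$ is a (convex) subspace and proceeds.
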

\begin{proof}
Say $Q \cap \mH$ is not a singleton.
Since $Q \cap \mH$ is a subspace, it suffices to show that there are no two non-collinear points in $Q \cap \mH$. 
Let $x$, $y$ be two non-collinear points in $Q \cap \mH$.
Since $Q$ is a non-degenerate generalized quadrangle, there are at least two common neighbors of $x$ and $y$ in $Q$. 
Since points at distance $2$ in $\mH \cong \mathrm{H}(2)^D$ have a unique common neighbor and $Q \cap \mH$ is a convex subspace, at least one of these common neighbors must lie outside $\mH$. 
This gives rise to a point at distance $1$ from $\mH$ with two neighbors ($x$ and $y$) in $\mH$, which contradicts Corollary \ref{new2}.
\end{proof}

\begin{lem}\label{lemHJ:W2}
If $x$ is a point of type $A$ in $\mN$ which is not contained in $\mH$, then there exists a unique $W(2)$-quad $Q$ containing $x$ and its projection $x'$ in $\mH$.
For this quad $Q$ we have:
\begin{enumerate}[$(a)$]
\item $Q$ intersects $\mH$ in a line $M$.
\item Let $M_1$ and $M_2$ be the two lines through $x'$ in $\mH$ other than $M$. Then the two lines through $x$ that are not contained in $Q$ can be labeled $L_1$ and $L_2$ such that $L_1, M_1$ are parallel and at distance $1$ from each other, and $L_2, M_2$ are parallel and at distance $1$ from each other. 
\end{enumerate}
\end{lem}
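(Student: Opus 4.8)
The plan is to extract from each neighbour of $x$ in $\mN$ a line of $\mH$ through $x'$, and then to produce the quad as a Shult--Yanushka quad. Throughout write $x'$ for the projection $\pi_{\mH}(x)$, so by Corollary~\ref{new2} $x'$ is the unique point of $\mH$ collinear with $x$. The key initial observation is that the type~$A$ valuations of $\mH\cong\mathrm{H}(2)^D$ are exactly its classical valuations (there are $63=|\mH|$ of each, with matching parameters $M_f=3$, $|\mO_f|=1$, $|H_f|=31$ and value distribution $[1,6,24,32]$ in Table~\ref{tab1:HD2}). Hence $f_x$ is the classical valuation of $\mH$ centred at $x'$, i.e. $f_x=f_{x'}$, and by Lemma~\ref{lem:linetypes}(3) applied to the line $\{x,x',z\}$ of $\mN$ we get $f_z=f_x\ast f_{x'}=f_{x'}$; thus the third point $z$ of $xx'$ is again of type~$A$ with $\pi_{\mH}(z)=x'$. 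By Lemmas~\ref{lemHJ:AB} and~\ref{lemHJ:noCD} every neighbour of $x$ is of type~$A$, and for a neighbour $p$ of $x$ not on $xx'$ the classical valuation $f_p$ has centre $\pi_{\mH}(p)\ne x'$ (otherwise $x,x',p$ would form a triangle). Since $f_p$ neighbours $f_x=f_{x'}$, the centres $x'$ and $\pi_{\mH}(p)$ are equal or collinear in $\mH$, hence collinear; and if $\{x,p,q\}$ is the line then Lemma~\ref{lem:linetypes}(3) (applied with $\mH$ embedded in itself) gives $f_q=f_{x'}\ast f_{\pi_{\mH}(p)}$, the classical valuation centred at the third point of the $\mH$-line joining $x'$ and $\pi_{\mH}(p)$. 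Thus each of the four lines through $x$ other than $xx'$ \emph{corresponds} to one of the three lines of $\mH$ through $x'$, in the sense that its two points off $\mH$ project onto the two points $\ne x'$ of that $\mH$-line.

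For existence, pigeonhole gives two of these four lines, $\{x,p,q\}$ and $\{x,p',q'\}$, corresponding to the same line $M=\{x',a_1,a_2\}$ of $\mH$, say with $\pi_{\mH}(p)=\pi_{\mH}(p')=a_1$ and $\pi_{\mH}(q)=\pi_{\mH}(q')=a_2$. The non-collinear points $p,p'$ have common neighbours $x$ and $a_1$, so by Shult--Yanushka they lie in a unique quad $Q$; then $x,q,q'\in Q$ and, $q,q'$ being non-collinear, their common neighbours $a_1,a_2$ lie in $Q$ as well. As $a_1\sim a_2$ in $\mH$, the line $M=a_1a_2$ lies in the subspace $Q\cap\mH$, so $x'\in Q$ and, by Lemma~\ref{lemHJ:QuadIntersection}, $Q\cap\mH=M$: this is part~(a). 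To see that $Q$ is a $W(2)$-quad, note that an order-$(2,4)$ quad would force $\mN=Q$, and if $Q$ were a grid then parametrising $Q\cong\mathbb{L}_3\times\mathbb{L}_3$ with $x=(0,0)$, $p=(1,0)$, $p'=(0,1)$ would force $a_1=(1,1)$, $q=(2,0)$, $q'=(0,2)$ and hence $a_2=(2,2)$, giving two non-collinear points of $\mH$ inside $Q$ --- impossible by Lemma~\ref{lemHJ:QuadIntersection}.

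For uniqueness, let $L'$ be the one line through $x'$ other than $xx'$ that leaves $\mH$ (it is unique since $x'$ lies on $3$ lines of $\mH$ and $5$ of $\mN$). If $Q_0$ is any $W(2)$-quad through $x$ and $x'$, then among its three lines through $x'$ one is $xx'$, at most one other leaves $\mH$, and (by Lemma~\ref{lemHJ:QuadIntersection}) at most one lies in $\mH$; so exactly one lies in $\mH$ and the remaining one is $L'$. Hence any two $W(2)$-quads through $x$ and $x'$ share the intersecting lines $xx'$ and $L'$ and are therefore equal, which proves uniqueness and shows that only one line of $\mH$ through $x'$ is ``doubled''. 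For part~(b), the two lines $L_1,L_2$ through $x$ not in $Q$ are the two non-$xx'$ lines through $x$ that are left over, and neither can correspond to $M$ --- otherwise a third line through $x$ corresponding to $M$ would be absorbed into $Q$ by the Shult--Yanushka argument applied to $x$ and $a_1$, contradicting that $Q$ has only three lines through $x$ --- so, after relabelling, $L_i$ corresponds to $M_i$. Writing $M_i=\{x',a,b\}$ and $L_i=\{x,r,s\}$ with $\pi_{\mH}(r)=a$, $\pi_{\mH}(s)=b$: since $r,s$ are of type~$A$ they have unique projections, so each of $x,r,s$ is at distance $1$ from exactly one point of $M_i$ (namely $x',a,b$ respectively), and $\dist(L_i,M_i)=1$ because $L_i\cap\mH=\emptyset$; hence $L_i$ and $M_i$ are parallel at distance $1$.

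The step I expect to be the main obstacle is the bookkeeping in the first paragraph --- translating ``$p$ is a neighbour of $x$'' into the statement that the line $xp$ corresponds to a well-defined line of $\mH$ through $x'$ via the $\ast$-product of classical valuations --- together with the two geometric exclusions: ruling out a grid-quad (which uses the coordinate argument and Lemma~\ref{lemHJ:QuadIntersection}) and ruling out two distinct $W(2)$-quads, where the crucial point is to recognise that any such quad is forced to pass through the unique second line $L'$ through $x'$ that leaves $\mH$.
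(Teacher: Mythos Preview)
Your approach is essentially the same as the paper's: identify the four lines through $x$ off $xx'$ with (not necessarily distinct) lines of $\mH$ through $x'$, use pigeonhole to find two that match the same $\mH$-line $M$, and build the quad via Shult--Yanushka. The paper phrases this in terms of the three $\mV$-lines of type $AAA$ through $f_x$ (Table~\ref{tab2:HD2}) rather than via projections of classical valuations, but this is the same counting. Your uniqueness argument via the unique second line $L'$ through $x'$ leaving $\mH$ is also equivalent to the paper's. One execution difference: the paper applies Shult--Yanushka to $x$ and a point $y\in M$, exhibiting \emph{three} common neighbours ($x'$ and one on each of the two matching lines), which immediately rules out a grid-quad; you instead apply it to $p,p'$, exhibit only two common neighbours, and must then exclude the grid separately.

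That separate exclusion contains a genuine error. In your coordinate argument you obtain $a_1=(1,1)$ and $a_2=(2,2)$ and conclude ``two non-collinear points of $\mH$ inside $Q$ --- impossible by Lemma~\ref{lemHJ:QuadIntersection}''. But $a_1$ and $a_2$ \emph{are} collinear in $\mH$ (and in $\mN$): they both lie on $M=\{x',a_1,a_2\}$ by construction. So Lemma~\ref{lemHJ:QuadIntersection} gives no contradiction here. The actual contradiction is that $a_1,a_2$ are at distance $2$ in the grid $Q$ but at distance $1$ in $\mN$, violating the fact that a quad is isometrically embedded; equivalently, $a_1,a_2\in Q$ and $a_1\sim a_2$ force $M\subseteq Q$ by convexity, hence $x'\in Q$ and the line $xx'$ is a third line through $x$ in $Q$, impossible in a grid. (A cleaner fix, matching the paper: observe that $x$ and $a_1$ already have the three common neighbours $p,p',x'$, so the quad through them cannot be a grid.) There is also a small slip where you call $a_1$ a common neighbour of $q,q'$; it is not, though $a_1\in Q$ is already established.
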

\begin{proof}
Let $x$ be a point of type $A$ outside $\mH$ and let $x'$ be the unique point in $\mH$ collinear with $x$. 
We have $f_x = f_{x'}$. 
Each of the four lines through $x$ which lie outside $\mH$ are of type $AAA$ by Lemmas \ref{lemHJ:AB} and \ref{lemHJ:noCD}. 
From Table~\ref{tab2:HD2} we see that there are only three distinct lines of type $AAA$ through a valuation of type $A$ in the valuation geometry. 
Therefore, there exists two lines $K_1$, $K_2$ through $x$ which lie in $\Gamma_1(\mH)$ and induce the same set of valuations on $\mH$. 
Let $\{f_1, f_2, f_3\}$ be this set with $f_1 = f_x = f_{x'}$.
Since all three lines through $x'$ which lie inside $\mH$ correspond to distinct lines of type $AAA$ in the valuation geometry, at least one of them, say $M$, must induce the set $\{f_1, f_2, f_3\}$ of valuations.

Let $y \neq x'$ be a point on $M$. Then $y$ is collinear with a point on $K_1$ and a point on $K_2$ both of which induce the valuation equal to $f_y$. Therefore, $x$, $y$ are two points at distance $2$ in a near polygon with at least three common neighbors. From the existence result of quads, it follows that $x$ and $y$ lie in a unique quad $Q$. By the classification of quads of order $(2,t)$ and Lemma \ref{lem:quad_order}, $Q$ must be isomorphic to $W(2)$, the unique generalized quadrangle of order $(2,2)$. 
From Lemma \ref{lemHJ:QuadIntersection} it follows that $Q \cap \mH = M$ and hence, for each point on $M$, the two lines through it going out of $\mH$ are contained in $Q$. 
Hence none of the points on $M$ can be contained in a $W(2)$-quad other than $Q$.

Now let $L$ be a line through $x$ not contained in $Q$. 
$L$ necessarily induces a set of valuations other then $\{f_1, f_2, f_3\}$. 
There are only two other possibilities and both of them are induced by lines through $x'$ contained in $\mH$, but not in $Q$. 
Therefore there must be a line $L'$ through $x'$ inducing the same set of valuations as $L$. The correspondence $L \mapsto L'$ between the set of lines through $x$ not contained in $Q$ and the set of lines through $x'$ in $\mH$ distinct from $M$ is a bijection as otherwise there would exist another $W(2)$-quad through the line $xx'$ but we have already proved that there is a unique such quad. 
\end{proof}

\begin{lem}
There is no point in $\mN$ of type $A$ outside $\mH$. 
\end{lem}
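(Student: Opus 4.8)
The plan is to argue by contradiction: assuming $\mN$ has a point of type $A$ outside $\mH$, I would show that $\mH$ must then admit a partition of its point set into lines, which is impossible. Suppose $x$ is a point of $\mN$ of type $A$ not contained in $\mH$. By Corollary \ref{new2} it has a well-defined projection $x' = \pi_\mH(x)$, and by Lemma \ref{lemHJ:W2} there is a unique $W(2)$-quad $Q$ through $x$ and $x'$, with $Q \cap \mH$ a line $M$ of $\mH$ through $x'$. Call a point $p$ of $\mH$ \emph{exposed} if some neighbour of $p$ lying outside $\mH$ has type $A$; thus $x'$ is exposed, so the set $E$ of exposed points is nonempty. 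Note first that, by Lemma \ref{lemHJ:AB} and Corollary \ref{new2}, every point outside $\mH$ collinear with a type-$A$ point outside $\mH$ is itself of type $A$.

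The heart of the argument is to prove that $E$ is closed under taking lines: if $p \in E$, then all three lines of $\mH$ through $p$ lie in $E$. Write $p = \pi_\mH(x)$ for a type-$A$ neighbour $x$ of $p$ outside $\mH$, with quad $Q$ and line $M = Q \cap \mH \ni p$ as above. For the line $M$ itself: given $m \in M$ with $m \neq p$, the points $x$ and $m$ lie at distance $2$ inside the generalized quadrangle $Q$ (the unique point of $M$ collinear with $x$ in $Q$ being $p$), so they have three common neighbours in $Q$; at most one of these lies on $M$, so we may choose one, $z$, in $Q \setminus M$. Then $z \notin \mH$, $z$ is collinear with the type-$A$ point $x$ and hence of type $A$, and $z$ is collinear with $m \in \mH$, so $\pi_\mH(z) = m$ and $m \in E$. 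For the two lines $M_1, M_2$ of $\mH$ through $p$ other than $M$: by Lemma \ref{lemHJ:W2}$(b)$ the two lines $L_1, L_2$ through $x$ not contained in $Q$ are parallel, at distance $1$, to $M_1$ and $M_2$ respectively. In the parallelism of $L_1$ with $M_1$, the point $x$ is matched with the unique point of $M_1$ at distance $1$ from it, which is $p$; the two remaining points $a_1, a_2$ of $L_1$ are then matched with the two remaining points $n_1, n_2$ of $M_1$. Each $a_i$ lies outside $\mH$ (otherwise $a_i = p$ and $L_1 = xp \subseteq Q$, contradicting $L_1 \not\subseteq Q$), is of type $A$ (being collinear with $x$), and satisfies $\pi_\mH(a_i) = n_i$; hence $n_1, n_2 \in E$, so $M_1 \subseteq E$, and likewise $M_2 \subseteq E$.

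Since the collinearity graph of $\mH \cong \mathrm{H}(2)^D$ is connected and $E \neq \emptyset$, this closure property forces $E = \mH$; in particular $\mN$ has no point of type $B$. For each $p \in \mH$, Lemma \ref{lemHJ:W2} then provides a $W(2)$-quad $Q_p$; one checks it does not depend on the chosen type-$A$ neighbour of $p$ (all such neighbours lie on the two lines through $p$ going out of $\mH$, which are contained in $Q_p$), so $M_p := Q_p \cap \mH$ is a well-defined line of $\mH$ through $p$. If two such lines $M_p, M_q$ met in a point $r$, then $Q_p$ and $Q_q$ would both be $W(2)$-quads through $r$ and through a type-$A$ neighbour of $r$, contradicting the uniqueness assertion of Lemma \ref{lemHJ:W2}; hence the lines $M_p$ are pairwise disjoint and partition the $63$ points of $\mH$ into $21$ lines. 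But $\mathrm{H}(2)^D$ admits no such partition (equivalently, $\mathrm{H}(2)$ has no distance-$2$ ovoid), which is the required contradiction.

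The only delicate point I anticipate is this last fact — the non-existence of a line spread of $\mathrm{H}(2)^D$. It is an essentially finite statement that can either be quoted from the literature on generalized hexagons or checked directly (for instance, one verifies that the Hoffman bound $21$ for independent sets of the collinearity graph is met only by distance-$2$ ovoids, which one then rules out). Everything else — the closure property of $E$, the identification of the quads $Q_p$, and their uniqueness — is a routine unwinding of Lemma \ref{lemHJ:W2} together with Lemma \ref{lemHJ:AB} and Corollary \ref{new2}.
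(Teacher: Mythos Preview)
Your argument is correct and takes a genuinely different route from the paper's. The paper works locally: starting from a single type-$A$ point $x$ outside $\mH$, it builds the $W(2)$-quad $Q$ through $x$ and one neighbouring $W(2)$-quad $S$ (through a point $y'$ on a line of $\mH$ adjacent to $Q\cap\mH$), then derives a contradiction by examining how a point $z\in S$ not collinear with $y'$ relates to $Q$ --- invoking the classical/ovoidal dichotomy for point--quad relations and chasing the projection $z'$ of $z$ into $\mH$. Your approach is global: you show that a single exterior type-$A$ point forces \emph{every} point of $\mH$ to be exposed, that the resulting system of $W(2)$-quads is coherent (one quad $Q_p$ per point $p$), and that the lines $M_p=Q_p\cap\mH$ then form a line-spread of $\mathrm{H}(2)^D$, which you rule out by appeal to the known non-existence of such a spread (equivalently, of an ovoid of $\mathrm{H}(2)$).

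What each approach buys: the paper's proof is entirely self-contained, needing nothing beyond the valuation tables and basic quad theory already in play; yours is structurally cleaner and actually proves a stronger intermediate statement (one exterior type-$A$ point forces all exterior points to have type $A$), but it outsources the contradiction to the fact that $\mathrm{H}(2)^D$ admits no spread. That fact is true and documented, but it is not entirely trivial, and your parenthetical sketch (``Hoffman bound $\dots$ distance-$2$ ovoids'') is not quite right as stated: the dual of a spread of $\mathrm{H}(2)^D$ is an \emph{ovoid} of $\mathrm{H}(2)$ in the ordinary sense (a set of $21$ points meeting every line exactly once), not a distance-$2$ ovoid, and the Hoffman-bound equality alone does not immediately rule these out --- you would still need either a short direct argument specific to $\mathrm{H}(2)$ or a citation. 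Everything else in your proof (propagation of exposure along lines via Lemma~\ref{lemHJ:W2}, well-definedness of $Q_p$, and disjointness of the $M_p$) is sound.
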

\begin{proof}
Let $x$ be a point of type $A$ outside $\mH$. By Lemma \ref{lemHJ:W2} it lies in a unique $W(2)$-quad $Q$ which intersects $\mH$ in a line $L$. By 
Lemma~\ref{lemHJ:AB} and Corollary~\ref{new2}, all points of $Q \setminus L$ have type A. Let $x'$ be the projection of $x$ in $\mH$ and $y'$ a neighbor of $x'$ in $\mH$ lying on a line through $x'$ other than $L$. By Lemma~\ref{lemHJ:W2}, there exists a unique neighbor $y$ of $y'$ outside $\mH$ and collinear with $x$. Again by Lemma \ref{lemHJ:AB} and Corollary~\ref{new2}, the point $y$ has type A. So, by Lemma~\ref{lemHJ:W2}, there exists a unique $W(2)$-quad $S$ containing $y$ and $y'$. The $W(2)$-quads $Q$ and $S$ are disjoint.

Suppose $p$ is a neighbor of $x'$ contained in $Q \setminus L$. As $p$ has type A, there exists by Lemma~\ref{lemHJ:W2} a unique line through $p$ disjoint from $\mH$ that is parallel and at distance 1 from the line $x'y'$, implying that there is a common neighbor of $p$ and $y'$ in $S \setminus \mH$. This implies that we can label the two lines of $Q$ through $x'$ distinct from $L = Q \cap \mH$ by $T_1$ and $T_2$ and the two lines of $S$ through $y'$ distinct from $S \cap \mH$ by $U_1$ and $U_2$ such that $T_1,U_1$ are parallel and at distance 1 from each other, and $T_2,U_2$ are parallel and at distance 1 from each other. 

Now, consider a point $z$ in $S$ which is not collinear with $y'$. If $z$ is at distance $1$ from $Q$ (necessarily from a point of type A of $Q \setminus L$), then by Lemma \ref{lemHJ:W2} its projection $z' \in S \cap \mH$ is collinear with a point on the line $L$ in $\mH$, contradicting the fact that $\mH$ is a generalized hexagon.
So $z$ (as well as every point of $S$ non-collinear with $y'$) must be at distance at least $2$ from $Q$. The two lines of $S$ through $y'$ distinct from $S \cap \mH$ are parallel and at distance 1 from a line of $Q$. So, taking the projection of $z$ on these two lines, we see that there are two points in $Q$ at distance $2$ from $z$. It is known that the point $z$ induces a classical or an ovoidal valuation of $Q$ (see e.g. \cite[Thm. 1.22]{bdb-book}). Since there are two points in $Q$ at distance $2$ from $z$, the point $z$ must induce an ovoidal valuation of $Q$. Since there are five points in an ovoid in $Q$ ($\cong W(2)$), each of the five lines through $z$ must contain a (necessarily unique) point at distance $1$ from $Q$. Thus the projection of $z$ on $\mH$, $z'$, must be collinear with a point in $Q$. Now, all the lines through $z'$ are either in $\mH$ or in $S$. Since $S$ and $Q$ are disjoint and $\mH$ is a generalized hexagon, we have a contradiction.
\end{proof}

\noindent
Therefore, $\mN$ has the following description:
\begin{itemize}
\item each point of $\mN$ is at distance at most $1$ from $\mH$;

\item each point of $\mN$ that lies in $\mH$ induces a valuation of type $A$, and

\item each point of $\mN$ that does not lie in $\mH$ induces a valuation of type $B$. 
\end{itemize}
Obviously, distinct points of $\mN$ induce distinct type A valuations. To prove that $\mN$ is isomorphic to $\mV_{A, B}$, we first show that no two points in $\mN$ can induce the same type $B$ valuation of $\mH$, which will give us a bijection between the point sets of these geometries.

\begin{lem}\label{lemHJ:B1}
If $g_1$ and $g_2$ are two distinct valuations of type $B$ collinear to each other in the valuation geometry, then we have $|\{x \in \mN \setminus \mH : f_x = g_1\}| = |\{x \in \mN \setminus \mH: f_x = g_2 \}|$
\end{lem}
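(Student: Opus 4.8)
The plan is to compare the two fibre sizes by a double count of the edges of the collinearity graph of $\mN$ that join a point inducing $g_1$ to a point inducing $g_2$. Recall from the results already established in this section that every point of $\mN$ lying in $\mH$ induces a valuation of type $A$, while every point of $\mN$ not lying in $\mH$ induces a valuation of type $B$; as $g_1$ and $g_2$ both have type $B$, every point inducing $g_1$ or $g_2$ lies outside $\mH$. Let $\mathcal{E}$ be the set of ordered pairs $(x,y)$ of collinear points of $\mN$ with $f_x = g_1$ and $f_y = g_2$. I will show that each point $x$ of $\mN$ with $f_x = g_1$ has exactly one neighbour $y$ with $f_y = g_2$, and, by the symmetric statement, each point $y$ with $f_y = g_2$ has exactly one neighbour $x$ with $f_x = g_1$. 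Counting $|\mathcal{E}|$ in the two ways then yields $|\{x \in \mN \setminus \mH : f_x = g_1\}| = |\mathcal{E}| = |\{y \in \mN \setminus \mH : f_y = g_2\}|$, which is the assertion of the lemma.

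So let $x$ be a point of $\mN$ with $f_x = g_1$ (if there is none, the claim holds vacuously). By Lemma \ref{lemHJ:B}, $x$ has a unique neighbour $x'$ in $\mH$, and its remaining nine neighbours lie outside $\mH$ and induce nine pairwise distinct valuations of type $B$. Granting for the moment that each of these nine valuations differs from $f_x = g_1$, each of them is collinear with $g_1$ in the valuation geometry $\mV$, by Lemma \ref{lem:linetypes} applied to the line of $\mN$ joining $x$ to that neighbour (which then induces a genuine $\mV$-line through $g_1$). On the other hand, reading off Table \ref{tab2:HD2}, a fixed valuation of type $B$ lies on a unique $\mV$-line of type $ABB$ and on exactly four $\mV$-lines of type $BBB$, and on no further $\mV$-line containing another type-$B$ valuation; since two valuations lie on at most one common $\mV$-line, there are therefore exactly $1 + 4 \cdot 2 = 9$ valuations of type $B$ collinear with $g_1$ in $\mV$. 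Hence the nine valuations induced by the outside neighbours of $x$ are precisely all the type-$B$ valuations collinear with $g_1$; as $g_2$ is of type $B$ and collinear with $g_1$, it occurs among them, and by distinctness it is the valuation of exactly one neighbour of $x$.

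It remains to prove the claim that no outside neighbour $y$ of $x$ satisfies $f_y = f_x$; I expect this to be the only genuinely delicate point. By Corollary \ref{new2} we have $\mO_{f_x} = \{x'\}$, where $x'$ is the unique point of $\mH$ collinear with $x$. If an outside neighbour $y$ of $x$ had $f_y = f_x$, then $\mO_{f_y} = \{x'\}$ as well, so $x'$ would also be the unique point of $\mH$ collinear with $y$; in particular $x$, $y$ and $x'$ would be pairwise collinear. A near polygon contains no triangle whose three vertices fail to lie on a common line — indeed, applying (NP2) to the line through two of the vertices and the third vertex exhibits two distinct points of that line at distance $1$ from the third vertex, contradicting the uniqueness in (NP2). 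Hence $y$ would lie on the line $xx'$. But then $f_y = f_x \ast f_{x'}$ by Lemma \ref{lem:linetypes}(3), and since $f_{x'}$ has type $A$ whereas $f_x$ has type $B$ we have $f_{x'} \neq f_x$, so $f_y \neq f_x$ by the same part of Lemma \ref{lem:linetypes} — a contradiction. This establishes the claim; applying the whole argument with the roles of $g_1$ and $g_2$ interchanged completes the double count and the proof.
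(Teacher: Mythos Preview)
Your proof is correct and follows essentially the same idea as the paper's: both establish a bijection between the fibres over $g_1$ and $g_2$ by showing that each point $x$ with $f_x=g_1$ has exactly one neighbour inducing $g_2$. The paper splits into the cases where $\{g_1,g_2\}$ lies on a $\mV$-line of type $ABB$ (handled via the line $x\pi(x)$) or of type $BBB$ (handled via the four lines through $x$ disjoint from $\mH$), whereas you treat all nine outside neighbours simultaneously and match them by a counting argument against the $1+4\cdot 2=9$ type-$B$ valuations collinear with $g_1$ in $\mV$; this is a mild repackaging rather than a different method. Your extra verification that no outside neighbour $y$ of $x$ satisfies $f_y=f_x$ is a legitimate point of care, since Lemma~\ref{lemHJ:B} as stated only guarantees that the nine induced valuations are mutually distinct, not that they differ from $f_x$ itself.
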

\begin{proof}
Let $g_1$, $g_2$ be two such valuations of type $B$. Say a point $y$ in $\mN$ induces the valuation $g_1$ of $\mH$. If $g_1$ and $g_2$ lie on $\mV$-line of type $ABB$, then the third point on the line joining $y$ and the unique neighbor of $y$ that lies in $\mH$ induces the valuation $g_2$, giving us a bijection between $\{x \in \mN \setminus \mH : f_x = g_1\}$ and $\{x \in \mN \setminus \mH: f_x = g_2 \}$.
So, say $g_1$ and $g_2$ lie on $\mV$-line of type $BBB$.
Then by Lemma~\ref{lemHJ:B} we know that each of the four lines through $y$ which do not intersect $\mH$ must induce distinct $\mV$-lines of type $BBB$. 
But we know from Table~\ref{tab2:HD2} that there are exactly four such $\mV$-lines containing $g_1$, and hence $g_2$ is contained in exactly one of them. 
Therefore, there must be precisely one neighbor of $y$ in $\mN \setminus \mH$ which induces the valuation $g_2$. 
This gives a bijection between $\{x \in \mN \setminus \mH: f_x = g_1\}$ and $\{x \in \mN \setminus \mH: f_x = g_2\}$. 
\end{proof}

\begin{lem}
\label{lemHJ:B2}
\label{lemHJ:connected}
The subgeometry of $\mV$ defined on the type $B$ valuations by the lines of type $ABB$ and $BBB$ is connected.
\end{lem}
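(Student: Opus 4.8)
The plan is to derive connectivity from the action of the automorphism group on the valuation geometry $\mV$ of $\mathrm{H}(2)^D$, together with the incidence data already recorded in Table~\ref{tab2:HD2}. Write $G := \mathrm{Aut}(\mathrm{H}(2)^D) \cong G_2(2) \cong U_3(3){:}2$; it acts on $\mV$ preserving the type of each valuation and of each $\mV$-line, and indeed — since the type labels were defined precisely to be the $G$-orbits — the $252$ valuations of type $B$ form a single $G$-orbit. Let $\Gamma_B$ be the graph on this orbit in which two type-$B$ valuations are adjacent whenever they lie on a common $\mV$-line of type $ABB$ or $BBB$ (these are the only $\mV$-line types containing two type-$B$ valuations). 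Then $\Gamma_B$ is $G$-invariant, so the partition of its vertex set into connected components is a $G$-invariant partition of the orbit of size $252$, i.e.\ a system of blocks of imprimitivity for the transitive action of $G$ on the type-$B$ valuations.

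The first thing I would try is to show that this action is primitive — equivalently, that the stabilizer of a type-$B$ valuation, a subgroup of order $|G|/252 = 48$, is maximal in $G$. Granting primitivity, the only $G$-invariant partitions of the orbit are the two trivial ones, so $\Gamma_B$ is either edgeless or connected; and Table~\ref{tab2:HD2} records one $\mV$-line of type $ABB$ and four of type $BBB$ through each type-$B$ valuation, so $\Gamma_B$ has edges and must therefore be connected. This would give the lemma with essentially no computation beyond reading off Table~\ref{tab2:HD2}.

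The main obstacle is that I am not certain the action is primitive; if it is not, the components of $\Gamma_B$ need only be unions of nontrivial blocks, and the argument above does not conclude on its own. The safe route — and the one consistent with the way the remaining valuation-geometry lemmas in this paper are handled — is then a direct verification. Using the explicit model of $\mathrm{H}(2)^D$ and the algorithm of \cite{ab-bdb:1} that computes its valuation geometry, one enumerates the $252$ type-$B$ valuations together with all $ABB$- and $BBB$-lines on which they lie, builds $\Gamma_B$ explicitly, and checks that a breadth-first search from a single type-$B$ valuation reaches all $252$ of them; this is carried out by the accompanying \textsf{GAP} code of \cite{ab-bdb:code}.

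Finally, I would note why only the statement matters downstream: combined with Lemma~\ref{lemHJ:B1}, which shows that the quantity $|\{x \in \mN \setminus \mH : f_x = g\}|$ takes the same value at the two endpoints of any edge of $\Gamma_B$, connectivity of $\Gamma_B$ forces this quantity to be constant over all type-$B$ valuations $g$ of $\mH$, which is exactly what is needed to set up the bijection between the point set of $\mN$ and that of $\mV_{A,B}$.
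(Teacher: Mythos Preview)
Your fallback route --- the direct computer verification via \cite{ab-bdb:code} --- is exactly the paper's proof, so in that sense the proposal is correct and matches the paper.

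The primitivity shortcut, however, does not go through. The action of $G$ on the $252$ type-$B$ valuations is genuinely imprimitive: by Table~\ref{tab2:HD2} each type-$B$ valuation lies on a \emph{unique} $\mV$-line of type $ABB$, so the map sending a type-$B$ valuation to that line is $G$-equivariant, and the $126$ pairs of type-$B$ valuations sharing an $ABB$-line form a nontrivial system of blocks of size $2$. (There is in fact also a block system of size $4$: the two $ABB$-lines through a fixed type-$A$ valuation carry four type-$B$ valuations, and these quadruples partition the $252$.) So the stabilizer of order $48$ is not maximal in $G_2(2)$, and the dichotomy ``edgeless or connected'' fails. One could try to rescue the group-theoretic argument by showing that $\Gamma_B$ has edges crossing between distinct blocks of every nontrivial block system, but carrying this out cleanly requires knowing the full lattice of blocks, which is not appreciably easier than the direct check. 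Your instinct to retreat to the computation is therefore the right call.
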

\begin{proof}
This is checked by computer computation, see \cite{ab-bdb:code}.
\end{proof}

\begin{cor}\label{corHJ:BOnce}
For each type $B$ valuation $f$ of $\mH$, there exists exactly one point $x \in \mN$ with $f_x = f$.
\end{cor}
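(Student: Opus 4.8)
The plan is to combine the two preceding lemmas with a global count of the edges between $\mH$ and $\mN \setminus \mH$. Recall from the description of $\mN$ given just before Lemma~\ref{lemHJ:B1} that every point of $\mN$ lies at distance at most $1$ from $\mH$, that the points lying in $\mH$ induce type $A$ valuations, and that the points lying outside $\mH$ induce type $B$ valuations; moreover by Corollary~\ref{new2} each point of $\mN \setminus \mH$ has a unique neighbour in $\mH$. Consequently the set $\mN \setminus \mH$ is partitioned into the fibres of the map $x \mapsto f_x$, each such fibre sitting over a type $B$ valuation of $\mH$.

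The first step is to show that every fibre has the same size $k$, and that $k \geq 1$. Since $\mN$ has diameter $4$ whereas $\mH$ has diameter $3$, we have $\mN \neq \mH$, so $\mN \setminus \mH$ is nonempty and there is a type $B$ valuation $g_0$ with $|\{x \in \mN \setminus \mH : f_x = g_0\}| \geq 1$. Lemma~\ref{lemHJ:B1} shows that the integer $|\{x \in \mN \setminus \mH : f_x = g\}|$ is invariant when $g$ is replaced by any type $B$ valuation joined to it by a $\mV$-line of type $ABB$ or $BBB$; by the connectedness established in Lemma~\ref{lemHJ:connected} this invariance propagates from $g_0$ to all $252$ type $B$ valuations. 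Hence there is an integer $k \geq 1$ with $|\{x \in \mN \setminus \mH : f_x = g\}| = k$ for every type $B$ valuation $g$, and in particular $|\mN \setminus \mH| = 252\,k$.

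The second step is an independent computation of $|\mN \setminus \mH|$. Fix a point $x'$ of $\mH$. As $\mH$ is a full, isometrically embedded subgeometry, a line of $\mN$ through $x'$ carrying a second point of $\mH$ is already a line of $\mH$ (two collinear points lie on a unique line of the partial linear space $\mN$, and that line lies in $\mH$ by isometry); so exactly $3$ of the $5$ lines of $\mN$ through $x'$ lie in $\mH$ and the other $2$ meet $\mH$ only in $x'$, giving $x'$ precisely $4$ neighbours in $\mN \setminus \mH$. Summing over the $63$ points of $\mH$ yields $63 \cdot 4 = 252$ edges between $\mH$ and $\mN \setminus \mH$, while by Corollary~\ref{new2} each point of $\mN \setminus \mH$ accounts for exactly one of these edges; therefore $|\mN \setminus \mH| = 252$. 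Combining with $|\mN \setminus \mH| = 252\,k$ we get $k = 1$, which is precisely the statement of the corollary.

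I expect the genuinely hard content to have already been absorbed by the computer verification in Lemma~\ref{lemHJ:connected}; the only delicate points left are to make sure the propagation argument reaches \emph{all} type $B$ valuations (not merely those in the image of $x \mapsto f_x$), which is exactly what connectedness delivers, and the bookkeeping in the edge count, which is routine once the full and isometric embedding is used to pin down the lines through $x'$.
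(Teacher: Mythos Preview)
Your proof is correct and follows essentially the same approach as the paper: both combine Lemmas~\ref{lemHJ:B1} and~\ref{lemHJ:connected} to show that all type $B$ fibres have a common size, then compute $|\mN \setminus \mH| = 4 \cdot 63 = 252$ by counting edges between $\mH$ and its complement, and compare with the $252$ type $B$ valuations from Table~\ref{tab1:HD2}. Your separate verification that $k \geq 1$ via the diameter discrepancy is a harmless extra step (once $252k = 252$ is established it is automatic), and your explicit justification that exactly three lines through each $x' \in \mH$ lie in $\mH$ spells out what the paper takes for granted.
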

\begin{proof}
Since each point of $\m H$ is collinear with precisely four points of $\mN \setminus \mH$, and each point of $\mN \setminus \mH$ has a unique neighbor in $\mH$, we have $|\mN \setminus \mH| = 4 \times |\mH| = 252$.
By Lemmas \ref{lemHJ:B1} and \ref{lemHJ:B2} we know that for every pair of type B valuations there exist equally many points in $\mN$ which induce those valuations. But from Table~\ref{tab1:HD2} we can see that there are exactly $252$ valuations of type $B$. Therefore each type $B$ valuation is induced exactly once. 
\end{proof}

Now we can prove that $\mN$ is isomorphic to $\mV_{A,B}$ as follows. Map every point $x$ of $\mN$ to the valuation of type $T \in \{A, B\}$ that it induces. Since no two points of $\mN$ induce the same valuation (see Corollary \ref{corHJ:BOnce}), for every line $L = \{x, y, z\}$ of $\mN$ the triple $\{f_x, f_y, f_z\}$ is a $\mV$-line. Map every line of $\mN$ to this corresponding line of $\mV_{A, B}$. Since $\mN$ and $\mV_{A,B}$ have the same number of points and the same order $(2,4)$ the above maps between the point and line sets of $\mN$ and $\mV_{A,B}$ are bijections and define an isomorphism between the two geometries. Thus, every near polygon of order $(2,4)$ that contains an isometrically embedded generalized hexagon $\mH$ isomorphic to $\mathrm{H}(2)^D$ must be isomorphic to $\mV_{A, B}$, which proves Theorem \ref{thm:HJ}.

\section{Characterization of  the $G_2(4)$ near octagon}
\label{sec:G24}

For this section let $\mN$ be a near octagon with three points on each line containing a suboctagon $\mH$ isomorphic to $\HJ$ isometrically embedded in it. 
The valuation geometry $\mV$ of $\mH \cong \HJ$ is given in Tables \ref{tab1:HJ} and \ref{tab2:HJ}. 
The main purpose of this section is to show that if $\mN$ has order $(2,10)$, then $\mN$ is isomorphic to the $G_2(4)$ near octagon. 
In \cite[Appendix]{ab-bdb:2} it was shown that the $G_2(4)$ near octagon can be constructed by taking the valuations of type $A$, $B$ and $C$ as points and the $\mV$-lines of type $AAA$, $ABB$, $ACC$, $BBC$ and $CCC$ as lines. 
Therefore, we will show that if $\mN$ has order $(2,10)$, then it consists of points of type $A$, $B$ or $C$ and lines of type $AAA, ABB, ACC, BBC$ or $CCC$, with each type occurring exactly once. 
First we derive some general results that are true for any near octagon $\mN$ with three points on each line that contains $\mH$ as a full isometrically embedded subgeometry and later restrict ourselves to the case when $\mN$ has order $(2,10)$. 
Lemmas \ref{lem:ValBC} to \ref{lemG:connected} are proved using the computer model of the valuation geometry that we have constructed, see \cite{ab-bdb:code}.
\begin{table}[!htbp]
\begin{center}
\begin{tabular}{|c||c|c|c|c|c|}
\hline
Type & \# & $M_f$ & $|\mathcal{O}_f|$ & value distribution \\
\hline
\hline
$A$ & $315$ & $4$ & $1$ & $[1,10, 80, 160, 64]$ \\
\hline 
$B$ & $630$ & $3$ & $1$ & $[1, 10, 112, 192, 0]$ \\
\hline 
$C$ & $3150$ & $3$ & $1$ & $[1, 26, 128, 160, 0]$ \\
\hline 
$D$ & $1008$ & $2$ & $5$ & $[5, 110, 200, 0, 0]$ \\
\hline 
$E$ & $2016$ & $2$ & $25$ & $[25, 130, 160, 0, 0]$ \\
\hline 
\end{tabular}
\end{center}
\caption{The valuations of Hall-Janko near octagon $\HJ$}
\label{tab1:HJ}
\end{table}

\begin{table}[!htbp]
\begin{center}
\begin{tabular}{|c||c|c|c|c|c|}
\hline
Type & $A$ & $B$ & $C$ & $D$ & $E$ \\
\hline
\hline
$AAA$ & $5$ & -- & -- & -- & -- \\
\hline
$ABB$ & $1$ & $1$ & -- & -- & -- \\
\hline
$ACC$ & $5$ & -- & $1$ & -- & -- \\
\hline
$BBB$ & -- & $5$ & -- & -- & -- \\
\hline
$BBC$ & -- & $10$ & $1$ & -- & -- \\
\hline
$CCC$ & -- & -- & $9$ & -- & -- \\
\hline
$CDD$ & -- & -- & $4$ & $25$ & -- \\
\hline
$DDD$ & -- & -- & -- & $6$ & -- \\
\hline
$DEE$ & -- & -- & -- & $1$ & $1$ \\
\hline
$EEE$ & -- & -- & -- & -- & $6$ \\
\hline 
\end{tabular}
\end{center}
\caption{The lines of the valuation geometry $\mathcal{V}$ of $\HJ$}
\label{tab2:HJ}
\end{table}

\begin{lem}
\label{lem:ValBC}
Let $f$ be a valuation of type $C$ and let $g \not= f$ and $h \not= f$ be valuations of type $B$ or $C$ lying on distinct $\mV$-lines through $f$. Then $g$ and $h$ are non-collinear.   
\end{lem}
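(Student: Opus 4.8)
The statement is purely a property of the valuation geometry $\mV$ of $\HJ$, which is an explicit finite partial linear space on $315+630+3150+1008+2016 = 7119$ points and has been constructed in the file \texttt{Suz2.g} of \cite{ab-bdb:code}. So the plan is to reduce the claim to a small mechanical check on $\mV$ and carry that check out by computer, as announced for Lemmas \ref{lem:ValBC}--\ref{lemG:connected}.

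First I would describe the configuration combinatorially using Table \ref{tab2:HJ}. A type-$C$ valuation $f$ lies on exactly fifteen $\mV$-lines: one of type $ACC$, one of type $BBC$, nine of type $CCC$, and four of type $CDD$. Hence the valuations $g\neq f$ of type $B$ or $C$ lying on a $\mV$-line through $f$ are precisely the one type-$C$ valuation on the $ACC$-line, the two type-$B$ valuations on the $BBC$-line, and the eighteen type-$C$ valuations on the nine $CCC$-lines: twenty-one in total, distributed over eleven of the fifteen $\mV$-lines through $f$. In particular the two type-$B$ valuations among them lie on a single $\mV$-line through $f$, so any two of these twenty-one valuations lying on \emph{distinct} $\mV$-lines through $f$ cannot both be of type $B$; at least one of $g,h$ is of type $C$. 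Consulting Table \ref{tab2:HJ} once more, the $\mV$-lines containing two points both of type $B$ or $C$ are exactly those of type $ABB$, $BBB$, $BBC$, $ACC$ and $CCC$, and of these only $BBC$, $ACC$ and $CCC$ contain such a pair with at least one point of type $C$ (the $CDD$-lines are irrelevant, since such a line has only one point of type $C$). Thus it suffices to verify that no pair $(g,h)$ as above lies on a common $\mV$-line of type $BBC$, $ACC$ or $CCC$; equivalently, using the $\ast$-operation of Section \ref{sec:Prelim}, that $g$ and $h$ are not neighboring valuations with $\{g,h,g\ast h\}$ a $\mV$-line.

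This last assertion is a finite verification inside $\mV$. If one wants to minimise the computation one first checks (again by computer) that $\mathrm{Aut}(\HJ)\cong J_2{:}2$ is transitive on the $3150$ valuations of type $C$, after which it is enough to run the test for one fixed $f$ and the at most $\binom{21}{2}=210$ pairs among the twenty-one valuations listed above. I do not expect any genuine mathematical obstacle here: the only points requiring care are that the computer model of $\mV$ faithfully implements the neighboring relation and the $\ast$-operation, and that the enumeration of $\mV$-line types through a type-$C$ valuation agrees with Table \ref{tab2:HJ}. Granting this, the lemma follows; the explicit computation is provided in \cite{ab-bdb:code}.
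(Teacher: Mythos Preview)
Your proposal is correct and matches the paper's approach: the paper states outright that Lemmas \ref{lem:ValBC}--\ref{lemG:connected} are verified by computer using the model of $\mV$ in \cite{ab-bdb:code}, with no further argument given. Your extra combinatorial analysis from Table \ref{tab2:HJ} (reducing to a single representative $f$ and at most $\binom{21}{2}$ pairs) is sound and merely streamlines the mechanical check; just note that among those $210$ pairs the ten lying on a common $\mV$-line through $f$ are trivially collinear and must be excluded before testing, consistent with the hypothesis that $g$ and $h$ lie on \emph{distinct} $\mV$-lines through $f$.
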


\begin{lem}
\label{lem:ValuationGeometryHJ}
\label{lem:ValB}
Let $f$ be a valuation of type $B$ and let $x \in \mH$ be the unique point in $\mO_f$. Then the map $\{f, g, h\} \mapsto \mO_f \cup \mO_g \cup \mO_h$ is a bijection between the set of five $\mV$-lines of type $BBB$ through $f$ and the set of five lines of $\mH$ through $x$. 
\end{lem}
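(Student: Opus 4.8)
The plan is to treat Lemma~\ref{lem:ValB} as a finite verification inside the explicitly computed valuation geometry $\mV$ of $\HJ$, after first isolating exactly what needs to be checked. Observe that the two sets in the asserted bijection have the same size: Table~\ref{tab2:HJ} shows there are precisely five $\mV$-lines of type $BBB$ through a valuation of type $B$, and since $\HJ$ has order $(2,4)$ there are precisely $t+1=5$ lines of $\mH$ through the point $x = \mO_f$. So it is enough to establish two things: \emph{(i) well-definedness} — for every $\mV$-line $\{f,g,h\}$ of type $BBB$, the set $\mO_f\cup\mO_g\cup\mO_h$ is a line of $\mH$ and it passes through $x$; and \emph{(ii) injectivity} — distinct $BBB$-lines through $f$ yield distinct lines of $\mH$. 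Surjectivity then follows automatically by counting. Note that for type-$B$ valuations each of $\mO_f,\mO_g,\mO_h$ is a singleton (Table~\ref{tab1:HJ}), so the real content of (i) is that these three singletons are pairwise distinct (they need not be a priori, even though $f,g,h$ are), are collinear in $\mH$, exhaust a line, and that one of them is $x$.

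One could try to prove (i) conceptually by writing $h = f\ast g$ and analysing how the zero-set transforms under the $\ast$-operation for a triple of type-$B$ valuations: from the common value distribution $[1,10,112,192,0]$ one would pin down the shift parameter $\epsilon\in\{-1,0,1\}$ for such a triple, and then read off from the definition of $f_3'$ where the new minimum $0$ is attained, hoping to identify $\mO_h$ as the third point of the line $x\,\mO_g$. The cleaner route, however — and the one consistent with how Lemmas~\ref{lem:ValBC} through \ref{lemG:connected} are treated in this paper — is to verify (i) and (ii) directly in the computed model. Since $\mathrm{Aut}(\HJ)$ permutes the type-$B$ valuations with the orbit structure recorded in the computation, it suffices to fix one representative $f$ per orbit, list its five $\mV$-lines of type $BBB$, read off the (singleton) sets $\mO_f,\mO_g,\mO_h$ on each, and confirm that the five resulting triples are exactly the five lines of $\mH$ incident with $x$. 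This is what the GAP code of \cite{ab-bdb:code} (file \texttt{Suz2.g}, built by the same algorithm that produced the valuation geometry of $\mathrm{H}(2)^D$ in \cite{ab-bdb:1}) carries out.

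Consequently, I expect there to be no genuine mathematical obstacle; the only point requiring care is the \emph{completeness and correctness of the enumeration} — that every type-$B$ valuation of $\HJ$ and every $\mV$-line among them has been found — which is precisely what the valuation-geometry computation guarantees. Given that data, the lemma reduces to a short check: read the three zero-points off each $BBB$-line through $f$ and match the resulting five triples against the five lines of $\HJ$ through $\mO_f$.
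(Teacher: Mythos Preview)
Your proposal is correct and matches the paper's own treatment: Lemmas~\ref{lem:ValBC}--\ref{lemG:connected} are all stated to be verified by computer in the model of the valuation geometry (see the sentence preceding Lemma~\ref{lem:ValBC} and the code in \cite{ab-bdb:code}), with no additional argument given. Your breakdown into well-definedness and injectivity, plus the reduction to a single orbit representative via $\mathrm{Aut}(\HJ)$, is exactly the structure such a verification would take, and your observation that both sides have size five (from Table~\ref{tab2:HJ} and the order $(2,4)$ of $\HJ$) is the one non-computational ingredient worth recording.
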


\begin{lem}
\label{lem:ValC}
Let $f$ be a valuation of type $C$. Then there is a unique $\mV$-line $\{f, g, h\}$ of type $CCC$ through $f$ such that $\mO_f \cup \mO_g \cup \mO_h$ is a line of $\HJ$. For every other $\mV$-line $\{ f,g',h' \}$ of type $CCC$ through $f$, the set $\mO_f \cup \mO_{g'} \cup \mO_{h'}$ is a set of three pairwise non-collinear points.
\end{lem}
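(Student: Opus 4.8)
The plan is to extract the bulk of the statement by a direct analysis of the $\ast$-operation together with the numerical data in Table~\ref{tab1:HJ}, and to settle the one remaining arithmetic assertion using the explicit valuation geometry of $\HJ$ (constructed as in \cite{ab-bdb:1}; see also \cite{ab-bdb:code}).

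First I would fix a $\mV$-line $\{f,g,h\}$ of type $CCC$, say $h=f\ast g$, and write $\mO_f=\{x\}$, $\mO_g=\{y\}$, $\mO_h=\{z\}$; these sets are singletons since type $C$ valuations have $|\mO_f|=1$. I would first show that $x,y,z$ are pairwise distinct and that the matrix of values $\phi(p)$, for $\phi\in\{f,g,h\}$ and $p\in\{x,y,z\}$, is the all-ones $3\times 3$ matrix minus the identity, so that each of $f,g,h$ is $0$ on its own $\mO$-point and $1$ on the other two. For $x\neq y$: if $\mO_f=\mO_g$, then since $f$ and $g$ have the same value distribution we get $\sum_p f(p)=\sum_p g(p)$, which rules out $\epsilon=\pm 1$ for the neighboring pair $f,g$; with $\epsilon=0$ a routine computation with the $\ast$-operation then shows that $h$ cannot be of type $C$ unless $f=g$ (its set of maximal-value points would be too large), while $h$ of any other type is incompatible with $\{f,g,h\}$ being a $\mV$-line of type $CCC$ by Table~\ref{tab2:HJ}, a contradiction. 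Once $x\neq y$, the inequalities $f(y)\ge 1$ and $g(x)\ge 1$ force $\epsilon=0$ and then $f(y)=g(x)=1$; running $\ast$ once more, the only way $h$ can attain its required maximum value $3$ with $|\mO_h|=1$ is to have $f(z)=g(z)=1$, with $z$ the unique common value-$1$ point of $f$ and $g$. The symmetry of $\ast$ on the triple $\{f,g,h\}$ then supplies the remaining entries of the matrix and in particular shows $z\neq x,y$.

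Next I would establish the dichotomy. Suppose two of the three points are collinear in $\HJ$, say $x$ and $y$, and write $xy=\{x,y,w\}$. Since $0$ is the global minimum of $f$ and of $g$, the defining property of a valuation applied to the line $xy$ yields $f(w)=g(w)=1$; together with $h(x)=h(y)=1$ from the matrix, the only value of $h(w)$ compatible with the valuation axiom on the line $\{x,y,w\}$ is $h(w)=0$, so $w=z$ and $\mO_f\cup\mO_g\cup\mO_h$ is precisely the line $xy$. Hence as soon as two of the three $\mO$-points are collinear, all three of them form a line of $\HJ$; otherwise no two of them are collinear. This proves the ``type'' content of the lemma and reduces the rest to a counting assertion: among the nine $\mV$-lines of type $CCC$ through $f$ (the entry in row $CCC$ and column $C$ of Table~\ref{tab2:HJ}), exactly one has its triple of $\mO$-points forming a line.

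For this count I would use the computer model of the valuation geometry of $\HJ$ from \cite{ab-bdb:code}. Choosing a representative type $C$ valuation (one suffices, the action of $\mathrm{Aut}(\HJ)\cong J_2{:}2$ on the $3150$ type $C$ valuations being transitive, as one checks, with point stabilizer of order $384$), one lists the nine $CCC$-lines through it, computes each triple $\mO_f,\mO_g,\mO_h$ via the $\ast$-operation, and checks that exactly one triple consists of three mutually collinear points of $\HJ$, while on the other eight the three points are mutually at distance $\ge 2$. Combined with the previous two paragraphs, this proves the lemma. I expect the counting step to be the genuine obstacle: the data in Tables~\ref{tab1:HJ}--\ref{tab2:HJ} do not by themselves explain why a type $C$ valuation should single out exactly one of the five lines of $\HJ$ through its $\mO$-point, and a purely synthetic proof would apparently require an intrinsic description of the type $C$ valuations --- for instance in terms of the $315\cdot 10 = 3150$ ordered pairs of collinear points of $\HJ$, the distinguished line then being the one such a pair spans. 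Absent such a description, I would, like the authors, settle this last point by computation.
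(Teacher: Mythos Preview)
Your argument is correct, and it goes well beyond what the paper actually does. In the paper, Lemma~\ref{lem:ValC} (together with Lemmas~\ref{lem:ValBC}, \ref{lem:ValB} and \ref{lemG:connected}) is proved entirely by computer verification in the explicitly constructed valuation geometry; no synthetic reasoning about the $\ast$-operation is offered. Your proposal, by contrast, isolates and proves by hand the structural content of the lemma --- that the three $\mO$-points on a $CCC$ line are distinct, that the $3\times 3$ value matrix is $J-I$, and the dichotomy that two collinear $\mO$-points force the third to complete the line --- and reserves the computation only for the final ``exactly one out of nine'' count. This is a genuine improvement in explanatory value: your argument shows \emph{why} the two kinds of $CCC$ lines are the only possibilities, whereas the paper simply reports that this is so. One small remark: in the case $\mO_f=\mO_g$, your phrase ``its set of maximal-value points would be too large'' slightly undersells what happens; more precisely, forcing $M_h=3$ kills all points with $\{f(p),g(p)\}=\{2,3\}$, so $f^{-1}(3)=g^{-1}(3)$ already contributes $160$ points of $h$-value $3$, and matching the type~$C$ count then eliminates points with $\{f(p),g(p)\}=\{1,2\}$ as well, whence $f=g$. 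Your closing remark --- that a bijection between the $3150$ type~$C$ valuations and ordered collinear pairs in $\HJ$ would likely make the count synthetic --- is also on the mark, and in fact echoes the authors' own remark that Lemmas~\ref{lem:ValBC} and \ref{lem:ValC} could alternatively be verified by geometric reasoning inside the $G_2(4)$ near octagon.
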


A $\mV$-line $\{ f,g,h \}$ of type CCC will be called {\em special} if $\mO_f \cup \mO_g \cup \mO_h$ is a line of $\HJ$. If that is not the case, then $\{ f,g,h \}$ will be called an {\em ordinary} $\mV$-line. This concept of \textit{special} and \textit{ordinary} is then extended to the lines of $\mN$ that induce $\mV$-lines of type $CCC$. 

\begin{lem}
\label{lemG:connected}
The subgeometry of $\mV$ defined on the type $C$ valuations by the lines of type $ACC$ and the ordinary  lines of type $CCC$ is connected.
\end{lem}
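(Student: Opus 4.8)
The plan is to settle this by a finite check on the explicit model of the valuation geometry $\mV$ of $\HJ$ that underlies Tables~\ref{tab1:HJ} and~\ref{tab2:HJ} and the proofs of Lemmas~\ref{lem:ValBC}--\ref{lem:ValC}, namely the GAP model of \cite{ab-bdb:code}. Concretely, I would first compute the full list of valuations of $\HJ$ with the algorithm of \cite{ab-bdb:1} (applied here to $\HJ$ instead of $\mathrm{H}(2)^D$), sort these valuations into the five isomorphism types $A,\dots,E$ using the invariants $M_f$, $|\mO_f|$ and the value distribution recorded in Table~\ref{tab1:HJ}, and list all $\mV$-lines together with their types. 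This produces in particular the $3150$ valuations of type $C$, given as explicit integer-valued functions on the $315$ points of $\HJ$, together with the $\mV$-lines of types $ACC$ and $CCC$ through them.

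Next I would use Lemma~\ref{lem:ValC} to split each $\mV$-line $\{f,g,h\}$ of type $CCC$ into \emph{special} (when $\mO_f\cup\mO_g\cup\mO_h$ is a line of $\HJ$) or \emph{ordinary}, and then form the graph $\Lambda$ on the $3150$ type-$C$ valuations in which two valuations are adjacent precisely when they lie on a common $\mV$-line of type $ACC$ or on a common ordinary $\mV$-line of type $CCC$. By Table~\ref{tab2:HJ} a type-$C$ valuation lies on exactly one $\mV$-line of type $ACC$ (contributing one further type-$C$ valuation) and on nine $\mV$-lines of type $CCC$, of which by Lemma~\ref{lem:ValC} exactly one is special; hence $\Lambda$ is regular of valency $1 + 2\cdot 8 = 17$, and the lemma is exactly the assertion that $\Lambda$ is connected. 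I would verify this by a breadth-first search from an arbitrary vertex, or equivalently a union--find pass over the edge list, confirming that a single component results.

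A more structural variant, which doubles as a consistency check and avoids storing all of $\mV$, uses that $\mathrm{Aut}(\HJ)\cong J_2{:}2$ acts transitively on the $3150$ type-$C$ valuations, so the stabiliser $H$ of one such valuation $f$ has order $|J_2{:}2|/3150 = 384$; since $\Lambda$ is then vertex-transitive under $J_2{:}2$, it is connected if and only if $H$ together with one element realising the $ACC$-edge at $f$ and one element per $H$-orbit of ordinary $CCC$-edges at $f$ generates $J_2{:}2$, which is a short permutation-group computation. I expect the genuine difficulty to lie in none of the combinatorics but in the bookkeeping: making sure the valuation enumeration is exhaustive (which \cite{ab-bdb:1} guarantees), that the type assignment agrees with Table~\ref{tab1:HJ}, and that the special/ordinary dichotomy is coded exactly as in Lemma~\ref{lem:ValC}. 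Once the model is trusted, the connectivity test is routine, and the proof in the paper will simply cite the code, as is done for Lemma~\ref{lemHJ:connected}.
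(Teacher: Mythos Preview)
Your proposal is correct and takes essentially the same approach as the paper: the paper simply states that Lemmas~\ref{lem:ValBC}--\ref{lemG:connected} are verified by computer in the GAP model of \cite{ab-bdb:code}, with no further argument. Your write-up is a considerably more detailed account of what that computation entails (including the valency-$17$ regularity and the optional vertex-transitivity shortcut), but it is the same proof by finite check.
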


\noindent \textit{\textbf{Remark}}: Lemmas \ref{lem:ValBC} and \ref{lem:ValC} could alternatively be verified by a geometric reasoning inside the $G_2(4)$ near octagon, keeping in mind its above mentioned construction using the valuations of type $A$, $B$ and $C$ of $\HJ$. 

\begin{lem} \label{lemG:basic}
Every point of $\mN$ is at distance at most $2$ from $\mH$. 
Points of $\mH$ are of type $A$, points at distance $1$ from $\mH$ are of type $B$ or $C$ and those at distance $2$ are of type $D$ or $E$. 
\end{lem}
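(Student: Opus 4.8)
The plan is to run exactly parallel to the proof of Lemma~\ref{lemHJ:basic}, using Tables~\ref{tab1:HJ} and~\ref{tab2:HJ} in place of Tables~\ref{tab1:HD2} and~\ref{tab2:HD2}, together with the distance bound of Lemma~\ref{new}(1). First I would observe from the column $M_f$ in Table~\ref{tab1:HJ} that every valuation of $\HJ$ has maximum value at least $2$; since $\mN$ has diameter $4$, Lemma~\ref{new}(1) applied to $f_x$ for an arbitrary point $x$ of $\mN$ yields $2 \le M_{f_x} \le 4 - \dist(x,\mH)$, hence $\dist(x,\mH) \le 2$. This gives the first assertion.

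For the type statements I would proceed by distance from $\mH$. If $x$ lies in $\mH$, then $f_x$ is the classical valuation of $\mH$ with centre $x$, which has maximum value $4$ because $\HJ$ has diameter $4$; the only row of Table~\ref{tab1:HJ} with $M_f = 4$ is type $A$, so $x$ has type $A$. If $\dist(x,\mH)=1$, then Lemma~\ref{new}(1) gives $M_{f_x}\le 3$, so $f_x$ is not of type $A$; moreover $x$ is collinear with some point $x'$ of $\mH$, which has type $A$ by the previous case, and since $M_{f_x}\le 3 < 4 = M_{f_{x'}}$ we have $f_x\neq f_{x'}$. By Lemma~\ref{lem:linetypes}(3) the three valuations induced by the points of the line $xx'$ are then pairwise distinct and form a $\mV$-line whose type contains an $A$; from Table~\ref{tab2:HJ} the only such $\mV$-line types are $AAA$, $ABB$, $ACC$, and as $f_x$ is not of type $A$ this forces $f_x$ to have type $B$ or $C$. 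Finally, if $\dist(x,\mH)=2$, then Lemma~\ref{new}(1) gives $M_{f_x}\le 2$, and since types $A$, $B$, $C$ all have $M_f\ge 3$ in Table~\ref{tab1:HJ}, $f_x$ must have type $D$ or $E$.

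I do not expect any serious obstacle here; the statement is essentially a bookkeeping exercise with the valuation tables. The one place that calls for a little care is the distance-$1$ case: one must record that $f_x\neq f_{x'}$ so that $xx'$ genuinely induces a $\mV$-line, and then read off from Table~\ref{tab2:HJ} that type $A$ occurs only on $\mV$-lines of type $AAA$, $ABB$, $ACC$. This single observation simultaneously rules out types $D$ and $E$ among points at distance $1$, whereas in the $\mathrm{H}(2)^D$ setting of Section~\ref{sec:HJ} excluding type $D$ at distance $1$ was split off as the second part of Lemma~\ref{lemHJ:basic} and the exclusion of type $C$ required the separate counting argument of Lemma~\ref{lemHJ:dist1}.
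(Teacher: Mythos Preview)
Your proof is correct and follows essentially the same route as the paper's: both use Lemma~\ref{new}(1) together with the $M_f$ column of Table~\ref{tab1:HJ} to bound the distance and to force types $D$/$E$ at distance~$2$, and both settle the distance-$1$ case by observing that the line $xx'$ to a type~$A$ neighbour must correspond to a $\mV$-line of type $AAA$, $ABB$ or $ACC$. Your write-up is in fact slightly more careful in making explicit that $f_x \neq f_{x'}$ (so that $xx'$ really induces a $\mV$-line), a point the paper leaves implicit.
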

\begin{proof}
Since $\mH$ is isometrically embedded in $\mN$, all points of $\mH$ induce type $A$ valuations. From Lemma \ref{new}(1) and the column $M_f$ of Table \ref{tab1:HJ} we see that the points in $\Gamma_1(\mH)$ cannot be of type $A$, but the points in $\Gamma_2(\mH)$ must be of type $D$ or $E$. If $x \in \Gamma_1(\mH)$, then there exists a line through $x$ that intersects $\mH$, which must necessarily be of type $ABB$ or $ACC$ by Table \ref{tab2:HJ}, implying that $x$ has type $B$ or $C$.
\end{proof}

\begin{cor} \label{corG:dist1}
Every point of $\mN$ at distance $1$ from $\mH$ is collinear with a unique point of $\mH$.
\end{cor}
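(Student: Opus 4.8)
The plan is to deduce this immediately from Lemma~\ref{lemG:basic}, the numerical data in Table~\ref{tab1:HJ}, and part~(2) of Lemma~\ref{new}. First I would let $x$ be a point of $\mN$ with $\dist(x,\mH)=1$. By Lemma~\ref{lemG:basic}, the valuation $f_x$ of $\mH$ induced by $x$ has type $B$ or $C$ (type $A$ is excluded since those are exactly the points lying in $\mH$, and types $D$, $E$ only occur at distance $2$).

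Next I would read off from the column $|\mO_f|$ of Table~\ref{tab1:HJ} that a valuation of type $B$ or of type $C$ satisfies $|\mO_f|=1$; hence $|\mO_{f_x}|=1$. Since $\mH$ is a full isometrically embedded subgeometry of $\mN$, I can apply Lemma~\ref{new}(2) with the embedded near polygon there taken to be $\mH$ and the ambient near $2d$-gon taken to be $\mN$. This yields a point $\pi_{\mH}(x)\in\mH$ collinear with $x$, and uniqueness of this point is part of the conclusion of Lemma~\ref{new}(2) (it also follows directly from axiom (NP2) applied to $x$ and any line of $\mH$ through $\pi_{\mH}(x)$, since a second neighbour of $x$ in $\mH$ would force two points of such a line to be equidistant from $x$). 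This is exactly the assertion of the corollary.

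There is essentially no obstacle here: the only point that needs checking is that the hypothesis $|\mO_{f_x}|=1$ of Lemma~\ref{new}(2) is satisfied, and this is handed to us by Lemma~\ref{lemG:basic} together with Table~\ref{tab1:HJ}. The corollary therefore follows in a single line once those two ingredients are in place.
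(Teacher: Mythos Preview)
Your proof is correct and follows essentially the same route as the paper: invoke Lemma~\ref{lemG:basic} to see that a point at distance~$1$ from $\mH$ has type $B$ or $C$, then use the column $|\mO_f|$ of Table~\ref{tab1:HJ} to conclude $|\mO_{f_x}|=1$, which gives uniqueness of the neighbour in $\mH$. The only difference is that you explicitly route the last step through Lemma~\ref{new}(2), whereas the paper leaves implicit that $\mO_{f_x}$ is exactly the set of neighbours of $x$ in $\mH$; your parenthetical alternative via (NP2) is unnecessary (and not quite right as stated, since a hypothetical second neighbour need not lie on a line of $\mH$ through $\pi_{\mH}(x)$), but the main argument stands.
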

\begin{proof}
Such points are of type $B$ or $C$ and valuations of type $B$ and $C$ have exactly one point of value $0$ (see column $|\mO_f|$ in Table \ref{tab1:HJ}). 
\end{proof}

\begin{lem} \label{lemG:noE}
There are no points of type $E$ in $\mN$.
\end{lem}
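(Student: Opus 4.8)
The plan is to argue by contradiction, exploiting the fact that in the valuation geometry $\mV$ of $\HJ$ a type $E$ valuation never lies on a $\mV$-line together with a valuation of type $A$, $B$ or $C$ (read off from Table \ref{tab2:HJ}), whereas every point of $\mN$ at distance $2$ from $\mH$ is forced to have a neighbour inducing such a valuation. So I would not need any new computer computation here: Lemma \ref{lemG:basic}, Lemma \ref{lem:embeddings} and the two tables for $\HJ$ should suffice, and in particular the order of $\mN$ will not enter.

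Concretely: suppose $x$ is a point of $\mN$ of type $E$. By Lemma \ref{lemG:basic}, $\dist(x,\mH)=2$, so I may choose $p\in\mH$ with $\dist(x,p)=2$ and let $w$ be the middle point of a geodesic from $x$ to $p$, a common neighbour of $x$ and $p$. Then $\dist(w,\mH)\le 1$ since $w$ is collinear with $p\in\mH$, and $\dist(w,\mH)\ne 0$ since otherwise $\dist(x,\mH)\le 1$; hence $\dist(w,\mH)=1$, and Lemma \ref{lemG:basic} forces $w$ to have type $B$ or $C$. Now let $w'$ be the third point of the line $L=xw$ of $\mN$. From Table \ref{tab1:HJ} we have $|\mO_{f_x}|=25$ and $|\mO_{f_w}|=1$, so $f_x\ne f_w$; by Lemma \ref{lem:embeddings}$(3)$ the valuations $f_x$, $f_w$, $f_{w'}$ are then pairwise distinct (if two coincided all three would, contradicting $f_x\ne f_w$), so $\{f_x,f_w,f_{w'}\}$ is a $\mV$-line. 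This $\mV$-line contains the type $E$ valuation $f_x$ together with the valuation $f_w$ of type $B$ or $C$. But Table \ref{tab2:HJ} shows that every $\mV$-line through a type $E$ valuation has type $DEE$ or $EEE$, hence contains no valuation of type $A$, $B$ or $C$ — a contradiction. Therefore $\mN$ has no point of type $E$.

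I do not expect a genuine obstacle here; the proof is short and essentially forced. The only point requiring a little care is ruling out the degenerate alternative in Lemma \ref{lem:embeddings}$(3)$ in which all of $f_x, f_w, f_{w'}$ coincide, which is immediate from the invariant $|\mO_f|$ recorded in Table \ref{tab1:HJ}. It is worth noting that this argument does not carry over to type $D$ points, since in $\mV$ a type $D$ valuation is collinear with the distance-$1$ type $C$ (via the $\mV$-lines of type $CDD$); excluding type $D$ points of $\mN$ will therefore need a separate argument, presumably using the order $(2,10)$ of $\mN$.
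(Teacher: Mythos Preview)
Your proof is correct and follows essentially the same approach as the paper's: both argue that a putative type $E$ point lies at distance $2$ from $\mH$, hence has a neighbour at distance $1$ from $\mH$ which must be of type $B$ or $C$, and then observe from Table~\ref{tab2:HJ} that no $\mV$-line contains both a type $E$ and a type $B$ or $C$ valuation. Your version is slightly more explicit in justifying the existence of such a neighbour and in ruling out the degenerate case $f_x=f_w$ via $|\mO_f|$, but the argument is the same.
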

\begin{proof}
Let $x$ be a type $E$ point of $\mN$. By Lemma \ref{lemG:basic}, $x$ must be at distance $2$ from $\mH$. Let $y$ be a neighbor of $x$ which lies at distance $1$ from $\mH$. Then $x$ has type $B$ or $C$ by Lemma \ref{lemG:basic}. Since the valuations $f_x$ and $f_y$ are not equal, the line $xy$ gives rise to a $\mV$-line in the valuation geometry of $\HJ$ (see Lemma~\ref{lem:linetypes}). But, by Table \ref{tab2:HJ} there are no $\mV$-lines with both type $E$ and type $T$ points on it, for $T \in \{B, C\}$. 
\end{proof}

Let $x$ be a point of $\mN$ at distance $1$ from $\mH$ which by Lemma \ref{lemG:basic} is of type $B$ or $C$.
We will call the unique point of $\mH$ collinear with $x$ (see Corollary \ref{corG:dist1}) the \textit{projection} of $x$, and denote it by $\pi(x)$. 
From now onward we implicitly use the fact that points at distance $1$ from $\mH$ are of type $B$ or $C$.  
For a line $L = \{x, y, z\}$ contained in $\Gamma_1(\mH)$ we define the projection $\pi(L)$ of $L$ to be the set $\{\pi(x), \pi(y), \pi(x)\}$ of points of $\mH$. 
Since $\mN$ is a near polygon, $\pi(L)$ and $L$ have the same size for every line $L$ in $\Gamma_1(\mH)$. 
But, this projection may or may not be a line of $\mH$.

\begin{lem} \label{lemG:B}
Let $x$ be a type $B$ point of $\mN$ and let $y$ be a point on a line through $x$ which does not intersect $\mH$.
Then $y$ is at distance $1$ from $\mH$ and the projections $\pi(x)$ and $\pi(y)$ are collinear. 
\end{lem}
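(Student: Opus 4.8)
The plan is to use the valuation-geometric constraints together with Lemma \ref{lemG:basic} and Lemma \ref{lemG:noE}. Let $x$ be a type $B$ point of $\mN$ and let $L = \{x, y, z\}$ be a line through $x$ disjoint from $\mH$. First I would note that, since $x$ lies at distance $1$ from $\mH$ (Lemma \ref{lemG:basic}), every point of $L$ lies at distance at most $2$ from $\mH$; I want to rule out distance $2$. Suppose $y$ lies at distance $2$ from $\mH$. By Lemma \ref{lemG:basic} and Lemma \ref{lemG:noE}, $y$ would have to be of type $D$ (type $E$ being excluded). The line $L$ then induces a $\mV$-line of $\mH \cong \HJ$ containing a type $B$ valuation $f_x$ and a type $D$ valuation $f_y$ (the three valuations on $L$ being pairwise distinct by Lemma \ref{lem:linetypes}). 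Inspecting Table \ref{tab2:HJ}, there is no $\mV$-line incident with both a type $B$ and a type $D$ valuation; this contradiction shows that $y$ (and by symmetry $z$) lies at distance $1$ from $\mH$.

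Next, since $y$ lies at distance $1$ from $\mH$, it is of type $B$ or $C$, and it has a unique projection $\pi(y)$ in $\mH$ (Corollary \ref{corG:dist1}). By Lemma \ref{lem:linetypes}, the triple $\{f_x, f_y, f_z\}$ is a $\mV$-line of $\HJ$ with $f_x$ of type $B$ and $f_y$ of type $B$ or $C$. From Table \ref{tab2:HJ}, the only $\mV$-lines through a type $B$ valuation that involve only types in $\{B, C\}$ are of type $ABB$, $BBB$ or $BBC$; so $L$ is of one of these types. I would then argue that $\pi(x)$ and $\pi(y)$ are collinear in $\mH$ by analysing each case, but the cleanest route is as follows. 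Let $x' = \pi(x)$ and let $L'$ be the line $xx'$ of $\mN$; this line meets $\mH$, so it induces a $\mV$-line of type $ABB$ (by Table \ref{tab2:HJ}, since $x$ has type $B$ and $x'$ has type $A$). Consider the quad $Q$ generated by $L$ and $L'$ (these two lines meet at $x$; since $\mN$ has three points per line, Shult--Yanushka guarantees a quad as soon as there is a second common neighbour of two opposite points, so I must be a little careful here — see the obstacle paragraph).

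Here is the argument I actually intend to use, avoiding quads. The unique point $\pi(x) = x'$ of $\mH$ with $f_x$-value $0$ is collinear with $x$; the point $\pi(y)$ of $\mH$ is collinear with $y$. If $\pi(x) = \pi(y)$, then $\pi(x)$ would be a common neighbour of $x$ and $y$ lying in $\mH$, and together with $z$ this would force $\pi(x)$ to be collinear with all of $L$, contradicting that $L$ is disjoint from $\mH$ (or directly contradicting (NP2) applied to $\pi(x)$ and $L$); hence $\pi(x) \neq \pi(y)$. Now I would use Lemma \ref{lem:ValB}: the $\mV$-line structure on type $B$ valuations encodes the point-line incidence of $\HJ$. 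If $L$ is of type $BBB$, then $\{f_x, f_y, f_z\}$ is a $\mV$-line of type $BBB$, and by Lemma \ref{lem:ValB} the set $\mO_{f_x} \cup \mO_{f_y} \cup \mO_{f_z} = \{\pi(x), \pi(y), \pi(z)\}$ is a line of $\HJ$ through $\pi(x)$, so in particular $\pi(x)$ and $\pi(y)$ are collinear. If $L$ is of type $ABB$ or $BBC$, then $y$ or $z$ has type $A$ or $C$; the type $A$ case is impossible since type $A$ points lie in $\mH$ while $L$ is disjoint from $\mH$, so $L$ is of type $BBC$ with, say, $f_z$ of type $C$. In that case I would use the $BBC$ row of Table \ref{tab2:HJ} together with a direct check (analogous to Lemma \ref{lem:ValB}, and verifiable in the computer model of \cite{ab-bdb:code}) that on a $\mV$-line of type $BBC$ the two points of value $0$ of the type $B$ valuations are collinear in $\HJ$; this again gives that $\pi(x)$ and $\pi(y)$ are collinear.

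The main obstacle I anticipate is the case analysis for $\mV$-lines of type $BBC$: Lemma \ref{lem:ValB} as stated only covers $BBB$-lines, so I will either need to invoke an additional (easily computer-checked) fact about $BBC$-lines, or first prove that a line through a type $B$ point of $\mN$ disjoint from $\mH$ cannot be of type $BBC$. The latter can likely be done by a counting argument: a type $B$ valuation $f_x$ has value distribution $[1, 10, 112, 192, 0]$, so $x$ has $10$ points of $\mH$ at distance $1$; one of these is $\pi(x)$, contributing $5$ of the $11$ lines through $x$ (namely, $4$ lines disjoint from $\mH$ accounting for value-$1$ points of $\mH$ near $\pi(x)$, via Lemma \ref{lem:ValB}'s bijection, plus the line $x\pi(x)$), and a careful bookkeeping of the remaining lines against the $BBB$ and $BBC$ entries in Table \ref{tab2:HJ} should show all remaining lines through $x$ disjoint from $\mH$ are of type $BBB$. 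If this bookkeeping is clean, the lemma reduces entirely to Lemma \ref{lem:ValB}; otherwise I fall back on the supplementary $BBC$-fact from the computer model.
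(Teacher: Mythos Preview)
Your first paragraph, showing that $y$ lies at distance $1$ from $\mH$, is correct and matches the paper's argument exactly.

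For the collinearity of $\pi(x)$ and $\pi(y)$, however, you are working much harder than necessary and one of your proposed routes is actually false. The paper's argument is a two-line counting observation: since $\dist(x,\pi(y))=2$ we have $f_x(\pi(y))=1$; the value distribution of a type $B$ valuation in Table~\ref{tab1:HJ} shows there are exactly $10$ points of $\mH$ with $f_x$-value $1$, and since $\mH$ has order $(2,4)$ the ten neighbours of $\pi(x)$ in $\mH$ already account for all of them. Hence $\pi(y)$ is one of these neighbours. No case split on the $\mV$-line type is needed, and neither Lemma~\ref{lem:ValB} nor any additional computer check enters.

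More seriously, your fallback plan of showing that a line through a type $B$ point disjoint from $\mH$ cannot be of type $BBC$ is wrong: Lemma~\ref{lemG:BBC} (proved immediately after this lemma, and relying on it) shows that every type $B$ point \emph{is} incident with a $BBC$ line, and Lemma~\ref{lemG:typeB} later shows that in fact all ten lines through a type $B$ point disjoint from $\mH$ are of type $BBC$, with no $BBB$ lines occurring at all. So your ``careful bookkeeping'' would reach the opposite conclusion from what you expect. Your other fallback (a computer-checked fact about $BBC$ $\mV$-lines) would work, but the value-distribution argument above makes it unnecessary.
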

\begin{proof}
The point $y$ must be of type $B$ or $C$ since there are no $\mV$-lines containing both type $B$ and type $T$ points for $T \in \{D, E\}$ (see Table \ref{tab2:HJ}) and hence at distance $1$ from $\mH$ by Lemma \ref{lemG:basic}. 
The projections $\pi(x)$ and $\pi(y)$ have $f_x$-values $0$ and $1$, respectively. 
Since $f_x$ is of type $B$, there are exactly ten points of $\mH$ that have $f_x$-value $1$ (see Table \ref{tab1:HJ}).
Clearly, every point in $\mH$ at distance 1 from $\pi(x)$ has $f_x$-value $1$. 
Since $\mH$ has order $(2,4)$, there are precisely ten such points and hence $\pi(y)$ must be one of them. 
\end{proof}

\begin{cor}
\label{corG:B}
If $x$ is a point of type $B$ in $\mN$, then every line through $x$ that does not intersect $\mH$ is parallel to and at distance $1$ from a unique line of $\mH$. 
\end{cor}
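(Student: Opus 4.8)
The plan is to read this off Lemma~\ref{lemG:B} together with one combinatorial input from Table~\ref{tab2:HJ}. Fix a line $L=\{x,y,z\}$ through the type $B$ point $x$ with $L\cap\mH=\emptyset$. First I would apply Lemma~\ref{lemG:B} to $x$, with $y$ and then $z$ playing the role of the point on a line through $x$ avoiding $\mH$: this shows that $y$ and $z$ also lie at distance $1$ from $\mH$ (so by Lemma~\ref{lemG:basic} each of $x,y,z$ has type $B$ or $C$), and that $\pi(x)$ is collinear in $\mH$ with both $\pi(y)$ and $\pi(z)$. I would also note that the triple $\{f_x,f_y,f_z\}$ is genuinely a $\mV$-line of $\mH$: it cannot be a single repeated valuation, for if $f_x=f_y=f_z$ then, since $|\mO_{f_x}|=1$ by Table~\ref{tab1:HJ}, we would get $\pi(x)=\pi(y)=\pi(z)$, and then $x,y,\pi(x)$ would be three distinct pairwise collinear points of the near polygon $\mN$, forcing them onto a common line which would have to be $L$ and hence meet $\mH$, a contradiction; so by Lemma~\ref{lem:linetypes}(3) the three valuations are pairwise distinct.

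Next I would pin down the type of this $\mV$-line. Since $f_x$ has type $B$, Table~\ref{tab2:HJ} leaves only $ABB$, $BBB$ or $BBC$; the type $ABB$ is impossible because a type $A$ point lies in $\mH$ while $L\cap\mH=\emptyset$. Hence $L$ has type $BBB$ or $BBC$, so at least two of $x,y,z$ have type $B$. Applying Lemma~\ref{lemG:B} to each type $B$ point of $L$ then gives that $\pi(x),\pi(y),\pi(z)$ are pairwise collinear in $\mH$. They are pairwise distinct: if, say, $\pi(x)=\pi(y)$, then $x,y,\pi(x)$ would be three distinct pairwise collinear points (distinct since $\pi(x)\in\mH$ but $x,y\notin\mH$), again forced onto the common line $L$ and contradicting $L\cap\mH=\emptyset$. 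Three distinct pairwise collinear points of the near polygon $\mH$ lie on a common line, and as lines of $\mH$ have three points this line is exactly $M:=\{\pi(x),\pi(y),\pi(z)\}=\pi(L)$.

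Finally I would check parallelism and uniqueness. For every point $p$ of $L$ we have $\pi(p)\in M$ with $\dist(p,\pi(p))=1$, so $\dist(L,M)=1$ (not $0$, since $M\subseteq\mH$ while $L\cap\mH=\emptyset$); and by Corollary~\ref{corG:dist1} the point $p$ is collinear with a unique point of $\mH$, so $\pi(p)$ is the only point of $M$ at distance $1$ from $p$. This is precisely the definition of $L$ and $M$ being parallel at distance $1$. For uniqueness, if $L$ were parallel at distance $1$ to a line $M'$ of $\mH$ as well, then for each $p\in L$ the unique point of $M'$ at distance $1$ from $p$ must again be the unique $\mH$-neighbour $\pi(p)$ of $p$; thus $\pi(x),\pi(y)\in M'$, and being two distinct points they force $M'=M$.

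I do not expect a genuine obstacle, as the corollary is largely a repackaging of Lemma~\ref{lemG:B}; the single point requiring care is that Lemma~\ref{lemG:B} cannot be invoked at a type $C$ point of $L$, so one really does need the observation that Table~\ref{tab2:HJ} contains no $\mV$-line of type $BCC$ in order to guarantee that $L$ carries at least two type $B$ points and hence that \emph{all three} projections are pairwise collinear.
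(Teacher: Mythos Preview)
Your proof is correct, and the paper itself treats the corollary as immediate from Lemma~\ref{lemG:B} without spelling out any argument, so there is little to compare against.

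One remark: the detour through Table~\ref{tab2:HJ} to exclude a $\mV$-line of type $BCC$ (so as to guarantee a second type~$B$ point on $L$ at which Lemma~\ref{lemG:B} can be reapplied) is not actually needed. Once Lemma~\ref{lemG:B} applied at $x$ gives that $\pi(y)$ and $\pi(z)$ are both collinear with $\pi(x)$, and once you have argued (as you do) that $\pi(x),\pi(y),\pi(z)$ are pairwise distinct, you can finish purely with (NP2): let $M$ be the line of $\mH$ through $\pi(x)$ and $\pi(y)$, and apply (NP2) to the pair $(z,M)$. Since $\dist(z,\pi(x))=\dist(z,\pi(y))=2$ (distance $\leq 2$ is clear, and distance $1$ would force $\pi(z)$ to coincide with $\pi(x)$ or $\pi(y)$), the third point of $M$ must lie at distance $1$ from $z$ and hence equals $\pi(z)$. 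So $M=\pi(L)$ directly, without ever knowing the types of $y$ and $z$. This is presumably the one-line deduction the paper has in mind; your longer route via the valuation table is equally valid.
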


\begin{lem}
\label{lemG:BBC}
Every type $B$ point $x$ of $\mN$ is incident with a line of type $BBC$. 
\end{lem}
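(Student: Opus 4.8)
The plan is to argue by contradiction: suppose $x$ is a type $B$ point of $\mN$ that lies on no $\mV$-line of type $BBC$, and derive a contradiction by a counting argument on the points of $\mH$ at $f_x$-value $1$. Recall from Table \ref{tab2:HJ} that a type $B$ valuation lies on exactly one $\mV$-line of type $ABB$, exactly five of type $BBB$, and exactly ten of type $BBC$; and from Table \ref{tab1:HJ} that $f_x$ has value distribution $[1,10,112,192,0]$, so $\mO_{f_x}=\{\pi(x)\}$ and there are exactly $10$ points of $\mH$ at $f_x$-value $1$, which are precisely the $10$ neighbours of $\pi(x)$ in $\mH$ (since $\mH$ has order $(2,4)$ and every neighbour of $\pi(x)$ trivially has $f_x$-value $1$). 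First I would examine the $11$ lines of $\mN$ through $x$: one of them, $\{x,\pi(x),x_0\}$, meets $\mH$, and by Corollary \ref{corG:dist1} and Lemma \ref{lemG:basic} the remaining $10$ lines avoid $\mH$ and, by Lemma \ref{lemG:B}, each such line $L$ lies in $\Gamma_1(\mH)$ with $\pi(L)$ consisting of $\pi(x)$ together with two of the $10$ neighbours of $\pi(x)$ — in fact, by Corollary \ref{corG:B}, $L$ is parallel at distance $1$ to a line $\pi(L)$ of $\mH$ through $\pi(x)$.

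The key observation is a pigeonhole-type tension between the number $10$ of lines through $x$ off $\mH$ and the number $5$ of lines of $\mH$ through $\pi(x)$. Each line $L$ through $x$ off $\mH$ projects onto one of the $5$ lines of $\mH$ through $\pi(x)$; moreover the third point $x_0$ on the line $x\pi(x)$ is itself a point at distance $1$ from $\mH$ whose projection is $\pi(x)$ (the line $x\pi(x)x_0$ has type $ABB$ or... no: it has type $ABB$ if $x_0$ has type $B$, and the third point of an $ABB$ line other than the $A$-point is forced). I would next observe that the type of each off-$\mH$ line through $x$ is determined by the type of its points: by Table \ref{tab2:HJ} such a line has type $BBB$, $BBC$, or — via the third point on $x\pi(x)$ — the line through $\pi(x)$ is $ABB$. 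So if $x$ lies on no $BBC$-line, all $10$ off-$\mH$ lines through $x$ have type $BBB$, contradicting the fact that only $5$ such $\mV$-lines pass through $f_x$ *unless* several of these $10$ lines induce the same $\mV$-line. I would then use Lemma \ref{lem:ValB}: the five $BBB$-$\mV$-lines through $f_x$ biject with the five lines of $\mH$ through $\pi(x)$ via $\{f,g,h\}\mapsto\mO_f\cup\mO_g\cup\mO_h$. Since distinct off-$\mH$ lines through $x$ that induce the same $\mV$-line would force (by the uniqueness in Lemma \ref{lem:ValB}) their projections to coincide as lines of $\mH$, and then $x$ would lie on two lines both parallel at distance $1$ to the *same* line of $\mH$ through $\pi(x)$, giving two points of $\mH$ at distance $1$ from $x$ other than $\pi(x)$... here one gets a near-polygon violation, because those two points would be common neighbours, in $\mH$, of $\pi(x)$ and a single point, forcing a quad that meets $\mH$ badly — or more simply, $x$ would have two distinct neighbours (on the two lines) projecting to the *same* neighbour of $\pi(x)$, which is impossible since that common projected point $p$ satisfies $f_x(p)=1$ and the line $xp$ would be a line of $\mN$ meeting $\mH$ in $p$, contradicting that these lines avoid $\mH$. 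Hence the $10$ off-$\mH$ lines through $x$ induce $10$ distinct $\mV$-lines of type $BBB$, contradicting that only $5$ exist through $f_x$. Therefore $x$ must lie on at least one $BBC$-line.

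The main obstacle I anticipate is handling the third point $x_0$ on the line $x\pi(x)$ carefully: its type (B or C) affects whether the line $x\pi(x)x_0$ is of type $ABB$ or $ACC$, and one must rule out configurations where the "missing" $BBC$-line is hidden among coincidences of $\mV$-lines. The cleanest route is probably to count the $\mV$-lines through $f_x$ weighted by multiplicity of realization in $\mN$: $11 = (\text{lines of }\mN\text{ through }x)$, one meets $\mH$, and each of the remaining $10$ realizes a $\mV$-line through $f_x$ of type $BBB$ or $BBC$; if none is $BBC$ then all $10$ realize one of the $5$ available $BBB$-$\mV$-lines, and the injectivity argument above (via Lemma \ref{lem:ValB} and Corollary \ref{corG:B}) shows each $BBB$-$\mV$-line is realized at most once through $x$, so $10 \le 5$, a contradiction. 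I would present this counting version as the main line of argument and relegate the geometric injectivity claim to a short sub-argument using that $x$ cannot be collinear with two distinct points of $\mH$ (Corollary \ref{corG:dist1}).
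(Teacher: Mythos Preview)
Your argument relies on two things that are not available at this point, and the second of them is a genuine gap even under the extra hypothesis.

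First, you are assuming $\mN$ has order $(2,10)$ when you count ``the $11$ lines of $\mN$ through $x$'' and pit the $10$ off-$\mH$ lines against the $5$ $BBB$ $\mV$-lines. But Lemma~\ref{lemG:BBC} is stated and proved \emph{before} the order hypothesis is introduced; the paper explicitly remarks that everything up to Lemma~\ref{lemG:BC_exist_and_unique} holds for an arbitrary near octagon with three points per line containing $\HJ$ isometrically. So your pigeonhole count simply does not get off the ground in the generality required.

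Second, even granting order $(2,10)$, your injectivity step is not correct as written. You want to show that two distinct $BBB$-lines through $x$ cannot induce the same $\mV$-line, and you argue that if $y_1\in L_1$ and $y_2\in L_2$ both project to the same $p\in\mH$, then ``the line $xp$ would be a line of $\mN$ meeting $\mH$ in $p$''. But $y_1,y_2$ being collinear with $p$ does not make $x$ collinear with $p$; there is no line $xp$, and Corollary~\ref{corG:dist1} is not contradicted. What you actually get is three common neighbours $\pi(x),y_1,y_2$ of $x$ and $p$, hence a quad through $x\pi(x)$---but nothing proved so far rules that out (indeed, in the eventual structure such quads do exist, just with $BBC$-lines rather than $BBB$-lines). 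You would need an independent argument that two $BBB$-lines through $x$ cannot share a $\mV$-line, and none of the lemmas available at this stage give you that.

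The paper's proof avoids both problems by working with the value distribution instead of the line count: it locates a point $y\in\mH$ with $f_x(y)=2$ but $\dist_\mH(\pi(x),y)\geq 3$ (there are $112-80=32$ such points), and then analyses a shortest path $x,u,v,y$ to force $u$ to have type $C$. This uses nothing about the number of lines through $x$, only the valuation data and Lemma~\ref{lemG:B}/Corollary~\ref{corG:B}.
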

\begin{proof}
Let $x$ be a point of type $B$. Then every point of $\mH$ at distance 1 from $\pi(x)$ has $f_x$-value 1. Since $\mO_{f_x} = \{ \pi(x) \}$ every point of $\mH$ at distance 2 from $\pi(x)$ should have $f_x$-value 2, and since $\mH$ is a regular near octagon with parameters $(2, 4; 0, 3)$, there are $80$ such points. By Table \ref{tab1:HJ} there are $112$ points of $\mH$ with $f_x$-value $2$. Let $y$ be one of the other $112-80=32$ points with $f_x$-value 2 at distance at least 3 from $\pi(x)$. Since $f_x(y) = 2$, we have $\dist(x, y) = 3$. Let $x, u, v, y$ be a path of length $3$ connecting $x$ and $y$. By Lemmas \ref{lemG:basic} and \ref{lemG:B}, the point $u$ has type $A$, $B$ or $C$. We will show that $u$ is of type $C$, hence proving that the line $xu$ is of type $BBC$. 

If $u$ is of type $A$, then $u = \pi(x)$, which would be in contradiction with $\dist(\pi(x), y) > 2$. Suppose $u$ is of type $B$, and hence at distance $1$ from $\mH$. From Corollary \ref{corG:B} we see that $\pi(u)$ and $\pi(x)$ are collinear (or equal). 
We cannot have $v = \pi(u)$ as that would imply that $\dist(\pi(x), y) \leq 2$. 
Therefore, $v$ lies outside $\mH$ and $y$ must be equal to $\pi(v)$. 
Again by Corollary \ref{corG:B} $y=\pi(v)$ and $\pi(u)$ must be collinear (or equal), which contradicts the fact that $\dist(\pi(x), y) > 2$. So, $u$ is of type $C$. 
\end{proof}

\begin{cor}
\label{corG:Cexists}
There exist type $C$ points in $\mN$.
\end{cor}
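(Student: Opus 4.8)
The plan is to obtain this corollary as an immediate consequence of Lemma~\ref{lemG:BBC}, once we know that $\mN$ genuinely has a point lying outside $\mH$. So the first step is to observe that $\mH \neq \mN$. In the situation of interest $\mN$ has order $(2,10)$ while $\mH \cong \HJ$ has order $(2,4)$, so the two geometries cannot coincide; even without fixing an order for $\mN$, the hypothesis that $\mH$ is a proper suboctagon would be enough here.

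The second step uses axiom (NP1): the collinearity graph of $\mN$ is connected, so since $\mN \setminus \mH$ is nonempty there exists a point $x \in \mN \setminus \mH$ collinear with some point $p$ of $\mH$. In particular $\dist(x,\mH) = 1$, and therefore by Lemma~\ref{lemG:basic} the point $x$ is of type $B$ or of type $C$. If $x$ already has type $C$, we are done.

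In the remaining case $x$ has type $B$, and here Lemma~\ref{lemG:BBC} applies directly: it furnishes a line of $\mN$ of type $BBC$ through $x$, and the point of type $C$ on that line is a type $C$ point of $\mN$, as desired. The deduction is short, and I do not expect any genuine obstacle; the only thing that needs to be said explicitly is the initial remark that $\mN$ properly contains $\mH$, which is precisely where the hypothesis on the order (equivalently, on the properness of the containment) is used.
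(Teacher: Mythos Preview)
Your proof is correct and follows essentially the same route as the paper's: points at distance~$1$ from $\mH$ exist, they have type $B$ or $C$ by Lemma~\ref{lemG:basic}, and in the type~$B$ case Lemma~\ref{lemG:BBC} supplies a type~$C$ point. You are simply more explicit than the paper about why $\Gamma_1(\mH)$ is nonempty (the paper just asserts it), invoking connectivity and the fact that $\mH$ is a proper subgeometry.
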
 	
\begin{proof}
As there exist points at distance $1$ from $\mH$, there exist points of type $B$ or $C$. The existence of type $B$ points implies the existence of type $C$ points by Lemma \ref{lemG:BBC}. 
\end{proof}

\begin{lem}
\label{lemG:distinct_lines_CCC}
Let $x$ be a point of $\mN$ of type $C$ and let $L_1, L_2$ be two distinct lines of type $CCC$ through $x$. Then the $\mV$-lines corresponding to $L_1$ and $L_2$ must be distinct. 
\end{lem}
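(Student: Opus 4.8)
The plan is to argue by contradiction: assume $L_1\neq L_2$ are type-$CCC$ lines through $x$ that induce the \emph{same} $\mV$-line $\ell=\{f_x,g,h\}$, build a quad of $\mN$ that is glued to $\mH$ along a line, and then exhaust the structure of that quad to reach an impossibility.

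First I would fix notation and make some elementary observations. Writing $L_i=\{x,y_i,z_i\}$ and relabelling, I may assume $f_{y_1}=f_{y_2}=g$ and $f_{z_1}=f_{z_2}=h$; since two distinct lines meet in at most one point, $y_1\neq y_2$ and $z_1\neq z_2$. Each of $x,y_i,z_i$ is of type $C$, hence lies at distance exactly $1$ from $\mH$ (Lemma \ref{lemG:basic} together with Lemma \ref{new}(1) and the column $M_f$ of Table \ref{tab1:HJ}), and by Corollary \ref{corG:dist1} each has a unique neighbour in $\mH$, namely the unique point of the corresponding $\mO$-set; put $p:=\pi(x)$, $u:=\pi(y_1)=\pi(y_2)$, $w:=\pi(z_1)=\pi(z_2)$. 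Applying Lemma \ref{lem:ValC} to the type-$CCC$ $\mV$-line $\ell$, the points $p,u,w$ are pairwise distinct and $\{p,u,w\}=\mO_{f_x}\cup\mO_g\cup\mO_h$ is either a line of $\HJ$ or a triple of pairwise non-collinear points. Also $\dist(x,u)=2$: it is $\le 2$ since $x\sim y_1\sim u$, and it is not $\le 1$ since the only neighbour of $x$ in $\mH$ is $p\neq u$.

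Next I would produce the quad. As $y_1\neq y_2$ are common neighbours of the distance-$2$ pair $x,u$ and all lines have three points, the Shult--Yanushka criterion yields a unique quad $Q$ with $x,u,y_1,y_2\in Q$. Being a convex subspace, $Q$ contains the lines $xy_i=L_i$, hence $z_1,z_2\in Q$, and since $z_1\not\sim z_2$ (otherwise $L_1=L_2$) the common neighbour $w$ of $z_1,z_2$ lies in $Q$ by axiom (Q3). I claim $Q\cap\mH$ is a line of $\mH$. It is a subspace of $\mN$; two non-collinear points in it would, in the non-degenerate quadrangle $Q$, have at least two common neighbours, all lying in $Q$, and because $\mH\cong\HJ$ has the property that two points at distance $2$ have a unique common neighbour, one of these would be a point of $\Gamma_1(\mH)$ collinear with two points of $\mH$ --- contradicting Corollary \ref{corG:dist1}. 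So $Q\cap\mH$ is a clique which is a subspace, hence by (NP2) lies on a single line and, being a subspace, equals it; as $u,w\in Q\cap\mH$ this is the line $uw$ of $\mH$. In particular $u\sim w$, so the above dichotomy forces $\{p,u,w\}$ to be a line $M$ of $\mH$; then $M$ is the line $uw=Q\cap\mH$, so $p\in Q$. Thus $\ell$ is a \emph{special} $CCC$-line, $L_1$ and $L_2$ are transversals of $M$ inside $Q$, and $Q$ is attached to $\mH$ along $M$.

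Finally I would derive the contradiction from the structure of $Q$, and this is the step I expect to be the main obstacle. In $Q$ the three points of $M$ have disjoint ``fibres'' $F_p,F_u,F_w$; every line of $Q$ meeting $M$ has its other two points in the corresponding fibre (projecting to that point of $M$), and every line disjoint from $M$ is a transversal meeting each fibre once. A transversal through a point inducing $g$ is of type $CCC$, and since its three induced valuations have $\mO$-sets $\{p\},\{u\},\{w\}$ it induces a \emph{special} $CCC$-line through $g$, hence --- by the uniqueness in Lemma \ref{lem:ValC} --- it induces $\ell$; likewise for transversals through points inducing $h$ or $f_x$. Following the $F_w$-points of the transversals through $y_1$ and through $y_2$, one shows that every point of $F_w$ induces $h$; but then a line $\{w,z,z'\}$ of $Q$ through $w$ has $f_z=f_{z'}=h$, whence by Lemma \ref{lem:linetypes}(3) all of $f_w,f_z,f_{z'}$ coincide and $f_w=h$, which is impossible since $f_w$ is the classical valuation with centre $w$ and therefore of type $A$. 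Making this monochromaticity count rigorous is the delicate part (requiring the bookkeeping of Table \ref{tab2:HJ} together with Lemma \ref{lem:ValBC} to pin down the types of the points in the fibres): it is immediate when $Q\cong W(2)$, where the relevant transversals already exhaust the four-point fibre $F_w$, whereas for $Q\cong Q(5,2)$ one must either rule that case out in this configuration or recover the monochromaticity by a finer argument.
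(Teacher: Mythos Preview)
Your setup is correct through the construction of the quad $Q$ and the deduction that $Q\cap\mH$ is the line $M=\{p,u,w\}$. The gap is in the final step. Even in the $W(2)$ case you have not justified why the extra transversals through $y_1$ and $y_2$ (other than $L_1,L_2$) must be of type $CCC$ rather than $BBC$; without that, you cannot conclude that their $F_w$-points induce $h$, so the ``monochromaticity'' of $F_w$ is unproved. And the $Q(5,2)$ case, as you acknowledge, is open --- note that you cannot invoke Lemma~\ref{lemG:noQ52} here, since that lemma is proved later \emph{using} the present one, and in any case the order-$(2,10)$ hypothesis has not yet been imposed at this point of the paper.

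The paper's proof bypasses all of this with a single direct application of Lemma~\ref{lem:ValBC}. From the quadrangle $x,z_1,w,z_2$ (in your notation), the lines $L_1=\{x,y_1,z_1\}$ and $wz_2$ are parallel at distance~$1$ inside the quad, so $y_1$ is collinear with the third point $v$ of the line $wz_2$. Now take $f=f_{z_2}$, $g=f_{y_2}\,(=f_{y_1})$ and $h=f_v$ in Lemma~\ref{lem:ValBC}: $g$ lies on the $CCC$-line $\ell$ through $f$, while $h$ lies on the $ACC$-line $\{f_w,f_{z_2},f_v\}$ through $f$; these are distinct $\mV$-lines, so $g$ and $h$ are non-collinear in $\mV$. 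But $y_1\sim v$ in $\mN$ makes $g=f_{y_1}$ and $h=f_v$ neighbouring (and distinct, since they sit on different $\mV$-lines through $f_{z_2}$), hence collinear in $\mV$ --- a contradiction.

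So you had exactly the right lemma in hand but deployed it only as a background bookkeeping tool; one application of Lemma~\ref{lem:ValBC} at the level of the quadrangle already finishes the argument, with no case analysis on the isomorphism type of $Q$ needed.
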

\begin{proof}
Let $L_1 = \{x, y, z\}$ and $L_2 = \{x, y', z'\}$. Assume that they correspond to the same $\mV$-line so that $f_y = f_{y'}$ and $f_z = f_{z'}$. 
Let $u := \pi(z) = \pi(z')$. Since $xzuz'$ is a quadrangle, the point $y$ must be collinear with the third point on the line $uz'$, call it $v$. 
Therefore, the valuations $f_v$ and $f_y = f_{y'}$ are collinear in $\mV$. 
The collinearity of $f_v$ and $f_{y'}$ in $\mV$ contradicts Lemma \ref{lem:ValBC} by taking $f = f_{z'}$, $g = f_{y'}$ and $h = f_v$. 
\end{proof}

On the valuations of type $C$ we can define a subgeometry of $\mV$ induced by the lines of type $ACC$ and the ordinary lines of type $CCC$.
Let this subgeometry be denoted by $\mV_C$ and its collinearity graph by $\Gamma_1$. 
Similarly, we can define a subgeometry $\mN_C$ of $\mN$ by taking the points of type $C$ and the lines that correspond to lines of $\mV_C$.
Let $\Gamma_2$ be the collinearity graph of  $\mN_C$.
Since type $C$ points exist in $\mN$, the graph $\Gamma_2$ is nonempty.

\begin{lem}
\label{lemG:cover}
The graph $\Gamma_2$ is a cover of the graph $\Gamma_1$ by the map $x \mapsto f_x$. 
\end{lem}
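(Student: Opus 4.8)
The plan is to verify that $\phi\colon x\mapsto f_x$ is a covering map of graphs in the usual sense: that it is surjective onto the vertex set of $\Gamma_1$ and that, for every vertex $x$ of $\Gamma_2$, it restricts to a bijection between the neighbourhood of $x$ in $\Gamma_2$ and the neighbourhood of $f_x$ in $\Gamma_1$. The surjectivity will come for free once local bijectivity is known: if $\phi$ is locally bijective then the image $\phi(V(\Gamma_2))$ is closed under passing to neighbours in $\Gamma_1$, hence a union of connected components; since $\Gamma_1$ is connected (Lemma~\ref{lemG:connected}) and $\Gamma_2$ is non-empty (Corollary~\ref{corG:Cexists}) this forces $\phi(V(\Gamma_2))=V(\Gamma_1)$. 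So it suffices to prove that $\phi$ maps vertices to vertices and edges to edges, and that it is locally bijective.

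The first claim is immediate: a type $C$ point induces a type $C$ valuation, and if $\{x,y\}$ is an edge of $\Gamma_2$ then $x,y$ lie on a line $L$ of $\mN_C$, i.e.\ a line of type $ACC$ or an ordinary line of type $CCC$; by definition of $\mN_C$ the induced valuations on $L$ form a line of $\mV_C$, so $\{f_x,f_y\}$ is an edge of $\Gamma_1$. For local injectivity, fix a type $C$ point $x$. It has a unique neighbour $x':=\pi(x)$ in $\mH$ (Corollary~\ref{corG:dist1}), hence lies on a unique line of $\mN$ meeting $\mH$; that line has type $ACC$, and it is the only $ACC$-line through $x$ because, by Lemma~\ref{lemG:basic}, any point of $\mN$ of type $A$ must lie in $\mH$. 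If two distinct $\Gamma_2$-neighbours $y,z$ of $x$ had $f_y=f_z$, they could not lie on a common line of $\mN$ (Lemma~\ref{lem:linetypes} would then force all valuations on it to be equal), so they would lie on two distinct lines $L_1,L_2$ of $\mN_C$ through $x$; these induce distinct $\mV$-lines through $f_x$ (by Lemma~\ref{lemG:distinct_lines_CCC} when both are of type $CCC$, and because the $ACC$-line through $x$ is unique in the remaining cases), each containing the common value $f_y$, contradicting that $\mV$ is a partial linear space.

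It remains to establish local surjectivity, which I expect to be the main obstacle. One first checks that $f_x$ has exactly $17$ neighbours in $\Gamma_1$: it lies on a unique $\mV$-line of type $ACC$ and, by Lemma~\ref{lem:ValC}, on exactly eight \emph{ordinary} lines of type $CCC$ among the nine $CCC$-lines through it, so there are $1+2\cdot 8=17$ type $C$ valuations collinear with $f_x$ in $\mV_C$, all distinct since $\mV$ is a partial linear space. Combined with local injectivity it therefore suffices to show that every type $C$ point $x$ of $\mN$ lies on exactly eight ordinary $CCC$-lines of $\mN_C$; the bound ``at most eight'' follows from Lemma~\ref{lemG:distinct_lines_CCC}, so the content is the lower bound. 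Using that $\mN$ has order $(2,10)$, there are eleven lines of $\mN$ through $x$: the line $xx'$ of type $ACC$ and ten lines missing $\mH$, each of which (as $x$ has type $C$) induces a $\mV$-line of type $BBC$, $CCC$ or $CDD$ by Table~\ref{tab2:HJ}. I would then prove that exactly one of these ten fails to be an ordinary $CCC$-line, by a careful count of how the twenty-six points of $\mH$ of $f_x$-value $1$ distribute over the lines through $x$; the inputs are the value distribution $[1,26,128,160,0]$ of $f_x$, the regular-near-octagon parameters $(2,4;0,3)$ of $\mH\cong\HJ$, the uniqueness of the special $CCC$ $\mV$-line through $f_x$ (Lemma~\ref{lem:ValC}), and the non-collinearity of two distinct valuations of type $B$ or $C$ on distinct $\mV$-lines through a type $C$ valuation (Lemma~\ref{lem:ValBC}). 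Once $\phi$ is locally bijective, surjectivity and the covering property follow as in the first paragraph.
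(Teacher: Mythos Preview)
Your overall framework (local bijectivity plus connectedness of $\Gamma_1$ yields a cover) is the same as the paper's, and your local injectivity argument is fine. The gap is in the local surjectivity step. At this point in the paper $\mN$ is only assumed to have three points per line; the order $(2,10)$ hypothesis is introduced \emph{after} this lemma (see the remark preceding Lemma~\ref{lemG:BC_exist_and_unique}), and Lemma~\ref{lemG:cover} is in fact needed to prove Lemma~\ref{lemG:BC_exist_and_unique}. So you cannot invoke ``eleven lines through $x$'' here. Even granting the extra hypothesis, your proposed count is off: through a type $C$ point there turn out to be \emph{two} non-ordinary lines among the ten not meeting $\mH$ (one of type $BBC$ and one special $CCC$), not one; and you give no mechanism for excluding lines of type $CDD$, which at this stage have not yet been ruled out. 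The sketch ``I would then prove \ldots\ by a careful count'' does not close these issues.

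The paper's argument bypasses any knowledge of $t$. With $x$ of type $C$, look at the set $U$ of points of $\mH$ with $f_x$-value $1$ that are \emph{not} collinear with $\pi(x)$; from Table~\ref{tab1:HJ} one has $|U|=26-10=16$. For each $u\in U$ there is a common neighbour $v$ of $x$ and $u$; one checks $v\notin\mH$, $u=\pi(v)$, and since $\pi(x)$ and $\pi(v)$ are non-collinear, Lemma~\ref{lemG:B} forces $v$ to have type $C$, so $xv$ is an ordinary $CCC$-line. Conversely every ordinary $CCC$-line through $x$ contributes exactly two points of $U$ (the projections of its two non-$x$ points). As there are at most eight such lines (Lemmas~\ref{lem:ValC} and~\ref{lemG:distinct_lines_CCC}) and $|U|=16$, there are exactly eight, in bijection with the eight ordinary $\mV$-lines of type $CCC$ through $f_x$. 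This gives local surjectivity without ever counting all lines through $x$.
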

\begin{proof}
Let $x$ be a point of type $C$ in $\mN$. There are $16$ points of $f_x$-value $1$ that are not collinear with $\pi(x)$ (see Table \ref{tab1:HJ}). Denote this set of $16$ points by $U$. If $u \in U$, then $\dist(x,u) = 2$. Denote by $v$ a common neighbor of $x$ and $u$. Since $v \not= \pi(x)$, $v \not\in \mH$ and $u = \pi(v)$. Since $u \not= \pi(x)$, the point $v$ cannot be contained on the line $x\pi(x)$, and so $xv$ is a line disjoint from $\mH$. Since $\pi(x)$ and $\pi(v)=u$ are not collinear, Lemma \ref{lemG:B} implies that $v$ has type $C$ and hence that $xv$ is a (necessarily ordinary) line of type $CCC$.

By Lemmas \ref{lem:ValC}, \ref{lemG:distinct_lines_CCC} and Table \ref{tab2:HJ} there are at most $8$ ordinary lines of type $CCC$ through $x$, each of which determines two points of the set $U$.
Since $|U| = 16$, it follows that there are precisely $8$ ordinary lines of type $CCC$ through $x$ and they correspond bijectively to the $8$ ordinary $\mV$-lines of type $CCC$ through $f_x$.
This proves that the map $x \mapsto f_x$ is a local isomorphism between $\Gamma_2$ and $\Gamma_1$. 
The fact that this map is surjective now follows from the connectedness of $\Gamma_1$, see Lemma \ref{lemG:connected}. 
\end{proof}

\begin{cor}
\label{corG:cover}
If $\Gamma_2$ is an $i$-cover of $\Gamma_1$ for some $i \geq 1$, then each valuation of type $C$ is induced by precisely $i$ type $C$ points of $\mN$.
As a consequence, through each point of $\mH$, there are precisely $5i$ lines of type $ACC$. 
\end{cor}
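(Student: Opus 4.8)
The first assertion is essentially a reformulation of the hypothesis. By Lemma \ref{lemG:cover} the map $x \mapsto f_x$ is a covering map of the graph $\Gamma_2$ (whose vertices are the type $C$ points of $\mN$) onto the graph $\Gamma_1$ (whose vertices are the type $C$ valuations of $\mH$), and since $\Gamma_1$ is connected (Lemma \ref{lemG:connected}) all of its fibres have a common cardinality. Saying that this is an $i$-cover means precisely that each fibre has size $i$, i.e.\ that every type $C$ valuation of $\mH$ is induced by exactly $i$ type $C$ points of $\mN$.

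For the consequence I would fix a point $p$ of $\mH$ and count, in two ways, the type $C$ points of $\mN$ collinear with $p$. The key preliminary observation is that a type $C$ point $x$ of $\mN$ lies at distance $1$ from $\mH$ by Lemma \ref{lemG:basic} (points of $\mH$ have type $A$), so $\dist(x,\mH)=1$ and hence $f_x(\pi(x)) = \dist(x,\pi(x)) - 1 = 0$; as $|\mO_{f_x}| = 1$ this forces $\mO_{f_x} = \{\pi(x)\}$. Consequently, for a type $C$ point $x$ one has $\pi(x)=p$ if and only if $\mO_{f_x}=\{p\}$, and in particular all type $C$ points inducing a fixed type $C$ valuation $g$ with $\mO_g=\{p\}$ are collinear with $p$. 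By the first part of the corollary there are exactly $i$ such points for each such $g$, so the number of type $C$ points of $\mN$ collinear with $p$ equals $i$ times the number of type $C$ valuations $g$ of $\mH$ with $\mO_g=\{p\}$. Since $\mathrm{Aut}(\mH)\cong J_2{:}2$ acts transitively on the $315$ points of $\mH$ and acts on the valuation geometry preserving types and commuting with the operator $\mO$, this latter number is independent of $p$; summing over all points of $\mH$ and using that each of the $3150$ type $C$ valuations (Table \ref{tab1:HJ}) has a unique zero point, it equals $3150/315 = 10$. Hence exactly $10i$ type $C$ points of $\mN$ are collinear with $p$.

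It remains to match this with the lines of type $ACC$ through $p$. If $x$ is a type $C$ point collinear with $p$, the line $px$ is not contained in $\mH$ (otherwise $\mH$, being convex, would contain it and hence $x$), so its third point $z$ satisfies $z \notin \mH$ and $\dist(z,\mH)=1$, whence $z$ has type $B$ or $C$ by Lemma \ref{lemG:basic}; the $\mV$-line induced by $\{p,x,z\}$ therefore contains the type $A$ valuation $f_p$ together with the type $C$ valuation $f_x$, and by Table \ref{tab2:HJ} the only $\mV$-line type carrying both a type $A$ and a type $C$ valuation is $ACC$, so the line $px$ has type $ACC$ and $z$ has type $C$. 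Conversely, any line of type $ACC$ through $p$ has its two points other than $p$ outside $\mH$, of type $C$, and collinear with $p$ with projection $p$. Thus $x \mapsto px$ is a surjection from the $10i$ type $C$ points collinear with $p$ onto the set of lines of type $ACC$ through $p$, and it is exactly two-to-one (such a line contains precisely its two non-$p$ points as type $C$ points, and two distinct points determine at most one line). Therefore there are $10i/2 = 5i$ lines of type $ACC$ through $p$.

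The only points that need a little care are the equivalence $\pi(x)=p \Leftrightarrow \mO_{f_x}=\{p\}$ for type $C$ points, the symmetry argument producing the constant $10$, and the use of Table \ref{tab2:HJ} to see that every line through $p$ meeting a type $C$ point is automatically of type $ACC$; none of these constitutes a genuine obstacle.
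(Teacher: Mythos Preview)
Your proof is correct and follows essentially the same idea as the paper's, though the paper's version is considerably more compressed. The paper simply notes (from Table~\ref{tab2:HJ}) that there are exactly $5$ $\mV$-lines of type $ACC$ through a given type $A$ valuation $f_p$, and since each of the type $C$ valuations on these lines is induced by exactly $i$ points of $\mN$, one obtains $5i$ lines of type $ACC$ through $p$.

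The only real difference is in how you obtain the number $10$ of type $C$ valuations with zero at $p$: you compute it as $3150/315$ using point-transitivity of $\mathrm{Aut}(\mH)$, whereas the paper reads it off directly from Table~\ref{tab2:HJ} as $5$ $\mV$-lines times $2$ type $C$ valuations per line. Your route is slightly more roundabout but perfectly valid; the paper's use of the table is more direct and avoids invoking transitivity. Your explicit verification that a line through $p$ and a type $C$ point must have type $ACC$, and that the correspondence is two-to-one, fills in steps the paper leaves implicit.
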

\begin{proof}
By Table \ref{tab2:HJ} there are precisely $5$ $\mV$-lines of type $ACC$ through a given valuation of type $A$. 
Since each type $C$ valuation occurs exactly $i$ times, we have $5i$ type $ACC$ lines in $\mN$ through a given point of $\mH$. 
\end{proof}

\noindent
\textbf{\textit{Remark:}} All results in this section so far are valid for a general near octagon of order $(2,t)$ that contains an isometrically embedded sub near octagon isomorphic to $\HJ$. In the following lemma, we need the fact that $\mN$ has order $(2,10)$. 

\begin{lem}
\label{lemG:BC_exist_and_unique}
If $\mN$ is of order $(2,10)$, then each valuation of type $T \in \{B, C\}$ is induced exactly once by a point of $\mN$. 
\end{lem}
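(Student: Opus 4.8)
The plan is to count the points of $\mN$ at distance $1$ from $\mH$ and to compare this count with the number of type $B$ and type $C$ valuations of $\mH$, using the cover structure established in Lemma~\ref{lemG:cover}. Assume $\mN$ has order $(2,10)$. Since every point of $\mH$ has $11$ lines through it, $5$ of which lie in $\mH$ (as $\mH \cong \HJ$ has order $(2,4)$), there are exactly $6$ lines through each point of $\mH$ going out of $\mH$, hence exactly $12$ points of $\Gamma_1(\mH)$ collinear with a given point of $\mH$. By Corollary~\ref{corG:dist1} each point of $\Gamma_1(\mH)$ has a unique projection in $\mH$, so $|\Gamma_1(\mH)| = 12 \cdot |\mH| = 12 \cdot 315 = 3780$. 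By Lemma~\ref{lemG:basic} every such point is of type $B$ or $C$.

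\textbf{Counting type $B$ points.} First I would show that each type $B$ valuation is induced at most (and then exactly) once. Suppose $x$ and $x'$ both induce the same type $B$ valuation $f$, so $\pi(x) = \pi(x') = \mO_f$. One argues as in the proof of Lemma~\ref{lemG:BBC} / Corollary~\ref{corG:B}: the $10$ points of $\mH$ at distance $1$ from $\pi(x)$ are exactly the points of $f_x$-value $1$ collinear with $\pi(x)$, and using Lemma~\ref{lemG:B} together with Corollary~\ref{corG:B} (parallel lines at distance $1$) one sees that the neighbours of $x$ outside $\mH$ are determined by $f$ in a way that cannot accommodate two distinct points with the same projection without violating the near-polygon axiom (NP2) — more precisely, combining the line of type $ABB$ through $f$ (unique by Table~\ref{tab2:HJ}) with the five lines of type $BBB$ through $f$ and Lemma~\ref{lem:ValB}, the $6$ lines through $x$ outside $\mH$ are forced, so $x$ is unique. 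I would make this rigorous by the same bijection technique as in Lemma~\ref{lemHJ:B1}: along a $\mV$-line of type $ABB$ or $BBB$ one gets a bijection between the fibres over the two type $B$ valuations on it, and by the connectedness statement (analogue of Lemma~\ref{lemHJ:connected}, which should hold here by the same computer verification, or can be cited from \cite{ab-bdb:code}) all fibres over type $B$ valuations have equal size $j$. Then counting: the type $B$ points of $\Gamma_1(\mH)$ number $630 j$.

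\textbf{Counting type $C$ points and concluding.} By Lemma~\ref{lemG:cover} the graph $\Gamma_2$ is a cover of $\Gamma_1$, say an $i$-cover for some $i \geq 1$; by Corollary~\ref{corG:cover} each type $C$ valuation is then induced exactly $i$ times, so there are $3150 i$ type $C$ points, all at distance $1$ from $\mH$. Hence $630 j + 3150 i = 3780$, i.e. $j + 5i = 6$. Since $i \geq 1$ and $j \geq 1$ (type $B$ points exist because $\Gamma_1(\mH)$ is nonempty and, if all its points were of type $C$, Lemma~\ref{lemG:BBC} would be vacuous but a type $C$ point forces — via the ordinary $CCC$-lines and Lemma~\ref{lemG:cover} — nothing contradictory directly; the cleaner route is that a point at distance $1$ from $\mH$ of type $C$ together with Lemma~\ref{lemG:BBC} applied in reverse, or simply that not every line out of $\mH$ through a point of $\mH$ can be of type $ACC$ since only $5i$ of the $6$ are, leaving at least one of type $ABB$, giving a type $B$ point), we get $j = 1$ and $i = 1$. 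Therefore each valuation of type $B$ and each of type $C$ is induced exactly once by a point of $\mN$.

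\textbf{Main obstacle.} The delicate point is justifying that $j \geq 1$, i.e. that type $B$ points genuinely occur: this needs the observation that through a point of $\mH$ only $5i = 5$ of the $6$ outgoing lines can be of type $ACC$ (by Corollary~\ref{corG:cover} once $i=1$ is known — but $i=1$ is what we are proving, so one must instead argue that if there were no type $B$ points then \emph{every} outgoing line at every point of $\mH$ is of type $ACC$, forcing $6 = 5i$, impossible for integer $i$). Equivalently, one shows $5i \le 6$ and $5i \ne 6$ directly, hence $i = 1$, and then $j = 6 - 5 = 1$. Making the bijection-and-connectedness argument for the type $B$ fibres airtight (the analogue of Lemmas~\ref{lemHJ:B1}–\ref{lemHJ:connected}) is routine given the earlier machinery but is where the computer input \cite{ab-bdb:code} is needed.
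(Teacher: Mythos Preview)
Your route to $i=1$ for the type $C$ valuations is essentially the paper's: by Corollary~\ref{corG:cover} each point of $\mH$ carries $5i$ lines of type $ACC$ leaving $\mH$, and since only $11-5=6$ lines leave $\mH$ at each point, $5i\le 6$ with $i\ge 1$ forces $i=1$.

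The type $B$ argument, however, has a genuine gap. Your equal-fibre-size constant $j$ rests on a bijection along $\mV$-lines of type $BBB$ (the $ABB$ lines alone yield only a perfect matching on the $630$ type $B$ valuations, since each lies on exactly one $ABB$ $\mV$-line by Table~\ref{tab2:HJ}; this is far from connected). For that bijection you would need: whenever $\{f_1,f_2,f_3\}$ is a $BBB$ $\mV$-line and $x$ is a point with $f_x=f_1$, there is a unique neighbour of $x$ inducing $f_2$. Nothing proved up to this point gives existence of such a neighbour, and indeed it \emph{fails}: Lemma~\ref{lemG:typeB}, established later, shows that $\mN$ contains no lines of type $BBB$ at all, so no such neighbour exists. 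The fibres over type $B$ valuations do all have size $1$, but not via the mechanism you propose; and the connectedness statement you defer to \cite{ab-bdb:code} is neither stated nor used in the paper for $\HJ$. Your global equation $630j+3150i=3780$ is therefore unusable as written, and even your fallback conclusion ``$j=6-5=1$'' still presupposes the constant $j$ you have not secured.

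The paper sidesteps this completely. Once $i=1$, the sixth outgoing line at each $p\in\mH$ must be of type $ABB$. By Table~\ref{tab2:HJ} there is a \emph{unique} $\mV$-line of type $ABB$ through the type $A$ valuation $f_p$, so that single $ABB$ line in $\mN$ through $p$ necessarily realises it, and its two points outside $\mH$ induce the two type $B$ valuations on that $\mV$-line. Conversely, any point $x$ inducing a given type $B$ valuation $f$ has $\pi(x)$ equal to the unique point of $\mO_f$ and must lie on the (now unique) $ABB$ line through $\pi(x)$, hence is determined. No bijection-and-connectedness machinery and no further computer input are needed for type $B$; the single entry ``$1$'' in row $ABB$, column $A$ of Table~\ref{tab2:HJ} does all the work.
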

\begin{proof}
Let $\mN$ be of order $(2,10)$ and let $x$ be an arbitrary point of $\mH$.
Then there are exactly $11 - 5 = 6$ lines through $x$ that are not contained in $\mH$, each of which has type $ACC$ or $ABB$. 
Since type $C$ points exist by Corollary \ref{corG:Cexists}, it follows from Corollary \ref{corG:cover} that there are precisely $5$ lines of type $ACC$ through $x$ in $\mN$, and hence the graph $\Gamma_2$ is a $1$-cover of $\Gamma_1$. 
Now the $6$-th line through $x$ which is not contained in $\mH$ must be of type $ABB$. 
Therefore, through every point of $\mH$ there are $5$ lines of type $ACC$ and a unique line of type $ABB$. 
This shows that for every valuation $f$ of type $B$, we can find the unique point of $\mN$ that induces $f$ by first getting the point $y$ of $\mH$ that induces the type $A$ valuation on the unique $\mV$-line of type $ABB$ through $f$ (see Table \ref{tab2:HJ}), and then picking the point on the unique line of type $ABB$ through $x$ in $\mN$ that induces the valuation $f$.  
\end{proof}

For the rest of this section assume that $\mN$ has order $(2,10)$. 
From Lemma \ref{lemG:BC_exist_and_unique} we know that both type $B$ and type $C$ points exist in $\mN$ and each type $B$ or type $C$ valuation of $\mH$ is induced by a unique point of $\mN$.  
Let $x$ be a point of type $B$ in $\mN$ and let $L_x$ be the unique line joining $x$ and $\pi(x)$. 
From Corollary \ref{corG:B} it follows that every other line through $x$ gives rise to a quad in $\mN$ that intersects $\mH$ and contains $L_x$.

\begin{lem}
\label{lemG:QuadIntersection}
Let $Q$ be a quad of $\mN$ that intersects $\mH$ nontrivially.
Then $Q \cap \mH$ is either a singleton or a line.
\end{lem}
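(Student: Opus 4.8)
The plan is to mimic the proof of Lemma~\ref{lemHJ:QuadIntersection}, which handled the analogous statement for $\mH \cong \mathrm{H}(2)^D$ inside a near octagon of order $(2,4)$. Assume $Q \cap \mH$ is not a singleton. Since $Q \cap \mH$ is a subspace of the generalized quadrangle $Q$ (it is the intersection of a convex subspace of $\mN$ with the convex subspace $\mH$), and since a subspace of a generalized quadrangle that contains two non-collinear points and is geodetically closed must be the whole quadrangle or contain a non-degenerate subquadrangle, it suffices to rule out the existence of two non-collinear points $x,y$ in $Q \cap \mH$.

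So suppose $x,y \in Q\cap\mH$ are at distance $2$ in $\mN$ (hence also at distance $2$ in $\mH$, by isometric embedding). Inside the generalized quadrangle $Q$ the points $x$ and $y$ have at least two common neighbours, and these common neighbours all lie in $Q$ (by convexity of $Q$). Because $Q \cap \mH$ is itself convex and $\mH \cong \HJ$ is a near octagon in which (being a generalized hexagon? no --- $\HJ$ has non-collinear point-pairs with $c$ common neighbours) --- here one must use the structural facts about $\HJ$ already in play: every pair of points of $\HJ$ at distance $2$ has a unique common neighbour, since $\HJ$ is the regular near octagon with parameters $(2,4;0,3)$, so $t_2 = 0$ means exactly one common neighbour. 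Hence among the $\geq 2$ common neighbours of $x$ and $y$ in $Q$, at most one lies in $\mH$, so at least one common neighbour $z$ of $x$ and $y$ lies outside $\mH$. Then $z$ is a point at distance $1$ from $\mH$ that is collinear with the two points $x$ and $y$ of $\mH$. This contradicts Corollary~\ref{corG:dist1}, which says every point of $\mN$ at distance $1$ from $\mH$ is collinear with a \emph{unique} point of $\mH$.

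The main obstacle, if any, is making sure the relevant property of $\HJ$ is available in the exact form needed: I would invoke that $\HJ$ is the regular near octagon with parameters $(2,4;0,3)$, whose $t_2 = 0$ precisely means that any two points at distance $2$ have a unique common neighbour; this is used already (e.g.\ in the proof of Lemma~\ref{lemHJ:QuadIntersection} for $\mathrm H(2)^D$, and implicitly for $\HJ$ elsewhere). One should also be careful that $x,y$ non-collinear in $\mH$ really does force at least two common neighbours in $Q$ --- this is just axiom (Q4): $Q$ is a non-degenerate generalized quadrangle of order $(2,t')$, so two points at distance $2$ in $Q$ have $t'+1 \geq 2$ common neighbours in $Q$. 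With these two ingredients the contradiction with Corollary~\ref{corG:dist1} is immediate, so the proof is short.

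\begin{proof}
Suppose $Q \cap \mH$ is not a singleton; being a convex subspace of $\mN$, it is in particular a subspace of the non-degenerate generalized quadrangle $Q$. It therefore suffices to show that no two points of $Q \cap \mH$ are non-collinear. Assume to the contrary that $x,y \in Q \cap \mH$ are non-collinear. Since $\mH$ is isometrically embedded in $\mN$, the points $x$ and $y$ lie at distance $2$ in $\mN$. As $Q$ is a non-degenerate generalized quadrangle of order $(2,t')$, the points $x$ and $y$ have $t'+1 \geq 2$ common neighbours, all of which lie in $Q$ by convexity. On the other hand, $\mH \cong \HJ$ is the regular near octagon with parameters $(2,4;0,3)$, so any two of its points at distance $2$ have a unique common neighbour inside $\mH$. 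Hence at least one common neighbour $z$ of $x$ and $y$ in $Q$ lies outside $\mH$. Then $z$ is a point of $\mN$ at distance $1$ from $\mH$ that is collinear with the two distinct points $x,y \in \mH$, contradicting Corollary~\ref{corG:dist1}.
\end{proof}
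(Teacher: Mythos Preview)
Your proof is correct and follows exactly the approach the paper intends: the paper's proof of Lemma~\ref{lemG:QuadIntersection} simply reads ``The proof is similar to that of Lemma~\ref{lemHJ:QuadIntersection},'' and you have faithfully reproduced that argument, replacing the appeal to Corollary~\ref{new2} by the analogous Corollary~\ref{corG:dist1} and using the fact that $\HJ$ has parameter $t_2=0$ (so two points at distance~$2$ in $\mH$ have a unique common neighbour) in place of the corresponding property of $\mathrm{H}(2)^D$.
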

\begin{proof}
The proof is similar to that of Lemma \ref{lemHJ:QuadIntersection}.
\end{proof}

\begin{lem}
\label{lem:QuadBC}
Let $Q$ be a quad of $\mN$ that is not a grid and that intersects $\mH$ in a line $L$. Then there must exist points of type $B$ and points of type $C$ in $Q \setminus L$.
\end{lem}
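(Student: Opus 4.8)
The plan is to classify the lines of $Q$ by how they meet $L$ and to obtain contradictions from the hypothetical absence of type $B$ (resp.\ type $C$) points in $Q\setminus L$, using Tables \ref{tab1:HJ} and \ref{tab2:HJ} together with Lemmas \ref{lemG:BC_exist_and_unique}, \ref{lemG:distinct_lines_CCC} and \ref{lem:ValC}. First I would set up the basic structure. Since $Q$ is not a grid it has order $(2,t')$ with $t'\in\{2,4\}$, in particular $t'\geq 2$. As $Q$ is a non-degenerate generalized quadrangle and $Q\cap\mH=L$, every point $p\in Q\setminus L$ is collinear with a unique point of $L$, which is then its projection $\pi(p)$, so $\dist(p,\mH)=1$ and, by Lemma \ref{lemG:basic}, $p$ has type $B$ or $C$. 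Moreover the only line of $Q$ through $p$ that meets $L$ is $p\,\pi(p)$, so the other $t'$ lines of $Q$ through $p$ are disjoint from $L$, hence from $\mH$. Finally, each line of $Q$ through a point $\ell$ of $L$ other than $L$ itself (a \emph{lateral} line) carries the type $A$ valuation $f_\ell$, so by Table \ref{tab2:HJ} it induces a $\mV$-line of type $ABB$ or $ACC$, and the two points of the lateral line lying in $Q\setminus L$ are accordingly either both of type $B$ or both of type $C$.

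For the existence of a type $C$ point in $Q\setminus L$, I would argue by contradiction: if there is none, all lateral lines are of type $ABB$, so through a point $\ell$ of $L$ there pass $t'\geq 2$ distinct lines of $\mN$ of type $ABB$; this contradicts the fact, established in the proof of Lemma \ref{lemG:BC_exist_and_unique}, that through every point of $\mH$ there is a unique line of type $ABB$.

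For the existence of a type $B$ point in $Q\setminus L$, I would again argue by contradiction: if every point of $Q\setminus L$ has type $C$, then every line of $Q$ disjoint from $L$ has all three points of type $C$ and hence induces a $\mV$-line of type $CCC$. The key claim is that every such line $M=\{p,p_1,p_2\}$ is \emph{special}. Indeed, in the generalized quadrangle $Q$ the three points $\pi(p),\pi(p_1),\pi(p_2)$ of $L$ are pairwise distinct, since a coincidence would produce a point of $L$, lying off $M$, collinear with two points of $M$; hence $\{\pi(p),\pi(p_1),\pi(p_2)\}=L$, a line of $\HJ$, and since $\mO_{f_q}=\{\pi(q)\}$ for each of these type $C$ valuations, the $\mV$-line $\{f_p,f_{p_1},f_{p_2}\}$ is special and $M$ is a special line of $\mN$. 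Now fix $p\in Q\setminus L$; the $t'\geq 2$ lines of $Q$ through $p$ disjoint from $L$ are then all special lines of type $CCC$, and by Lemma \ref{lemG:distinct_lines_CCC} they induce pairwise distinct $\mV$-lines, all of which would be special $\mV$-lines of type $CCC$ through $f_p$ --- contradicting the uniqueness asserted in Lemma \ref{lem:ValC}.

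The routine parts (that the distances computed inside the convex subspace $Q$ agree with those in $\mN$, that a lateral line genuinely induces a $\mV$-line with three distinct valuations, and the table bookkeeping) are straightforward. I expect the ``specialness'' step to be the main obstacle: one must check that a line of $Q$ disjoint from $L$ projects \emph{bijectively} onto $L$ --- this is precisely where the generalized quadrangle axiom for $Q$ enters --- and then reconcile this with the combinatorial definition of a special $\mV$-line of type $CCC$ so that Lemmas \ref{lemG:distinct_lines_CCC} and \ref{lem:ValC} become applicable.
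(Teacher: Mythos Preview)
Your proof is correct and, for the type~$C$ case, follows essentially the same line as the paper: both arguments observe that a line of $Q$ disjoint from $L$ projects bijectively onto $L$ (hence is special), obtain at least two such lines through a fixed point of $Q\setminus L$, and invoke Lemmas~\ref{lemG:distinct_lines_CCC} and~\ref{lem:ValC} for the contradiction.

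The type~$B$ case is where you diverge. The paper takes a point $w\in Q\setminus L$ not collinear with a fixed $x\in L$ and shows that the two lines $wz$, $wz'$ (with $z,z'$ neighbors of $x$ in $Q\setminus L$) are distinct $\mV$-lines of type $BBB$ both projecting to $L$, contradicting the bijection of Lemma~\ref{lem:ValB}. You instead stay at a point $\ell\in L$ and count lateral lines of type $ABB$, appealing to the fact that through each point of $\mH$ there is a unique such line. Your route is a shade more direct, avoiding Lemma~\ref{lem:ValB} altogether; the paper's route is more symmetric, treating both cases with the same ``two $TTT$-lines through a common exterior point'' template. One stylistic point: rather than citing a fact ``established in the proof of Lemma~\ref{lemG:BC_exist_and_unique}'', you can argue directly that two distinct $ABB$-lines through $\ell$ would yield (via Lemma~\ref{lemG:BC_exist_and_unique}) two distinct $\mV$-lines of type $ABB$ through $f_\ell$, contrary to Table~\ref{tab2:HJ}.
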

\begin{proof}
For the sake of contradiction assume that all points of $Q \setminus L$ are of a fixed type $T \in \{B, C\}$. 
Let $x$ be a point of $L$.
Since $Q$ is not a grid, there exist two lines $L_1 = \{x, y, z\}$ and $L_2 = \{x, y', z'\}$ through $x$ with $y, y', z, z' \in Q \setminus L$. 
Let $w$ be a common neighbor of $z$ and $z'$ in $Q$ which is different from $x$. Then $w \in Q \setminus L$. 
From Lemma \ref{lemG:BC_exist_and_unique} it follows that the lines $wz$ and $wz'$ correspond to distinct $\mV$-lines, which are of type $TTT$ by our assumption. 
Also note that $\pi(wz) = \pi(wz') = L$. 
This contradicts Lemma \ref{lem:ValB} for $T = B$ and Lemma \ref{lem:ValC} for $T = C$.
\end{proof}

\begin{lem}
\label{lemG:noQ52}
\label{corG:noQ52}
There are no $Q(5,2)$-quads in $\mN$ that meet $\mH$ in a line.
\end{lem}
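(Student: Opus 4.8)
The plan is to argue by contradiction. Suppose $Q$ is a $Q(5,2)$-quad of $\mN$ with $Q \cap \mH = L$ a line. Since $Q(5,2)$ has order $(2,4)$, $Q$ is a convex subspace of diameter $2$ in which every point lies on five lines and is collinear with exactly one point of any line not through it. By Lemma~\ref{lem:QuadBC} there is a point $y$ of type $C$ in $Q \setminus L$, and I will derive a contradiction from the five lines of $Q$ through $y$. First I would record that each point of $Q \setminus L$ is collinear (inside $Q$) with exactly one point of $L$, hence lies at distance $1$ from $\mH$, and so by Corollary~\ref{corG:dist1} has a unique neighbour $\pi(\cdot)$ in $\mH$, which must be that point of $L$; in particular every point of $Q \setminus L$ has type $B$ or $C$, and $Q$ contains no point of type $D$ or $E$. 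One of the five lines of $Q$ through $y$ is $y\,\pi(y)$; any other line $M$ through $y$ misses $L$ (a common point of $M$ and $L$ would be a neighbour of $y$ in $\mH$, hence $\pi(y)$, forcing $M = y\,\pi(y)$), hence misses $\mH$. Two disjoint lines of a generalized quadrangle are parallel at distance $1$, so the collinearity correspondence $M \to L$ is a bijection, and since the point of $\mH$ collinear with $p \in M$ is $\pi(p)$, this bijection is $p \mapsto \pi(p)$; thus $\{\pi(p) : p \in M\} = L$ for each of the four transversal lines $M$ through $y$.

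Next I would pin down the types of these four lines. By Lemma~\ref{lemG:BC_exist_and_unique} distinct points of type $B$ or $C$ induce distinct valuations of $\mH$, so the three valuations along $M$ are pairwise distinct and $M$ induces a genuine $\mV$-line; as all points of $M$ have type $B$ or $C$, the type $BCC$ does not occur (Table~\ref{tab2:HJ}), and $y$ has type $C$, the line $M$ is of type $BBC$ or $CCC$. If $M$ is of type $CCC$, then every point $p$ of $M$ has $\mO_{f_p} = \{\pi(p)\}$ (a type $C$ valuation has a single point of value $0$, namely the projection), so $\bigcup_{p \in M}\mO_{f_p} = L$ is a line of $\HJ$ and the $\mV$-line induced by $M$ is \emph{special}. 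Hence each of the four transversal lines through $y$ induces either a $\mV$-line of type $BBC$ through $f_y$ or a special $\mV$-line of type $CCC$ through $f_y$.

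To conclude, I would use the uniqueness statements. Distinct lines of $\mN$ through $y$ induce distinct $\mV$-lines of type $BBC$ (by Lemma~\ref{lemG:BC_exist_and_unique} again) and distinct $\mV$-lines of type $CCC$ (by Lemma~\ref{lemG:distinct_lines_CCC}); but through the type $C$ valuation $f_y$ there is exactly one $\mV$-line of type $BBC$ (Table~\ref{tab2:HJ}) and exactly one special $\mV$-line of type $CCC$ (Lemma~\ref{lem:ValC}). Thus $y$ lies on at most one line of each kind, contradicting the existence of four transversal lines through $y$ and proving the lemma.

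I expect the heart of the argument to be the forced specialness of the transversal $CCC$-lines: it combines the generalized-quadrangle geometry of $Q$ (two disjoint lines are parallel, so a transversal line projects bijectively onto $L$) with the fact that a type $C$ valuation determines its value-$0$ point as the projection. After that, the collision with Lemmas~\ref{lem:ValC} and~\ref{lemG:distinct_lines_CCC} and Table~\ref{tab2:HJ} is purely formal. A minor care point is checking that every point of $Q \setminus L$ lies at distance exactly $1$ from $\mH$ with projection on $L$, which rests on both the convexity of $Q$ and the collinearity structure of the quad.
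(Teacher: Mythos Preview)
Your proof is correct and follows essentially the same approach as the paper's: pick a type $C$ point in $Q$ via Lemma~\ref{lem:QuadBC}, observe that the four lines through it in $Q$ other than the one meeting $L$ project onto $L$ and hence are of type $BBC$ or special $CCC$, and then use Table~\ref{tab2:HJ}, Lemma~\ref{lem:ValC}, Lemma~\ref{lemG:distinct_lines_CCC}, and Lemma~\ref{lemG:BC_exist_and_unique} to bound the number of such lines by two, contradicting the order $(2,4)$ of $Q(5,2)$. Your write-up is simply more explicit about the intermediate justifications (e.g., why every point of $Q\setminus L$ has projection on $L$, and why the transversal $CCC$-lines must be special), which the paper leaves implicit.
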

\begin{proof}
Let $Q$ be a $Q(5,2)$-quad that meets $\mH$ in a line $L$. By Lemma \ref{lem:QuadBC} there is a point $x$ of type $C$ in $Q$.
There is a unique line through $x$ that intersects $\mH$ in $L$, and hence lies in $Q$. 
Every other line through $x$ which is contained in $Q$ projects to $L$. 
By Lemmas \ref{lem:ValC} and \ref{lemG:distinct_lines_CCC} there is at most one line of type $CCC$ through $x$ in $Q$.
From Table \ref{tab2:HJ} and Lemma \ref{lemG:BC_exist_and_unique} it follows that there is at most one line of type $BBC$ through $x$ in $Q$.
Therefore, in total we have at most three lines through $x$ in $Q$ which contradicts the fact that the order of a $Q(5,2)$-quad is $(2,4)$. 
\end{proof}

\begin{lem}
\label{corG:BBB}
Let $x$ be a point of type $B$ in $\mN$. 
Then $x$ cannot be contained in two lines $L_1$, $L_2$ such that $L_1$ has type $BBB$, $L_2$ has type $BBC$ and $\pi(L_1) = \pi(L_2)$. 
\end{lem}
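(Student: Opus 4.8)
The plan is to argue by contradiction. Suppose $x$ has type $B$ and lies on a line $L_1$ of type $BBB$ and a line $L_2$ of type $BBC$ with $\pi(L_1)=\pi(L_2)=:M$. Neither $L_1$ nor $L_2$ contains a point of type $A$, so neither meets $\mH$, and by Corollary \ref{corG:B} the set $M$ is a line of $\mH$ with each $L_i$ parallel to $M$ at distance $1$. Since $x$ lies on $L_i$ we have $p:=\pi(x)\in M$, and $p$ is the point of $M$ nearest to $x$; hence under the parallel-line bijection $L_i\to M$ the point $x$ corresponds to $p$, so the two points of $L_i\setminus\{x\}$ correspond to the two points of $M\setminus\{p\}$. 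Write $L_1=\{x,u_1,u_2\}$ with $\pi(u_1)=q_1$, $\pi(u_2)=q_2$ and $M=\{p,q_1,q_2\}$, and let $L_x:=xp$ (a line of type $ABB$, as it passes through the type $A$ point $p$ and contains the type $B$ point $x$).

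First I would place $L_x$, $L_1$ and $L_2$ in a common quad. The points $p$ and $u_1$ are at distance $2$ (collinearity would force $\pi(u_1)=p$), and $x$ and $q_1$ are two common neighbours of them; since $L_x=px$ has three points, \cite[Proposition 2.5]{Sh-Ya} gives a unique quad through $p$ and $u_1$. This quad contains the common neighbours $x,q_1$, hence the lines $L_x$, $L_1=xu_1$ and $M=pq_1$, so by the uniqueness of the quad through two concurrent lines \cite[Thm.~1.4]{bdb-book} it equals the unique quad $Q$ through $L_x$ and $M$. Repeating the argument with a point of $L_2\setminus\{x\}$ in place of $u_1$ shows that the quad through $L_x$ and $L_2$ is also $Q$; thus $L_x$, $L_1$ and $L_2$ are three distinct lines of $Q$ through $x$. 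Since $M\subseteq Q\cap\mH$, Lemma \ref{lemG:QuadIntersection} gives $Q\cap\mH=M$; hence $Q$ is not a $Q(5,2)$-quad by Lemma \ref{lemG:noQ52}, and not a grid since it has three lines through $x$, so $Q\cong W(2)$ and $L_x$, $L_1$, $L_2$ are exactly its three lines through $x$.

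Next I would identify the types of the points of $Q$ and count the type $B$ neighbours of $x$. Every point of $Q\setminus M$ lies at distance $1$ from $\mH$ (being collinear, in the generalized quadrangle $Q$, with a point of $M$) and so has type $B$ or $C$. The three points of $M$ have type $A$ and each lies, besides $M$, on exactly two further (``radial'') lines of $Q$; a radial line is not contained in $\mH$, so it induces a $\mV$-line through a type $A$ valuation and is therefore of type $ABB$ or $ACC$ by Table \ref{tab2:HJ}. Since $\mN$ has order $(2,10)$, each point of $\mH$ lies on a unique line of type $ABB$ (proof of Lemma \ref{lemG:BC_exist_and_unique}): through $p$ this is $L_x$, and through $q_i$ it is $u_iq_i$, which lies in $Q$ because $u_i\in L_1\subseteq Q$ and $q_i\in M\subseteq Q$. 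Hence $L_x$, $u_1q_1$, $u_2q_2$ are the only radial lines of type $ABB$, the remaining three radials being of type $ACC$; and since in $W(2)$ each point of $Q\setminus M$ lies on exactly one radial, the twelve points of $Q\setminus M$ split into six of type $B$ — those off $M$ on $L_x$, $u_1q_1$, $u_2q_2$ — and six of type $C$. Writing $z$ for the third point of $L_x$ and $a_i$ for the third point of $u_iq_i$, the six type $B$ points of $Q$ are $x,z,u_1,u_2,a_1,a_2$ (pairwise distinct by Corollary \ref{corG:dist1}). Of these, $z$, $u_1$, $u_2$ are collinear with $x$ and lie on $L_x\cup L_1$, whereas $a_i$ is not collinear with $x$: since $x$ lies off the line $u_iq_i$ and is already collinear with $u_i$ on it, it has no further neighbour on $u_iq_i$ in $Q$. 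Thus every type $B$ point of $Q$ collinear with $x$ lies on $L_x\cup L_1$, so $L_2$ contains no type $B$ point besides $x$ — contradicting the assumption that $L_2$ is of type $BBC$.

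I expect the crux to be the second step — realizing $L_x$, $L_1$ and $L_2$ inside one $W(2)$-quad meeting $\mH$ in $M$, which requires carefully tracking the parallel lines of $\mH$, producing enough common neighbours to apply the quad criterion, and invoking uniqueness of the quad through two concurrent lines. Once $Q$ is in hand, the point-type analysis completing the argument is routine bookkeeping in $W(2)$.
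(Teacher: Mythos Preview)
Your proof is correct, but it takes a substantially heavier route than the paper's.

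The paper's argument is a three-line valuation computation: write $L_1=\{x,y,z\}$ (type $BBC$, with $z$ of type $C$) and $L_2=\{x,y',z'\}$ (type $BBB$) so that $\pi(y)=\pi(y')$. Both $y$ and $y'$ are of type $B$, hence the projection lines $y\pi(y)$ and $y'\pi(y')$ are of type $ABB$ through the same point $\pi(y)=\pi(y')$ of $\mH$. By Table~\ref{tab2:HJ} there is a \emph{unique} $\mV$-line of type $ABB$ through a given type $A$ valuation, so $f_y$ must equal $f_{y'}$ or $f_{y''}$ (with $y''$ the third point of $y'\pi(y')$), contradicting Lemma~\ref{lemG:BC_exist_and_unique}. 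No quads are needed at all.

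Your approach instead builds the $W(2)$-quad $Q$ through $L_x$ and $M$, shows $L_1,L_2\subseteq Q$, classifies the six radial lines of $Q$ as three of type $ABB$ and three of type $ACC$, and then does bookkeeping in $W(2)$ to locate all type $B$ points of $Q$. This is valid and in fact anticipates the quad structure established in the next lemma (Lemma~\ref{lemG:typeB}); but it costs you Lemmas~\ref{lemG:QuadIntersection} and~\ref{lemG:noQ52}, the Shult--Yanushka quad criterion, and a detour through the combinatorics of $W(2)$, where the paper gets by with a single glance at Table~\ref{tab2:HJ}. The key observation you are implicitly re-deriving is that the unique $ABB$ line through $q_i$ already forces $u_i$ and the type $B$ point of $L_2$ projecting to $q_i$ to induce valuations on the same $\mV$-line --- which is exactly the paper's shortcut.
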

\begin{proof}
Let $L_1 = \{x, y, z\}$ and $L_2 = \{x, y', z'\}$ be two such lines, such that $\pi(y) = \pi(y')$ and $\pi(z) = \pi(z')$. 
Say $L_1$ is of type $BBC$ and $L_2$ of type $BBB$.
Without loss of generality assume that $z$ is of type $C$. 
Then the lines $y\pi(y)$ and $y'\pi(y')$ are of type $ABB$. 
By Table \ref{tab2:HJ} there is only one $\mV$-line of type $ABB$ through a valuation of type $A$, and hence $f_y$ is equal to $f_{y'}$ or $f_{y''}$ where $y''$ is the third point (of type $B$) on the line $y'\pi(y')$. 
This contradicts Lemma \ref{lemG:BC_exist_and_unique}.
\end{proof}

\begin{lem}
\label{lemG:typeB}
Let $x$ be a point of type $B$ in $\mN$.
Then
\begin{enumerate}[$(1)$]
\item  $x$ is incident with a unique line of type $ABB$ and ten lines of type $BBC$;
\item these ten type $BBC$ lines through $x$ correspond bijectively to the ten $BBC$ lines of the valuation geometry $\mV$ through $f_x$, and they are partitioned into pairs by five $W(2)$-quads passing through the line of type $ABB$ through $x$. 
\end{enumerate}
\end{lem}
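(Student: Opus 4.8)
\textbf{Proof proposal for Lemma~\ref{lemG:typeB}.}

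The plan is to analyse the eleven lines of $\mN$ through $x$ family by family, using the valuation geometry $\mV$ of $\mH\cong\HJ$ (Tables~\ref{tab1:HJ}--\ref{tab2:HJ}) together with quad theory. First I would observe that $L_x:=x\pi(x)$ is the unique line of type $ABB$ through $x$: a line of type $ABB$ through $x$ carries a point inducing a type-$A$ valuation, which by Lemma~\ref{lemG:basic} lies in $\mH$ and, being collinear with $x$, equals $\pi(x)$ by Corollary~\ref{corG:dist1}; and $L_x$ itself is of type $ABB$, since by Table~\ref{tab2:HJ} any $\mV$-line carrying both a type-$A$ and a type-$B$ valuation is of type $ABB$. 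As $L_x$ is the only line through $x$ meeting $\mH$, every other line through $x$ is disjoint from $\mH$ and hence of type $BBB$ or $BBC$ by Table~\ref{tab2:HJ}; moreover, since points at distance $1$ from $\mH$ have type $B$ or $C$ and such valuations are induced exactly once (Lemmas~\ref{lemG:basic} and~\ref{lemG:BC_exist_and_unique}), distinct lines through $x$ disjoint from $\mH$ induce distinct $\mV$-lines through $f_x$. By Corollary~\ref{corG:B}, each such line $L'$ is parallel at distance $1$ to a unique line $\pi(L')$ of $\mH$ through $\pi(x)$, and $\mH$ has exactly five lines through $\pi(x)$.

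Next I would group these ten lines by their projection and run a quad argument. Fix a line $M$ of $\mH$ through $\pi(x)$ that occurs as $\pi(L')$ for some $L'$. Applying the Shult--Yanushka existence criterion to $x$ and a point $q\in M\setminus\{\pi(x)\}$ — which lie at distance $2$, with common neighbours $\pi(x)$ and the unique point of $L'$ at distance $1$ from $q$ — produces a quad $Q$ containing $L_x$, $L'$ and $M$; by Lemma~\ref{lemG:QuadIntersection} $Q\cap\mH=M$, and as $Q$ contains the two intersecting lines $L_x$ and $M$ it is the unique quad $Q_M$ through them. Hence every line through $x$ with projection $M$ lies in $Q_M$. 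Since $Q(5,2)$-quads meeting $\mH$ in a line are excluded by Lemma~\ref{corG:noQ52}, $Q_M$ is a grid or a $W(2)$-quad, and the lines through $x$ with projection $M$ are precisely the lines of $Q_M$ through $x$ other than $L_x$: such a line, as a line of a generalized quadrangle not through $\pi(x)$, is disjoint from $M$, hence parallel to $M$, hence has projection $M$. Thus their number is $1$ (grid) or $2$ ($W(2)$). As these numbers sum to $10$ over the at most five occurring projections, each equals $2$, all five lines of $\mH$ through $\pi(x)$ occur, and every $Q_{M_i}$ is a $W(2)$-quad; this already yields the five $W(2)$-quads through $L_x$ partitioning the ten lines into pairs, as in $(2)$.

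It remains to identify the type of these ten lines. Let $Q_M$ be one of the five $W(2)$-quads and $L',L''$ the two lines of $\mN$ through $x$ inside $Q_M$ other than $L_x$; both project to $M$. By Lemma~\ref{corG:BBB} they cannot be one of type $BBB$ and the other of type $BBC$, so they share a type. Suppose both are of type $BBB$; writing $L'=\{x,y,z\}$, the points $x,y,z$ project onto the three points of $M$ ($L'$ being parallel to $M$ inside $Q_M$), so the type-$BBB$ $\mV$-line induced by $L'$ has union of $\mO$-sets equal to $\mO_{f_x}\cup\mO_{f_y}\cup\mO_{f_z}=\{\pi(x),\pi(y),\pi(z)\}=M$, and likewise for $L''$. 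But $L'$ and $L''$ induce distinct $\mV$-lines, contradicting the injectivity of the map in Lemma~\ref{lem:ValB}, by which at most one type-$BBB$ $\mV$-line through $f_x$ has union of $\mO$-sets equal to $M$. Hence $L'$ and $L''$ are of type $BBC$, so all ten lines through $x$ other than $L_x$ are of type $BBC$; together with the first paragraph this proves $(1)$. Finally, these ten $BBC$ lines induce ten distinct $\mV$-lines of type $BBC$ through $f_x$, and by Table~\ref{tab2:HJ} there are exactly ten such $\mV$-lines, so the correspondence is a bijection, completing $(2)$.

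I expect the crux to be the realisation that one should not attack ``no line of type $BBB$ through $x$'' directly: the decisive step is the elementary count in the second paragraph, where the constraints ``class size $\in\{1,2\}$, total $10$, at most five classes'' simultaneously force all five projections to occur, exclude grid-quads through $L_x$, and deliver the five $W(2)$-quads; the types are then pinned down locally inside each $W(2)$-quad via Lemmas~\ref{corG:BBB} and~\ref{lem:ValB}. The one technical point requiring care is the generalized-quadrangle fact that a line of $Q_M$ through $x$ not passing through $\pi(x)$ is parallel to $M$ — this is what makes the ``$1$ or $2$'' dichotomy exact and thereby drives the count.
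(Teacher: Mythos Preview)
Your proof is correct and takes essentially the same approach as the paper: both project the ten lines through $x$ disjoint from $\mH$ onto the five lines of $\mH$ through $\pi(x)$ via Corollary~\ref{corG:B}, use quad existence together with Lemma~\ref{corG:noQ52} to cap each fibre at two, and then invoke Lemmas~\ref{corG:BBB} and~\ref{lem:ValB} to exclude type~$BBB$. The only difference is packaging --- the paper parametrises by $i=\#\{\text{type-}BBB\text{ lines}\}$ and forces $i=0$ from the inequality $11\le 2(5-i)+i+1$, whereas you first pin down the five $W(2)$-quads by the count ``fibres of size $1$ or $2$ summing to $10$ over at most five fibres'' and then rule out $BBB$ locally inside each quad; the underlying ingredients are identical, and your version has the virtue of making the use of Lemma~\ref{lem:ValB} explicit.
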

\begin{proof}
There is a unique line through $x$ that intersects $\mH$, namely the line joining $x$ and $\pi(x)$.
Every other line through $x$ is of type $BBB$ or $BBC$ which is entirely contained in $\Gamma_1(\mH)$ and is parallel to a line through $\pi(x)$ in $\mH$ (see Corollary \ref{corG:B}). 
Let $S$ denote the set of these other lines through $x$.
By Lemma \ref{lemG:BC_exist_and_unique}, distinct lines in $S$ correspond to distinct $\mV$-lines. Let there be $i$ lines of type $BBB$ in $S$, with $i \leq 5$ by Table \ref{tab2:HJ}. 
Since we cannot have two lines of type $BBC$ and $BBB$ in $S$ projecting to the same line of $\mH$ by Lemma \ref{corG:BBB} and since there are no $Q(5,2)$-quads by Lemma \ref{lemG:noQ52}, there are at most $2(5-i)$ lines of type $BBC$ in $S$, and hence in total at most $2(5 - i) + i + 1 = 11 - i$ lines through $x$. 
Therefore, we have $i = 0$ and each of the $5$ lines of $\mH$ through $\pi(x)$  is parallel to exactly $2$ lines of $S$.
This gives rise to $5$ $W(2)$-quads through the line $x\pi(x)$, that partition $S$ into pairs. 
\end{proof}

We are now ready to prove Theorem \ref{thm:G24}. From Lemma \ref{lemG:typeB} it follows that there are no lines of type $BBB$ in $\mN$. 
Since each of the type $A$, $B$ and $C$ valuations is induced by a unique point of $\mN$ and each $\mV$-line of type $AAA$, $ABB$ and $ACC$ is induced by a unique line of $\mN$, it suffices to show that also every $\mV$-line of type $BBC$ and $CCC$ is induced by a unique line of $\mN$, and that type $D$ points do not exist in $\mN$ (we have already proved in Lemma \ref{lemG:noE} that type $E$ points do not exist). 

Let $\{ f, g, h \}$ be a $\mV$-line of type $BBC$ where $f$ is of type $B$. Let $x$ be the unique point in $\mN$ with $f_x = f$. By Lemma \ref{lemG:typeB}, there exists a line $L = \{x, y, z\}$ such that $f_y = g$ and $f_z = h$. This shows that each $\mV$-line of type $BBC$ is induced by a necessarily unique line of $\mN$. 

Now, let $x$ be a point of type $C$. Since $\Gamma_2$ is a $1$-cover of $\Gamma_1$, there exist eight ordinary lines of type $CCC$ through $x$ that bijectively correspond to the eight ordinary $\mV$-lines of type $CCC$ through $f_x$. By Table \ref{tab2:HJ}, there exists a unique $\mV$-line of type $BBC$ through $f_x$, implying that in $\mN$ there is a unique line $L$ of type $BBC$ through $x$. By Lemma \ref{lemG:typeB} $L$ lies in a $W(2)$-quad $Q$, which must also contain the unique line of type $ACC$ through $x$. The third line in $Q$ through $x$ must be a special line of type $CCC$ as there is a unique type $BBC$ line through $x$ and none of the ordinary type $CCC$ lines through $x$ projects to a line of $\mH$. Therefore, the unique special $\mV$-line of type CCC through $f_x$ is induced by a line of $\mN$. Since we have accounted for all $11$ lines through a point of type $C$, there cannot be any lines of type $CDD$, and hence there cannot be any points of type $D$ in $\mN$. This completes the proof as we have shown that $\mN$ is isomorphic to the $G_2(4)$ near octagon.

\end{document}